\theoremstyle{theorem}
\newtheorem{thm}{Theorem}[section]
\theoremstyle{corollary}
\newtheorem{corollary}{Corollary}[section]
\theoremstyle{lemma}
\newtheorem{lemma}{Lemma}[section]
\theoremstyle{definition}
\theoremstyle{proposition}
\newtheorem{proposition}{Proposition}[section]
\theoremstyle{proof}
\theoremstyle{remark}
\def\G{\mathcal{G}}    
\newcommand{\bel}[1]{\begin{equation}\label{#1}}
\newcommand{\be}{\begin{equation}}
\newcommand{\ba}{\begin{eqnarray}}
\newcommand{\ea}{\end{eqnarray}}
\newcommand{\qe}{\end{equation}}
\begin{document}
\title{On the spectrum of hypergraphs}

\author{\rm Anirban Banerjee}

\affil{Department of Mathematics and Statistics}
\affil[ ]{Indian Institute of Science Education and Research Kolkata}
\affil[ ]{Mohanpur-741246,  India}
\affil[ ]{\textit {{\scriptsize anirban.banerjee@iiserkol.ac.in}}}

\maketitle

\begin{abstract}
Here we study the spectral properties of an underlying weighted graph of a non-uniform hypergraph by introducing different connectivity matrices, such as adjacency, Laplacian and normalized Laplacian matrices.
We  show that  different structural properties of a  hypergrpah,
 can  be well studied using spectral properties of these matrices.  
Connectivity of a hypergraph is also investigated  by the eigenvalues of these operators. 
Spectral radii of the same are bounded by the degrees of a  hypergraph. The diameter of a  hypergraph is also bounded
 by the eigenvalues of its connectivity matrices. 
 We characterize  different properties of a regular  hypergraph  characterized by the spectrum. 
 Strong (vertex) chromatic number of a hypergraph is bounded by the eigenvalues. 
Cheeger constant on a hypergraph is defined and we show that it can be bounded  by the smallest nontrivial eigenvalues of Laplacian matrix and normalized Laplacian matrix, respectively, of a connected hypergraph. 
We also show an approach to study random walk on a (non-uniform) hypergraph that can be performed by analyzing the spectrum of transition probability operator which is defined on that hypergraph. Ricci curvature on hypergraphs is introduced in two different ways. We show that if the Laplace operator, $\Delta$, on a hypergraph satisfies a curvature-dimension type inequality $CD (\mathbf{m}, \mathbf{K})$ with $\mathbf{m}>1 $ and  $\mathbf{K}>0 $ then any non-zero eigenvalue of $- \Delta$ can be bounded below by 	$ \frac{ \mathbf{m} \mathbf{K}}{ \mathbf{m} -1  } $. 
Eigenvalues of a normalized Laplacian operator defined on a connected hypergraph can be bounded by the Ollivier's Ricci curvature of the hypergraph.

\end{abstract}

\textbf{AMS classification}:  05C15; 05C40; 05C50; 05C65; 15A18; 47A75\\
\textbf{Keywords}: Hypergraph; Spectral theory of hypergraphs; Cheeger constant; Random walk on hypergraphs; Ricci curvature of hypergraphs.
\section{Introduction}

	In spectral graph theory, eigenvalues of an operator or a matrix, defined on a graph,  are investigated and different properties of the graph structure are explored from these eigenvalues. Adjacency matrix, Laplacian matrix, normalized Laplacian matrix are the popular matrices to study in spectral graph theory \cite{Brouwer2011, chung1997, Cvetkovic2009}. Depending on the graph structure, various bounds on eigenvalues have been estimated. Different relations of graph spectrum with its diameter, coloring, and connectivity have been established. Eigenvalues also play an important role to characterize graph connectivity by retraining edge boundary, vertex boundary, isoperimetric number, Cheeger constant, etc. 
	Isoperimetric problems  deal with optimal relations between the size of a cut and the size of  separated parts. 
	Similarly, Cheeger constant shows how difficult it is to cut the Riemannian manifold into two large pieces \cite{Cheeger1970}. The concept of Cheeger constant in spectral geometry has been incorporated in a very similar way in spectral graph theory. The Cheeger constant of a graph can be bounded above and below by the smallest nontrivial eigenvalue of the Laplacian matrix and normalized Laplacian matrix, respectively, of the graph \cite{chung1997, mohar1989}. 
	Ricci curvature on a graph \cite{Bauer2017,Jost2014,Lin2010} has been introduced  which is analogous to the notion of Ricci curvature in Riemannian geometry \cite{Bakry1985, Ollivier2009}. Many results have been proved on manifolds with Ricci curvature bounded below. Lower Ricci curvature bounds have been derived in the context of finite connected graphs. 
	Random walk on  graphs is also studied by defining transition probability operator on the same \cite{Grigoryan2011}. Eigenvalues of the transition probability operator can be estimated from the spectrum of normalized graph Laplacian \cite{Banerjee2008}.

	 Unlike in a graph, an edge of a hypergraph can be formed  with more than two vertices.  Thus an edge  of a hypergraph is the nonempty subset  of the vertex set of that hypergraph \cite{Voloshin}. Different aspects of a hypergraph  like, Helly property, fractional transversal number, connectivity, chromatic number have been well studied \cite{Berge1984, Voloshin2002}. 
	 A hypergraph is used to be represented by an incidence graph which is a bipartite graph with vertex classes, the vertex set and the edge set of the hypergraph and it has been exploited to study Eulerian property, the existence of different cycles, vertex and edge coloring in hypergraphs. 

	A hypergraph can also be represented by a hypermatrix which is a multidimensional array. A recent trend has been developed to explore spectral hypergraph theory using different connectivity hypermatrices. An $m$-uniform hypergraph on $n$ vertices, where each edge
	contains the same, $m$, number of vertices can  easily be
	represented by a  hypermatrix of order $m$ and dimension $n$. 
	In 2005 \cite{Qi2005}, L.~Qi  and independently L.H.~Lim \cite{Lim2005}  introduced the concept of eigenvalues of a real supersymmetric tensor (hypermatrix). Then  spectral theory for tensors started to develop. 
%
	Afterward,  many researchers  analyzed different  eigenvalues of several connectivity tensors (or hypermatrices), namely, adjacency tensor, Laplacian tensor, normalized Laplacian tensors, etc.
	 Various properties of eigenvalues of a tensor have been studied in \cite{Chang2008, Chang2009, Li2013, Ng2009, Shao2013, shaoshan2013,  Yang2010, YangYang20101}. Using characteristic polynomial, the spectrum of the adjacency matrix of a graph is extended for uniform hypergraphs in \cite{Cooper2012}. Different properties of eigenvalues of Laplacian and signless Laplacian tensors of a uniform hypergraph have been studied in  \cite{Hu2013_2, Hu2014, Hu2015, Qi2014, Qi2014_2}.  Recently  Banerjee et al. defined different hypermatrices for non-uniform hypergraphs and studied their spectral properties \cite{Banerjee2017}. 
	 Many, but not all properties of spectral graph theory could be extended to spectral hypergraph theory.
	We also refer to \cite{Qi2017}  for detailed reading on spectral analysis of hypergraphs using different tensors. 
	One of the disadvantages to study spectral hypergraph theory by tensors (hypermatrices) is the computational complexity  to compute the eigenvalues, which is NP-Hard.

On the other hand, a simple approach for studying a hypergraph is to  represent it by an underlying graph, i.e., by a 2-graph. There are many approaches, such as weighted clique expansion \cite{Rodriguez2002, Rodriguez2003, Rodriguez2009}, clique averaging \cite{AgarwalEtAl2005}, star expansion  \cite{ZienEtAl1999}.

In this article, we study the spectral properties of the underlying weighted graph of a non-uniform hypergraph by introducing different  linear operators (connectivity matrices). This underlying graph corresponding to a uniform hypergraph is similar as studied by  Rodr\'iguez, but the weights of the edges are different. We show that spectrum of these matrices (or operators) can reveal many structural properties of hypergraphs.
 Connectivity of a hypergraph is also studied   by the eigenvalues of these operators.
	 Spectral radii of the same have been bounded by the degrees of a hypergraph. We  also bound the diameter of a hypergraph by the eigenvalues of its connectivity matrices. Different properties of a regular hypergraph are characterized by the spectrum. Strong (vertex) chromatic number of a hypergraph is bounded by the eigenvalues. 
	 We   define Cheeger constant on a hypergraph and show that it can be bounded above and below by the smallest nontrivial eigenvalues of Laplacian matrix and normalized Laplacian matrix, respectively, of a connected hypergraph. 
	 We also show an approach to study random walk on a hypergraph that can be performed by analyzing the spectrum of  transition probability operator which is defined on that hypergraph.
	 Ricci curvature on hypergraphs is introduced in two different ways. We  show that if the Laplace operator, $\Delta$, on a hypergraph satisfies a curvature-dimension type inequality $CD (\mathbf{m}, \mathbf{K})$ with $\mathbf{m}>1 $ and  $\mathbf{K}>0 $ then any non-zero eigenvalue of $- \Delta$ can be bounded below by 	
	 $ \frac{ \mathbf{m} \mathbf{K}   }{ \mathbf{m} -1  } $. 
	The spectrum of normalized Laplacian operator on a connected hypergraph is also bounded by the Ollivier’s Ricci curvature of the hypergraph.
\\

\noindent
Now we recall some definitions related to hypergraphs.
	A \textit{hypergraph} $\mathcal{G}$ is a pair $\mathcal{G}= (V, E)$ where $V$ is a set of elements called vertices,  and $E$ is a set of non-empty subsets of $V$ called  edges. If all the edges of  $\G$ have the same carnality, $m$, then $\G$ is called an \textit{$m$-uniform hypergraph}.
	The \textit{rank}  $r(\G)$ and \textit{co-rank} $cr(\G)$ of a hypergraph $\G$ are the maximum and minimum, respectively, of the cardinalities of the edges in $\G$. 
 Two  vertices $i,j\in V$ are called adjacent if they  belong to an edge together, i.e., $i,j\in e$ for some $e\in E$ and it is denoted by $i \sim j$.

Let $\mathcal{G}(V,E)$ be an $m$-uniform hypergraph with $n$ vertices and  let $K^m_n$ be the complete $m$-uniform hypergraph with $n$ vertices. Further let    $\bar{\mathcal{G}}(V, \bar{E})$ be the ($m$-uniform) \textit{complement of $\mathcal{G}$} which is also an $m$-uniform hypergraph such that an edge $e\in \bar{E}$ if and only if $ e \notin E$. Thus the edge set of  $K^m_n$ is $E\cup \bar{E}$. A   hypergraph $\mathcal{G}(V, E)$ is called \textit{bipartite} if $V$ can be partitioned into two disjoint subsets $V_1$ and $V_2$ such that for each edge $e\in E$, $e\cap V_1 \ne \emptyset$ and $e\cap V_2 \ne \emptyset$.  An  $m$-uniform complete bipartite hypergraph  is denoted by $K^m_{n_1,n_2}$, where $|V_1|=n_1$ and $|V_2|=n_2$.

 	 The \textit{Cartesian product},  $\G_1 \times \G_2$,  of two hypergraphs $\G_1 = (V_1, E_1)$ and $\G_2 = (V_2, E_2)$ is defined by the vertex set $V(\G_1 \times \G_2)=V_1 \times V_2$ and the edge set  $E(\G_1 \times \G_2)= \big\{ \{a\} \times e : a \in V_1,  e \in E_2\big\}
 	\bigcup \big\{ e \times \{x\} : e \in E_1,  x \in V_2\big\}.$
Thus, the vertices $(a,x), (b,y) \in V(\G_1 \times \G_2)$ are adjacent,$(a,x)\sim (b,y)$, if and only if, either $a=b$ and $x\sim y$ in $\G_2$, or $a\sim b$ in $\G_1$ and $x=y$. Clearly, if $\G_1 $ and $ \G_2$ are two $m$-uniform hypergraphs with $n_1$ and $n_2$ vertices, respectively, then $\G_1 \times \G_2$ is also an $m$-uniform hypergraph with $n_1 n_2$ vertices.
 
For a  set $S\subset V$ of a hypergraph $\G (V,E)$, the \textit{edge boundary} $\partial S = \partial_\G S$ is the set of edges in $\G$ with vertices in both $S$ and $V\setminus S$, i.e., $\partial S = \{e \in E : i,j\in e, i\in S \text{ and } j\in   V\setminus S \}$. Similarly  the \textit{vertex boundary} $\delta S$ for $S$ to be the set of all vertices in $V\setminus S$  adjacent to some vertex in $S$, i.e., $\delta S = \{i \in V\setminus S : i,j \in e \in E, j\in S   \}$.
The \textit{Cheeger constant} (isoperimetric number) $h(\G)$ of a hypergraph $\G (V,E)$ is defined as
$$h(\G) := \inf_{\phi \ne S \subset V} \bigg\{ \frac{|\partial S|}{\min (\mu(S), \mu(V\setminus S))} \bigg\},$$ where $\mu$ is a measure on subsets of vertices. 
 Note that, depending on the choice of measure $\mu$ we use different tools. For example, if we consider equal weights $1$ for all  vertices in a subset $S$ then $\mu(S)$ becomes the number of vertices in $S$, i.e., $|S|$
 and combinatorial Laplacian is a better tool to use here. On the other hand,  if we choose the weight of a vertex  equal to its degree, then $\mu(S) = \sum_{i\in S} d_i$ and normalized Laplacian will be a better choice in this case. Moreover, sometimes for a weighted graph, we take $\mu ' (\partial S) = \sum_{e \equiv (i,j) \in \partial S} w_{ij}$ instead of $|\partial S|$ in the numerator of $h(\G)$, where $\mu ' (\partial S)$ is a measure on the  set of edges, $\partial S$, and  $w_{ij}$ is  weight of an edge $(i,j)$.


In this article we always consider finite non-uniform hypergraph $\G (V,E)$, i.e., $|V| < \infty$, if not mentioned otherwise.

\section{Adjacency matrix of hypergraphs}
The adjacency matrix $A_\mathcal{G} = [(A_\mathcal{G})_{ij}]$ of a hypergraph $\mathcal{G}=(V, E)$ is defined as 
$$(A_\mathcal{G})_{ij} = \sum_{\substack{e  \in E, \\ i,j\in e} } \frac{1}{|e|-1}.$$  For an  $m$-uniform  hypergraph the adjacency matrix becomes $(A_\mathcal{G})_{ij} = d_{ij}\frac{1}{m-1},$
 where $d_{ij}$ is the \textit{codegree} of vertices $i$ and $j$. The codegree $d_{ij}$ of vertices $i$ and $j$ is the number of edges (in $\mathcal{G}$) that contain the vertices $i$ and $j$ both, i.e., $d_{ij}= |\{ e\in E:  i,j \in e \}|$.  The above definition of adjacency matrix for a uniform  hypergraph  is similar, but not the same, as defined in \cite{Rodriguez2002}.  Now, the degree, $d_i$, of a vertex $i\in V$ which is the number of edges that contain $i$  can be expressed as $ d_i  =      \sum_{j = 1}^{n} (A_\mathcal{G})_{ij}.$ Now we explore the operator form of the adjacency matrix defined above.
Let $\mathcal{G}(V,E)$ be a hypergraph on $n$ vertices. Let us consider a real-valued function $f$ on $\mathcal{G}$, i.e., on the vertices of $\mathcal{G}$, $f: V \rightarrow \mathbb{R}$. The set of such functions forms a vector space (or a real inner product space) which is isomorphic to $\mathbb{R}^n$. For such two functions $f_1$ and $f_2$  on $\mathcal{G}$ we take their inner product 
$\langle f_1,f_2 \rangle = \sum_{i\in V} f_1(i)f_2(i).$ This  inner product space is also  isomorphic to $\mathbb{R}^n$.
Let us choose a basis $\mathcal{B} =\{g_1,g_2,\dots,g_n \}$ such that $g_i(j)=\delta_{ij}$. Now we find the  adjacency operator $T$ such that $[T]_\mathcal{B} = A_\G$. Here, we also denote $T$ by $A_\G$.
Now, our  adjacency operator $A_\mathcal{G}$ (which is a linear operator) is defined as
$$ (A_\mathcal{G} f)(i) = \sum_{j, i\sim j} \sum_{\substack{e  \in E, \\ i,j\in e} } \frac{1}{|e|-1}  f(j).$$ 
It is easy to verify that
$\langle A_\mathcal{G} f_1,f_2 \rangle = \langle  f_1, A_\mathcal{G} f_2 \rangle
,$ for all $f_1, f_2 \in \mathbb{R}^n$, i.e., the operator $A_\mathcal{G}$ is symmetric w.r.t.~$\langle .,. \rangle$. So the eigenvalues of $A_\mathcal{G}$ are real. 
Now onwards we shall use the operator and the matrix form of $A_\mathcal{G}$ interchangeably.

 Clearly, for a connected hypergraph $\mathcal{G}$ the  adjacency matrix $A_\mathcal{G}$, which is real and non-negative  (i.e., all of its entries are a non-negative real number), possesses a Perron eigenvalue with positive real eigenvector. Moreover, for an undirected hypergraph $A_\mathcal{G}$ is symmetric.
The hypergraph $\mathcal{G}$, the corresponding weighted graph ${G}[A_\mathcal{G}]$ (constructed from the  adjacency matrix $A_\mathcal{G}$ of $\mathcal{G}$) and the graph ${G}_0[A_\mathcal{G}]$ have the similar property regarding graph connectivity and coloring. Here ${G}_0[A_\mathcal{G}]$ is the underlying unweighted graph of ${G}[A_\mathcal{G}]$. So, 
   $\mathcal{G}$ is connected if and only if ${G}[A_\mathcal{G}]$ is connected. Thus it is easy to show that a hypergraph $\mathcal{G}$ is connected if and only if the highest eigenvalue of $A_\mathcal{G}$ is simple and possesses a positive eigenvector (see Cor.~1.3.8, \cite{Cvetkovic2009}).
  We can also estimate the upper bound for  spectral radius $\rho(A_\mathcal{G}) $ for a connected hypergraph $\mathcal{G}(V,E)$ as
  $\rho(A_\mathcal{G}) \le  \max_{\substack{i,j,\\ i \sim j}}\{\sqrt{d_id_j}\}.$
  This equality holds if and only if $\mathcal{G}$ is a regular hypergraph (see Cor 2.5,\cite{Das2008}).

\begin{thm}\label{Adj:Thm:GraphCrossProduct}
Let $\G_1 =(V_1, E_1)$and $ \G_2= (V_2, E_2)$ be two $m$-uniform hypergraphs on $n_1$ and $n_2$ vertices, respectively. If $\lambda$ and $\mu$ are eigenvalues of $A_{\G_1} $and $A_{\G_2}$, respectively, then $\lambda + \mu$ is an eigenvalue of $A_{\G_1 \times \G_2} $.
\end{thm}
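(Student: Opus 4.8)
The plan is to produce an explicit eigenfunction of $A_{\G_1 \times \G_2}$ as a ``tensor product'' of eigenfunctions of the two factors. Let $f \colon V_1 \to \mathbb{R}$ satisfy $A_{\G_1} f = \lambda f$ and let $g \colon V_2 \to \mathbb{R}$ satisfy $A_{\G_2} g = \mu g$; since $\lambda$ and $\mu$ are eigenvalues, such nonzero $f$ and $g$ exist. Define $F \colon V_1 \times V_2 \to \mathbb{R}$ by $F(a,x) = f(a) g(x)$. As $f$ and $g$ are not identically zero, neither is $F$. I would then show directly that $A_{\G_1 \times \G_2} F = (\lambda + \mu) F$, which immediately yields the claim.

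The main step is to evaluate $(A_{\G_1 \times \G_2} F)(a,x)$ using the operator form of the adjacency matrix. By the definition of the Cartesian product, every vertex $(b,y)$ adjacent to $(a,x)$ falls into exactly one of two groups: those with $b = a$ and $y \sim x$ in $\G_2$, and those with $y = x$ and $b \sim a$ in $\G_1$. I would split the defining sum accordingly. The crucial preliminary fact is that the edge weights match across the product: since both factors are $m$-uniform, every edge of $\G_1 \times \G_2$ (being of the form $\{a\} \times e$ with $e \in E_2$, or $e \times \{x\}$ with $e \in E_1$) again has cardinality $m$, so each contributes the same factor $\tfrac{1}{m-1}$. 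Consequently $(A_{\G_1 \times \G_2})_{(a,x),(a,y)} = (A_{\G_2})_{xy}$ whenever $x \sim y$, and $(A_{\G_1 \times \G_2})_{(a,x),(b,x)} = (A_{\G_1})_{ab}$ whenever $a \sim b$. Substituting these, the first group of terms collapses to $f(a)\,(A_{\G_2} g)(x) = \mu\, f(a) g(x)$ and the second to $g(x)\,(A_{\G_1} f)(a) = \lambda\, f(a) g(x)$, so that $(A_{\G_1 \times \G_2} F)(a,x) = (\lambda + \mu) F(a,x)$ for every $(a,x)$.

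I expect the only real obstacle to be the bookkeeping behind the entry identity for $A_{\G_1 \times \G_2}$, i.e.\ verifying that the adjacency operator of the product is exactly the Kronecker sum $A_{\G_1} \otimes I_{n_2} + I_{n_1} \otimes A_{\G_2}$. Two points deserve care. First, one must check that no product edge simultaneously joins two vertices differing in both coordinates; indeed an edge $\{a\} \times e$ forces equal first coordinates and an edge $e \times \{x\}$ forces equal second coordinates, so the ``cross'' terms (with $a \ne b$ and $x \ne y$) vanish and the two groups above are genuinely disjoint and exhaustive. Second, one must confirm that the weight $\tfrac{1}{|e|-1}$ transfers unchanged from a factor to the product, which is exactly where $m$-uniformity is used and which guarantees that the fiber-wise restrictions of $A_{\G_1 \times \G_2}$ reproduce $A_{\G_1}$ and $A_{\G_2}$ verbatim. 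Once this identity is in hand, the eigenvalue computation is routine, and $F = f \otimes g$ furnishes the desired eigenvector for $\lambda + \mu$.
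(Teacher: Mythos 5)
Your proposal is correct and follows essentially the same route as the paper: the paper also takes the product eigenvector $\gamma(a,x)=\alpha(a)\beta(x)$ and splits the neighbor sum of $(a,x)$ into the vertices $(b,x)$ with $a\sim b$ and $(a,y)$ with $x\sim y$, using $m$-uniformity so that every product edge contributes the factor $\frac{1}{m-1}$. Your additional remarks on the disjointness of the two neighbor groups and the Kronecker-sum identity are just a more explicit account of the same bookkeeping the paper leaves implicit.
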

\begin{proof}
	Let $a,b \in V_1$ and $x, y\in V_2$ be any vertices of $\G_1$ and $\G_2$, respectively. 
Let $\alpha$ and $\beta$ be the eigenvectors corresponding to the eigenvalues $\lambda$ and $\mu$, respectively.
Let $\gamma \in \mathbb{C}^{n_1n_2}$ be a vector  with the entries $\gamma(a,x) = \alpha(a) \beta(x)$, where $(a,x) \in [n_1] \times [n_2]$. Now we show that $\gamma$ is an eigenvector of $A_{\G_1 \times \G_2}$ corresponding to the eigenvalue $\lambda + \mu$. Thus,
\begin{eqnarray*}
	 \sum_{(b,y), (a, x)\sim (b,y)}  \big(A_{\G_1 \times \G_2}\big)_{(a,x), (b,y)} \gamma (b,y) 
&	= & \sum_{(b,y), (a, x)\sim (b,y)} \frac{d_{(a,x) (b,y)}}{m-1} \gamma (b,y) \\
	{ } & = & 	\sum_{(b,x),  (a, x)\sim (b,x)} \frac{d_{ab} }{m-1} \alpha(b) \beta(x)  +	\sum_{(a,y), (a, x)\sim (a,y)} \frac{ d_{xy}}{m-1} \alpha(a) \beta(y)\\
	{ } & = & 	\beta(x) \lambda \alpha(a)   + \alpha(a)\mu \beta(x)	\\
	{ } & = &  (\lambda + \mu) \gamma(a,x).
\end{eqnarray*}	
Hence the proof follows.
\end{proof}
The $n$-dimensional $m$-uniform \textit{cube hypergraph} $Q(n,m)$ consists of the vertex set $V=\{0,1,2,\dots,m-1\}^{n}$ 
and the edge set $E=\{\{x_1,x_2,\dots,x_m\}: x_i \in V \text{ and } x_i,x_j \text{\;differs exactly in one coordinate when $i\neq{j}$}\}$ \cite{BC1996}.
Note that $Q(1,m)=K^{m}_{m}$ and $Q(1,m)\times{Q(1,m)}=Q(2,m)$. In general $$Q(n,m)=\overbrace{Q(1,m)\times{Q(1,m)\times\dots\times{Q(1,m)}}}^{n}.$$
Since the  eigenvalues of $A_{K^{m}_{m}}$ are $\{1,\overbrace{\dfrac{-1}{m-1},\dfrac{-1}{m-1},\dots,\dfrac{-1}{m-1}}^{m-2}\}$, using the above theorem we can easily  find the  eigenvalues of $A_{Q(n,m)}$.

\subsection{Diameter of a hypergraph and eigenvalues of adjacency matrix} 

A \textit{path} $v_0-v_1$ of length $l$ between two vertices $v_0,v_1\in V$ in a hypergraph $\mathcal{G}(V,E)$ is an alternating sequence $v_0e_1v_1e_2v_2\dots v_{l-1}e_lv_l$ of distinct vertices $v_0,v_1,v_2,\dots, v_l$ and distinct edges $e_1,e_2,\dots, e_l$, such that, $v_{i-1},v_i \in e_i$ for $i=1,\dots,l$.
The  \textit{distance}, $d(i,j)$,  between two vertices $i,j$ in a hypergraph $\mathcal{G}$ is the  minimum length of a $i-j$ path.  
The \textit{diameter}, $diam(\mathcal{G})$, of a hypergraph $\mathcal{G}(V,E)$ is the maximum distance between any pair of vertices in $\mathcal{G}$, i.e.,
$diam(\mathcal{G}) = \max \{d(i,j): i,j\in V \}.$ Now it is easy to show that the diameter of a hypergraph $\mathcal{G}$ is less than the number of distinct eigenvalues of $A_\mathcal{G}$.

\begin{thm}\label{Adj:diam:2ndEig} 
 L et $\mathcal{G}(V,E)$ be a connected hypergraph with $n$ vertices and minimum edge carnality 3. Let $\theta$  be the second largest eigenvalue (in absolute value) of $A_\mathcal{G}$. Then
 $$ diam(\mathcal{G}) \le \bigg\lfloor 1 + \frac{\log((1-\alpha^2)/\alpha^2)}{\log(\lambda_{max}/\theta)} \bigg\rfloor,$$ where $\lambda_{max}$ is the largest eigenvalue of  $A_\mathcal{G}$ with the unit eigenvector $X_1=((X_1)_1, (X_1)_2,\dots, (X_1)_n)^t$ and $\alpha= \min_i \{(X_1)_i$\}.
\end{thm}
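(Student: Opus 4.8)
The plan is to run the classical spectral diameter argument adapted to the symmetric operator $A_\mathcal{G}$, exploiting that powers of $A_\mathcal{G}$ count weighted walks, exactly as in the graph case. First I would record the two structural inputs that connectedness supplies through the Perron--Frobenius theory already invoked in this section: $A_\mathcal{G}$ has a simple largest eigenvalue $\lambda_{max}$ with strictly positive unit eigenvector $X_1$, and every other eigenvalue is strictly smaller in absolute value, so that $\theta < \lambda_{max}$ and hence $\log(\lambda_{max}/\theta) > 0$. Fix an orthonormal eigenbasis $X_1, X_2, \dots, X_n$ with eigenvalues $\lambda_1 = \lambda_{max}, \lambda_2, \dots, \lambda_n$, where $|\lambda_l| \le \theta$ for every $l \ge 2$.

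The combinatorial engine is the observation that $A_\mathcal{G}$ is a nonnegative matrix whose positive off-diagonal entry at position $(i,j)$ records precisely that $i \sim j$; hence $(A_\mathcal{G}^{k})_{ij}$ is a sum of nonnegative walk-weights over walks of length $k$ from $i$ to $j$, and therefore vanishes whenever $k < d(i,j)$ (the minimum edge cardinality being at least $3$ keeps us in the genuine hypergraph regime and guarantees the adjacency entries behave as needed). Now choose vertices $i,j$ realising $d(i,j) = diam(\mathcal{G}) =: D$ and set $k = D-1$. Then $(A_\mathcal{G}^{k})_{ij} = 0$, while the spectral decomposition gives
\[
(A_\mathcal{G}^{k})_{ij} = \sum_{l=1}^{n} \lambda_l^{k} (X_1)_i (X_1)_j \big|_{l} = \sum_{l=1}^{n} \lambda_l^{k} (X_l)_i (X_l)_j = 0,
\]
so that isolating the Perron term yields
\[
\lambda_{max}^{k} (X_1)_i (X_1)_j = -\sum_{l=2}^{n} \lambda_l^{k} (X_l)_i (X_l)_j .
\]

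Next I would estimate the two sides. Since $X_1$ is positive and $\alpha = \min_i (X_1)_i$, the left-hand side is at least $\lambda_{max}^{k}\alpha^2$. For the right-hand side, using $|\lambda_l| \le \theta$ for $l\ge 2$, the Cauchy--Schwarz inequality, and the orthonormality of the basis (so $\sum_{l=1}^n (X_l)_i^2 = 1$ for each $i$), I obtain
\[
\Big| \sum_{l=2}^{n} \lambda_l^{k} (X_l)_i (X_l)_j \Big| \le \theta^{k} \sqrt{\big(1-(X_1)_i^2\big)\big(1-(X_1)_j^2\big)} \le \theta^{k}(1-\alpha^2),
\]
the last step because $(X_1)_i, (X_1)_j \ge \alpha$. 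Combining the two estimates gives $\lambda_{max}^{k}\alpha^2 \le \theta^{k}(1-\alpha^2)$, that is $(\lambda_{max}/\theta)^{k} \le (1-\alpha^2)/\alpha^2$. Taking logarithms is legitimate since $\log(\lambda_{max}/\theta)>0$, and the bound is meaningful because $\alpha^2 \le 1/n \le 1/2$ forces $(1-\alpha^2)/\alpha^2 \ge 1$. Substituting $k = D-1$ then produces $D \le 1 + \log((1-\alpha^2)/\alpha^2)/\log(\lambda_{max}/\theta)$, and since $D$ is an integer the floor may be inserted, which is the claim.

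I expect the only delicate points to be the two Perron--Frobenius facts, namely the strict positivity of $X_1$ (needed to ensure $(X_1)_i(X_1)_j \ge \alpha^2 > 0$) and the strict spectral gap $\theta < \lambda_{max}$ (needed for the logarithm to have positive base), both of which are already guaranteed for a connected hypergraph. Everything else reduces to the routine walk-counting identity for $(A_\mathcal{G}^{k})_{ij}$ and a single Cauchy--Schwarz estimate, so no genuine obstacle remains beyond assembling these ingredients carefully.
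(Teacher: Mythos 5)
Your proposal is correct and follows essentially the same route as the paper: spectral decomposition of $A_\mathcal{G}^{k}$, isolation of the Perron term bounded below by $\alpha^2\lambda_{max}^{k}$, and a Cauchy--Schwarz estimate $\theta^{k}(1-\alpha^2)$ on the remaining terms. The only cosmetic difference is that you argue contrapositively from $(A_\mathcal{G}^{D-1})_{ij}=0$ while the paper finds the least $r$ with $(A_\mathcal{G}^{r})_{ij}>0$; the two formulations yield the identical inequality and the same floor bound.
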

\begin{proof}
$A_\mathcal{G}$  is real symmetric and thus have orthonormal eigenvectors $X_l$ with $A_\mathcal{G} X_l = \lambda_l X_l$, where $\lambda_1=\lambda_{max}$.
 Let us choose $i,j\in V$ such that $d(i,j)=diam(\mathcal{G})$ and $r\ge diam(\mathcal{G})$ be  a positive integer. We 
  try to find the minimum value of $r$ such that $(A^r_\mathcal{G})_{ij}>0$.
  Using spectral decomposition  of  $A_\mathcal{G}$, $(A^r_\mathcal{G})_{ij}$ can be express as
 \begin{align*}  
     (A^r_\mathcal{G})_{ij}
     & =   \sum_{l=1}^{n} \lambda_l^r (X_l X^t_l)_{ij}\\  
     & \ge \lambda_{max}^r (X_1)_i (X_1)_{j} - \bigg|  \sum_{l=2}^{n} \lambda_l^r (X_l)_i (X_l)_{j} \bigg| \\
     & \ge \alpha^2 \lambda_{max}^r - \theta^r \bigg (\sum_{l=2}^{n} |(X_l)_i|^2 \bigg)^{1/2} \bigg (\sum_{l=2}^{n} |(X_l)_j|^2 \bigg)^{1/2}\\
     & \ge \alpha^2 \lambda_{max}^r - \theta^r (1- \alpha^2).
\end{align*}
 Now, $(A^r_\mathcal{G})_{ij}>0$ if $(\lambda_{max}/\theta)^r > (1- \alpha^2)/\alpha^2 $, which implies that 
 $r > \frac{\log((1-\alpha^2)/\alpha^2)}{\log(\lambda_{max}/\theta)}  .$ Thus the proof follows.
 \end{proof}
\begin{corollary}
 Let $\mathcal{G}(V,E)$ be a  $k$-regular connected hypergraph with $n$ vertices. Let $\theta$  be the second largest eigenvalue (in absolute value) of $A_\mathcal{G}$. Then
 $$ diam(\mathcal{G}) \le \bigg\lfloor 1 + \frac{\log(n-1)}{\log(k/\theta)} \bigg\rfloor.$$ 
\end{corollary}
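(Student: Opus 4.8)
The plan is to obtain this bound as a direct specialization of Theorem~\ref{Adj:diam:2ndEig}, whose general estimate
$$diam(\mathcal{G}) \le \left\lfloor 1 + \frac{\log((1-\alpha^2)/\alpha^2)}{\log(\lambda_{max}/\theta)} \right\rfloor$$
I may assume. The entire task then reduces to evaluating the two input quantities $\lambda_{max}$ and $\alpha = \min_i (X_1)_i$ explicitly in the $k$-regular case, since $\theta$ is left as is.

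First I would pin down the Perron eigenpair. Because the hypergraph is $k$-regular, every vertex satisfies $d_i = \sum_{j=1}^n (A_\mathcal{G})_{ij} = k$, so each row of $A_\mathcal{G}$ sums to $k$; equivalently the constant vector $\mathbf{1}=(1,\dots,1)^t$ obeys $A_\mathcal{G}\mathbf{1} = k\mathbf{1}$, exhibiting $k$ as an eigenvalue with a strictly positive eigenvector. Since $\mathcal{G}$ is connected, $A_\mathcal{G}$ is nonnegative and irreducible, so by the Perron--Frobenius theorem (already invoked in the discussion preceding the theorem) its spectral radius is a simple eigenvalue carrying a strictly positive eigenvector. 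As $\mathbf{1}$ is such a positive eigenvector, this forces $\lambda_{max}=k$ and identifies the corresponding unit eigenvector as $X_1 = \tfrac{1}{\sqrt{n}}\mathbf{1}$.

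It then follows that $(X_1)_i = 1/\sqrt{n}$ for every $i$, hence $\alpha = \min_i (X_1)_i = 1/\sqrt{n}$ and $\alpha^2 = 1/n$. Substituting gives $(1-\alpha^2)/\alpha^2 = (1-1/n)/(1/n) = n-1$ and $\lambda_{max}/\theta = k/\theta$, and feeding both into the theorem's estimate produces exactly
$$diam(\mathcal{G}) \le \left\lfloor 1 + \frac{\log(n-1)}{\log(k/\theta)} \right\rfloor,$$
which is the claim.

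I do not expect a genuine obstacle, since once the Perron data are fixed the result is a mechanical substitution. The one point needing care is justifying that the unit eigenvector for $\lambda_{max}$ is the \emph{uniform} vector rather than merely some positive vector: this is precisely where $k$-regularity enters, as regularity makes $\mathbf{1}$ an eigenvector for the spectral radius, and the simplicity of that eigenvalue (from connectedness/irreducibility) guarantees it is the only such eigenvector, so $\alpha$ is unambiguously $1/\sqrt{n}$. I would also note in passing that, the estimate being inherited from Theorem~\ref{Adj:diam:2ndEig}, one tacitly needs $\theta < k = \lambda_{max}$ so that $\log(k/\theta) > 0$ and the bound is well defined.
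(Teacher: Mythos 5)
Your proof is correct and is exactly the derivation the paper intends: the corollary is stated as an immediate specialization of Theorem~\ref{Adj:diam:2ndEig}, obtained by noting that $k$-regularity plus connectedness forces $\lambda_{max}=k$ with unit Perron eigenvector $\tfrac{1}{\sqrt{n}}\mathbf{1}$, hence $\alpha^2=1/n$ and $(1-\alpha^2)/\alpha^2=n-1$. Your closing remark that one tacitly needs $\theta<\lambda_{max}$ is well taken and matches the paper's own remark following the corollary.
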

 \noindent
\textbf{Remark: }For graphs, the above bounds are more sharp \cite{chung1989}. Also  note that, in Theorem \ref{Adj:diam:2ndEig}, $ \theta \ne \lambda_{max}$ since the underlying graph is  not bipartite.

\subsection{Uniform regular hypergraphs and eigenvalues  of adjacency matrices}
 Let $\mathcal{G}$ be an $m$-uniform hypergraph with  $n$ vertices and let $d_i$ be the degree of the vertex $i$ in $\mathcal{G}$. Further, let $\bar{d}_i$ be the degree of $i$ in $\bar{\mathcal{G}}$. Thus $d_i + \bar{d}_i = {n-1 \choose m-1 }$. Then $A_\mathcal{G} + A_{\bar{\mathcal{G}}} = A_{K^m_n} = \theta (J_n - I_n)$, where  $\theta = \frac{{n-2 \choose m-2}}{m-1}$, $J_n$ is the $(n \times n)$ matrix with all the entries are $1$ and $I_n$ is the $(n \times n)$ identity matrix.
 %
If $\mathcal{G}$ be an $m$-uniform $k$-regular hypergraph with  $n$ vertices then $\bar{\mathcal{G}}$ is $\big({n-1 \choose m-1 }-k\big)$-regular and $A_{\bar{\mathcal{G}}}$ is a symmetric non-negative matrix. Thus $A_{\bar{\mathcal{G}}}$ contains a Perron eigenvalue ${n-1 \choose m-1} - k$ with an  eigenvector $\textbf{\textit{1}}_{n}$. For every  non-Perron  eigenvector  $X$ of $A_\mathcal{G}$ with  an eigenvalue $\lambda$ we have $A_{\bar{\mathcal{G}}} X  
=  (-\theta -\lambda)X.$ Thus  $A_\mathcal{G}$ and $A_{\bar{\mathcal{G}}}$ have the same eigenvectors. Now we have the following proposition. 
%

\begin{proposition}
 If $\mathcal{G}$ is an $m$-uniform $k$-regular hypergraph with  $n$ vertices, then the minimum eigenvalue $\lambda_n$ of $A_\mathcal{G}$ satisfies the inequality $\lambda_n \ge k - \theta - {n-1 \choose m-1}$.
\end{proposition}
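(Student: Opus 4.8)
The plan is to exploit the complement identity $A_\mathcal{G} + A_{\bar{\mathcal{G}}} = \theta(J_n - I_n)$ recorded just above the statement, which ties the spectrum of $A_\mathcal{G}$ to that of the non-negative matrix $A_{\bar{\mathcal{G}}}$. First I would note that, since $\mathcal{G}$ is $k$-regular, the all-ones vector $\mathbf{1}_n$ is an eigenvector of $A_\mathcal{G}$ with eigenvalue $k$, and because $A_\mathcal{G}$ is symmetric and non-negative, this $k$ is its Perron (largest) eigenvalue. Hence the minimum eigenvalue $\lambda_n$ satisfies $\lambda_n \le k$, and its eigenvector is orthogonal to $\mathbf{1}_n$; that is, $\lambda_n$ is a non-Perron eigenvalue (the degenerate case in which the spectrum is the single point $k$ being trivial).

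Next I would invoke the relation established above for non-Perron eigenvectors: if $X$ is an eigenvector of $A_\mathcal{G}$ with non-Perron eigenvalue $\lambda$, then $A_{\bar{\mathcal{G}}} X = (-\theta - \lambda) X$, so that $-\theta - \lambda$ is an eigenvalue of $A_{\bar{\mathcal{G}}}$. Specialising to $\lambda = \lambda_n$, the number $-\theta - \lambda_n$ lies in the spectrum of $A_{\bar{\mathcal{G}}}$.

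The crux is then a Perron--Frobenius observation about $A_{\bar{\mathcal{G}}}$. Since $\bar{\mathcal{G}}$ is $\left(\binom{n-1}{m-1} - k\right)$-regular, the matrix $A_{\bar{\mathcal{G}}}$ is symmetric, non-negative, and has constant row sums, so its Perron eigenvalue equals its spectral radius, namely $\binom{n-1}{m-1} - k$. Consequently every eigenvalue of $A_{\bar{\mathcal{G}}}$ is bounded in absolute value by $\binom{n-1}{m-1} - k$. Applying this two-sided bound to the eigenvalue $-\theta - \lambda_n$ gives $-\theta - \lambda_n \le \binom{n-1}{m-1} - k$, and rearranging yields exactly $\lambda_n \ge k - \theta - \binom{n-1}{m-1}$.

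I expect the only genuinely delicate point to be the bookkeeping that guarantees $\lambda_n$ is a non-Perron eigenvalue, so that the identity $A_{\bar{\mathcal{G}}} X = (-\theta - \lambda) X$ legitimately applies to it; this is where one must rule out the trivial single-eigenvalue situation and use simplicity (or at least $\lambda_n \le k$ with orthogonality to $\mathbf{1}_n$). Once that is settled, the result is a direct consequence of the spectral-radius bound on the non-negative matrix $A_{\bar{\mathcal{G}}}$, and the remaining manipulation is routine arithmetic.
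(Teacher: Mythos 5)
Your proposal is correct and takes essentially the same route as the paper: both pass to the complement via $A_{\mathcal{G}}+A_{\bar{\mathcal{G}}}=\theta(J_n-I_n)$, note that a non-Perron eigenvalue $\lambda$ of $A_{\mathcal{G}}$ yields the eigenvalue $-\theta-\lambda$ of $A_{\bar{\mathcal{G}}}$, and then bound this by the Perron eigenvalue $\binom{n-1}{m-1}-k$ of the non-negative matrix $A_{\bar{\mathcal{G}}}$. Your version merely makes explicit the bookkeeping (orthogonality to $\mathbf{1}_n$, the degenerate case) that the paper leaves implicit.
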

\begin{proof}
Let us order the eigenvalues of $A_\mathcal{G}$ as $k = \lambda_1 \ge \lambda_2 \ge \dots \ge \lambda_n$. Then the eigenvalues of $A_{\bar{\mathcal{G}}}$ can be ordered as ${n-1 \choose m-1} - k \ge -\theta - \lambda_n \ge \dots \ge - \theta - \lambda_2$, which implies the result.
\end{proof}


\subsection{Hypergraph coloring and adjacency eigenvalues}
 A \textit{strong vertex coloring} of a hypergraph $\mathcal{G}$ is a coloring where any two adjacent vertices get different colors. The \textit{strong (vertex) chromatic number} $\gamma(\mathcal{G})$ of a hypergraph $\mathcal{G}$ is the minimum number of colors needed to have a strong vertex coloring of $\mathcal{G}$. 
 



\begin{thm}\label{HGraphColoring:Thm1}
Let $\gamma(\mathcal{G})$ be a hypergraph with $r(\G)=r$. Then
 $$\gamma(\mathcal{G}) \le 1 +  (r-1) \lambda_{max}(A_\mathcal{G}).$$
\end{thm}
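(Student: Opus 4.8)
The plan is to reduce the statement to the classical Wilf bound for ordinary graphs and then transfer that bound to the weighted adjacency matrix $A_\mathcal{G}$ by an entrywise comparison. First I would note, as already observed in this section, that a strong vertex coloring of $\mathcal{G}$ is exactly a proper vertex coloring of the underlying unweighted graph $G_0[A_\mathcal{G}]$: two vertices must receive different colors precisely when they are adjacent in $\mathcal{G}$, i.e.\ when they are joined by an edge of $G_0[A_\mathcal{G}]$. Hence $\gamma(\mathcal{G}) = \chi(G_0[A_\mathcal{G}])$, where $\chi$ denotes the usual chromatic number, and it suffices to bound the right-hand side.

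Second, I would apply Wilf's theorem (see, e.g., \cite{Cvetkovic2009}): for any graph $G$ with $0$--$1$ adjacency matrix $A_G$ one has $\chi(G) \le 1 + \lambda_{max}(A_G)$. Writing $A_{G_0}$ for the $0$--$1$ adjacency matrix of $G_0[A_\mathcal{G}]$, this gives $\gamma(\mathcal{G}) \le 1 + \lambda_{max}(A_{G_0})$, so the whole statement will follow once I show $\lambda_{max}(A_{G_0}) \le (r-1)\,\lambda_{max}(A_\mathcal{G})$.

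Third comes the comparison of the two matrices, which is the crux. Assume $r \ge 2$ (otherwise $\mathcal{G}$ has no pair of adjacent vertices, $\gamma(\mathcal{G})=1$, and the bound is trivial). If $i \sim j$ then some edge $e$ contains both $i$ and $j$, and every such edge has $|e| \le r$, so each summand in the definition of $(A_\mathcal{G})_{ij}$ is at least $\frac{1}{r-1}$; since both matrices have zero diagonal, this yields the entrywise inequality $A_\mathcal{G} \ge \frac{1}{r-1} A_{G_0} \ge 0$ between symmetric nonnegative matrices. I would then invoke the monotonicity of the spectral radius under entrywise domination: letting $v \ge 0$ be a unit Perron eigenvector of $A_{G_0}$ (which exists by Perron--Frobenius), the Rayleigh quotient gives $\lambda_{max}(A_\mathcal{G}) \ge v^{t} A_\mathcal{G}\, v \ge \frac{1}{r-1}\, v^{t} A_{G_0}\, v = \frac{1}{r-1}\lambda_{max}(A_{G_0})$. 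Rearranging and substituting into the Wilf bound completes the argument.

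The step requiring the most care is the spectral monotonicity. It is valid here only because $A_{G_0}$ is nonnegative and its Perron eigenvector $v$ can be taken componentwise nonnegative, so that the termwise inequality $v^{t}A_\mathcal{G} v \ge v^{t}(\tfrac{1}{r-1}A_{G_0})v$ holds; for general symmetric matrices, entrywise domination does \emph{not} imply domination of the largest eigenvalue, so the nonnegativity must be used explicitly rather than quoted generically. I would therefore isolate the comparison as a short lemma on nonnegative symmetric matrices before applying it, and double-check the zero-diagonal convention for $A_\mathcal{G}$ so that the entrywise bound is clean.
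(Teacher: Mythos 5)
Your proposal is correct and follows essentially the same route as the paper: identify $\gamma(\mathcal{G})$ with $\chi(G_0[A_\mathcal{G}])$, apply Wilf's bound, and then compare $\lambda_{max}$ of the $0$--$1$ adjacency matrix of $G_0[A_\mathcal{G}]$ with $(r-1)\lambda_{max}(A_\mathcal{G})$ via the entrywise inequality. The only difference is that you spell out the spectral-monotonicity step (via a nonnegative Perron eigenvector and the Rayleigh quotient) that the paper merely asserts, which is a worthwhile clarification but not a different argument.
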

\begin{proof}
For a simple unweighted graph $G$,  the vertex chromatic number
$\chi(G) \le 1 + \lambda_{max}(G),$
where $\lambda_{max}(G)$ is the maximum eigenvalue of the adjacency matrix of $G$ \cite{Wilf1967}.
Thus  we have
$ \gamma(\mathcal{G}) =  
 \chi({{G}_0[A_\mathcal{G}]}) \le 1 + \lambda_{max}({G}_0[A_\mathcal{G}]). $
Since each element of the adjacency matrix of ${G}_0[A_\mathcal{G}]$ is less than or equals to  the same of $(r-1)A_\mathcal{G}$, we have 
$ \lambda_{max}({G}_0[A_\mathcal{G}]) \le  (r-1) \lambda_{max}(A_\mathcal{G}).$
Now the proof follows from the last two inequalities.
\end{proof}

%

\begin{thm} \label{HGraphColoring:Thm2}
 Let $\mathcal{G}$ be a hypergraph with at least one edge. Then
 $$ \gamma(\mathcal{G}) \ge 1 - \frac{\lambda_{max}(A_\mathcal{G})}{\lambda_{min}(A_\mathcal{G})}
 .$$
\end{thm}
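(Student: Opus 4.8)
The statement to prove is the lower bound
$$\gamma(\mathcal{G}) \ge 1 - \frac{\lambda_{max}(A_\mathcal{G})}{\lambda_{min}(A_\mathcal{G})}$$
for a hypergraph with at least one edge. The plan is to reduce this to the classical Hoffman bound for the chromatic number of ordinary graphs, exactly in the spirit of Theorem~\ref{HGraphColoring:Thm1}. Recall that a strong vertex coloring of $\mathcal{G}$ assigns distinct colors to adjacent vertices, and two vertices are adjacent in $\mathcal{G}$ precisely when they are adjacent in the underlying unweighted graph $G_0[A_\mathcal{G}]$. Hence $\gamma(\mathcal{G}) = \chi(G_0[A_\mathcal{G}])$, and the whole problem transfers to the simple graph $G_0[A_\mathcal{G}]$.

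First I would invoke the Hoffman bound: for any simple graph $G$ with at least one edge, $\chi(G) \ge 1 - \lambda_{max}(G)/\lambda_{min}(G)$, where the eigenvalues are those of the ordinary $(0,1)$ adjacency matrix. Applying this with $G = G_0[A_\mathcal{G}]$ gives
$$\gamma(\mathcal{G}) = \chi(G_0[A_\mathcal{G}]) \ge 1 - \frac{\lambda_{max}(G_0[A_\mathcal{G}])}{\lambda_{min}(G_0[A_\mathcal{G}])}.$$
The task is then to show that the right-hand side is bounded below by $1 - \lambda_{max}(A_\mathcal{G})/\lambda_{min}(A_\mathcal{G})$, i.e.\ that replacing the $0/1$ adjacency matrix of $G_0[A_\mathcal{G}]$ by the weighted matrix $A_\mathcal{G}$ does not decrease the Hoffman ratio.

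The main obstacle, and the step requiring genuine care, is controlling how the ratio $\lambda_{max}/\lambda_{min}$ behaves when we pass from $G_0[A_\mathcal{G}]$ to $A_\mathcal{G}$. Both $\lambda_{max}$ and the (negative) $\lambda_{min}$ interact with the rescaling, so a crude entrywise comparison as in Theorem~\ref{HGraphColoring:Thm1} is not immediately enough; one cannot simply bound numerator and denominator independently since they move in the same direction. The cleanest route I would pursue is to observe that $A_\mathcal{G}$ and the weighted clique expansion share the same zero/nonzero pattern as $G_0[A_\mathcal{G}]$, and to reprove the Hoffman inequality directly for the weighted, symmetric, nonnegative matrix $A_\mathcal{G}$ rather than deducing it from the unweighted case. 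Concretely, one takes a proper coloring into $s = \gamma(\mathcal{G})$ color classes, writes $A_\mathcal{G}$ in block form according to these classes (all diagonal blocks vanish because no two vertices of the same class are adjacent), and applies the standard interlacing/averaging argument to the quotient matrix $B$ whose entries are the average row sums of the blocks. Since $B$ is an $s \times s$ nonnegative symmetric matrix with zero diagonal, its eigenvalues satisfy $\lambda_{min}(B) \le -\lambda_{max}(B)/(s-1)$; combining this with eigenvalue interlacing, $\lambda_{max}(B) \ge \lambda_{max}(A_\mathcal{G})$ and $\lambda_{min}(B) \ge \lambda_{min}(A_\mathcal{G})$, yields the claimed inequality after rearrangement. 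This interlacing/quotient-matrix argument is where the real work lies; the reduction $\gamma(\mathcal{G}) = \chi(G_0[A_\mathcal{G}])$ and the final algebraic manipulation are routine.
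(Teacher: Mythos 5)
Your final argument has the same skeleton as the paper's proof: order the vertices by the $\gamma(\mathcal{G})$ color classes so that $A_\mathcal{G}$ becomes a block matrix with all diagonal blocks zero, deduce $\lambda_{max}(A_\mathcal{G}) + (\gamma(\mathcal{G})-1)\lambda_{min}(A_\mathcal{G}) \le 0$, and rearrange using $\lambda_{min}(A_\mathcal{G}) < 0$ (you should state this last point explicitly --- it is exactly where the hypothesis of at least one edge enters, and it justifies dividing by a negative number). The paper simply cites Lemma~3.22 of Bapat for the spectral step, so your opening detour through $\chi(G_0[A_\mathcal{G}])$ and the unweighted Hoffman bound --- which you correctly recognize cannot be closed by an entrywise comparison --- is unnecessary; the weighted statement is what gets proved directly.

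There is, however, a genuine error in your sketch of that spectral step: for the quotient matrix $B$ whose entries are the \emph{average row sums} of the blocks, interlacing gives $\lambda_{max}(B) \le \lambda_{max}(A_\mathcal{G})$, not $\ge$. (Take the path $1\!-\!2\!-\!3\!-\!4$, viewed as a $2$-uniform hypergraph, with color classes $\{1,3\}$ and $\{2,4\}$: the averaged quotient is $\bigl(\begin{smallmatrix} 0 & 3/2 \\ 3/2 & 0\end{smallmatrix}\bigr)$, whose largest eigenvalue $3/2$ is strictly smaller than $\lambda_{max}=2\cos(\pi/5)$.) With the inequality in its correct direction your chain only yields $\lambda_{min}(A_\mathcal{G}) \le -\lambda_{max}(B)/(s-1)$, which does not imply the claim. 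The standard repair (Haemers) is to compress $A_\mathcal{G}$ not by plain averaging but onto the span of the vectors $u^{(i)}$ obtained by restricting a $\lambda_{max}$-eigenvector $u$ to each color class (classes on which $u$ vanishes may be discarded, which only strengthens the bound): the resulting $s\times s$ symmetric matrix $B$ still has zero diagonal and eigenvalues interlacing those of $A_\mathcal{G}$, but now its range contains $u$, which forces $\lambda_{max}(B) = \lambda_{max}(A_\mathcal{G})$ exactly; then $0 = \mathrm{tr}(B) \ge \lambda_{max}(A_\mathcal{G}) + (s-1)\lambda_{min}(B) \ge \lambda_{max}(A_\mathcal{G}) + (s-1)\lambda_{min}(A_\mathcal{G})$ and the theorem follows. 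With that correction your argument is complete and amounts to a self-contained proof of the lemma the paper invokes.
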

\begin{proof}
 Let $k$ be $\gamma(\mathcal{G})$. Now  $A_\mathcal{G}$ can be partitioned as 
\begin{displaymath}
\mathbf{A_\mathcal{G}} =
\left[ \begin{array}{cccc}
0 & A_{\mathcal{G}_{12}} & \ldots  & A_{\mathcal{G}_{1k}}\\
A_{\mathcal{G}_{21}} & 0 & \ldots & A_{\mathcal{G}_{2k}}\\
\vdots & \vdots & \ddots & \vdots\\
A_{\mathcal{G}_{k1}} & A_{\mathcal{G}_{k2}} & \ldots & 0
\end{array} \right].
\end{displaymath}
Using the   Lemma 3.22 in \cite{bapat2010}  we have  $\lambda_{max}(A_\mathcal{G}) + (k-1) \lambda_{min}(A_\mathcal{G}) \le 0.$
Since $\mathcal{G}$ has at least one edge, $\lambda_{min}(A_\mathcal{G}) < 0$. Hence the proof follows.
\end{proof}

 From Theorems \ref{HGraphColoring:Thm1} and \ref{HGraphColoring:Thm2}  we have the following corollary.
 \begin{corollary}
 Let $\mathcal{G}$ be a hypergraph with at least one edge. Then
 $$|\lambda_{min}(A_\mathcal{G})| \ge 1/(r(\G)-1) .$$
 \end{corollary}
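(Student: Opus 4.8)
The plan is to combine the two coloring bounds just proved, Theorems~\ref{HGraphColoring:Thm1} and~\ref{HGraphColoring:Thm2}, and eliminate the chromatic number $\gamma(\mathcal{G})$ between them. Since $\mathcal{G}$ has at least one edge, both theorems apply, and I would like to squeeze out an inequality purely about the eigenvalues of $A_\mathcal{G}$.

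First I would write down the two bounds side by side. From Theorem~\ref{HGraphColoring:Thm1}, using $r(\G)=r$, we have $\gamma(\mathcal{G}) \le 1 + (r-1)\lambda_{max}(A_\mathcal{G})$. From Theorem~\ref{HGraphColoring:Thm2} we have $\gamma(\mathcal{G}) \ge 1 - \lambda_{max}(A_\mathcal{G})/\lambda_{min}(A_\mathcal{G})$. Chaining these through the common quantity $\gamma(\mathcal{G})$ yields
\begin{equation*}
1 - \frac{\lambda_{max}(A_\mathcal{G})}{\lambda_{min}(A_\mathcal{G})} \le \gamma(\mathcal{G}) \le 1 + (r-1)\lambda_{max}(A_\mathcal{G}),
\end{equation*}
so that $- \lambda_{max}(A_\mathcal{G})/\lambda_{min}(A_\mathcal{G}) \le (r-1)\lambda_{max}(A_\mathcal{G})$. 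Since $\mathcal{G}$ has at least one edge, $\lambda_{max}(A_\mathcal{G}) > 0$ (the adjacency entries are nonnegative and not all zero), so I can cancel the common factor $\lambda_{max}(A_\mathcal{G})$ from both sides to obtain $-1/\lambda_{min}(A_\mathcal{G}) \le r-1$.

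Next I would invoke the sign of $\lambda_{min}$. As noted in the proof of Theorem~\ref{HGraphColoring:Thm2}, having at least one edge forces $\lambda_{min}(A_\mathcal{G}) < 0$, hence $|\lambda_{min}(A_\mathcal{G})| = -\lambda_{min}(A_\mathcal{G})$. Rewriting $-1/\lambda_{min}(A_\mathcal{G}) = 1/|\lambda_{min}(A_\mathcal{G})| \le r-1$ and rearranging gives exactly $|\lambda_{min}(A_\mathcal{G})| \ge 1/(r-1)$, which is the claim with $r = r(\G)$.

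The main thing to watch is the cancellation step: dividing by $\lambda_{max}(A_\mathcal{G})$ requires its strict positivity, and flipping the inequality $-1/\lambda_{min} \le r-1$ into a lower bound on $|\lambda_{min}|$ requires care with the negative sign, since multiplying an inequality by the negative quantity $\lambda_{min}$ (or equivalently dividing) reverses its direction. I would verify that $r-1 > 0$ (which holds whenever there is an edge of cardinality at least two, automatic for a genuine hypergraph edge) so that taking reciprocals of $1/|\lambda_{min}| \le r-1$ is legitimate and order-reversing. None of these steps is computationally heavy; the corollary is essentially an algebraic consequence of the two preceding theorems, so I expect no substantive obstacle beyond keeping the inequality directions and sign conventions straight.
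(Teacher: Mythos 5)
Your proposal is correct and matches the paper's intent exactly: the paper derives this corollary precisely by chaining the upper bound of Theorem~\ref{HGraphColoring:Thm1} with the lower bound of Theorem~\ref{HGraphColoring:Thm2}, cancelling $\lambda_{max}(A_\mathcal{G})>0$, and using $\lambda_{min}(A_\mathcal{G})<0$ to rearrange. Your attention to the sign of $\lambda_{min}$ and the positivity of $\lambda_{max}$ and $r(\G)-1$ is exactly the care the argument requires.
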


\section{Combinatorial Laplacian matrix and operator of a hypergraph}

Now we define our (combinatorial) Laplacian operator $L_\G$ for a  hypergraph $\G (V,E)$ on $n$ vertices. 
We take the same usual inner product $\langle f_1,f_2 \rangle = \sum_{i\in V} f_1(i)f_2(i)$ for the $n$ dimensional Hilbert space $L^2(\G)$ constructed with all real-valued functions $f$ on $\G$, i.e., $f: V \rightarrow \mathbb{R}$.  Now our Laplacian operator
$$ L_\G : L^2(\G)  \rightarrow L^2(\G) $$
 defined as 
$$ (L_\G f)(i) :=  \sum_{j, i\sim j} \sum_{\substack{e  \in E, \\ i,j\in e} } \frac{1}{|e|-1}(f(i)-f(j)) = d_i f(i) - \sum_{j, i\sim j} \sum_{\substack{e  \in E, \\ i,j\in e} } \frac{1}{|e|-1} f(j).$$
\noindent
It is easy to verify that $L_\G$ is symmetric (self-adjoint) w.r.t.~the usual inner product $\langle .,. \rangle$, i.e., $\langle L_\mathcal{G} f_1,f_2 \rangle = \langle  f_1, L_\mathcal{G} f_2 \rangle ,$ for all $f_1, f_2 \in \mathbb{R}^n$. So the eigenvalues of $L_\mathcal{G}$ are real. Since
$\langle L_\mathcal{G} f,f \rangle =  \sum_{ i\sim j} \sum_{\substack{e  \in E, \\ i,j\in e} } \frac{1}{|e|-1} (f(i) - f(j))^2 \ge 0,$ for all $f \in \mathbb{R}^n$, $L_\G$ is nonnegative, i.e., the eigenvalues of $L_\G$ are nonnegative.
The Rayleigh Quotient $\mathcal{R}_{L_\G} (f)$  of a function $f: V\rightarrow \mathbb{R}$ is defined as 
$$\mathcal{R}_{L_\G} (f) = \frac{\langle L_\mathcal{G} f,f \rangle}{\langle f,f \rangle} = 
\frac{    \sum_{ i\sim j} \sum_{\substack{e  \in E, \\ i,j\in e} } \frac{1}{|e|-1} (f(i) - f(j))^2  }{\sum_{i\in V} f(i)^2}.$$
For standard basis  we get the matrix from Laplacian operator $L_\G$ as
\begin{displaymath}
(L_\G)_{ij} = \left\{ \begin{array}{ll}
 d_i & \textrm{if $i=j$,}\\
 \sum_{\substack{e  \in E, \\ i,j\in e} } \frac{-1}{|e|-1} & \textrm{if $i\sim j$,}\\
 0 & \textrm{elsewhere.}
  \end{array} \right.
\end{displaymath}
So, $L_\G = D_\G - A_\G$, where $D_\G$ is the diagonal matrix where the entries are the degrees $d_i$ of the vertices $i$ of $\G$. 
Any $\lambda(L_\G) \in \mathbb{R}$ becomes an eigenvalue of $L_\G$ if, for a nonzero $u\in \mathbb{R}^n$, it satisfies the  equation
\begin{equation}\label{Lap:EigValEqu}
L_\G u =  \lambda(L_\G)u.
\end{equation}

Let us order the eigenvalues of $L_\G$ as $\lambda_1(L_\G) \le \lambda_2(L_\G) \le \dots \le \lambda_n(L_\G)$. Now find an orthonormal basis of $L^2(\G)$ consisting of eigenfunctions of $L_\G$,
$u_k, k=1,\dots,n$ as follows. 
First we find $u_1$ from the expression
$\lambda_1 (L_\G) = \inf_{u\in L^2(\G) } \{\langle L_\G u, u  \rangle : ||u||=1 \} .$
Now iteratively define Hilbert space of all real-valued functions on $\G$ with the scalar product $\langle., .\rangle$,
$ H_k := \{ v \in L^2(\G) : \langle v, u_l\rangle =0 \text{ for }l\le k \}.$ Then we start with the function $u_1$ (eigenfunction for the eigenvalue $\lambda_1(L_\G)$) and find all the eigenvalues of $L_\G$ as
$\lambda_k(L_\G) = \inf_{u\in H_k - \{0\}} \bigg \{ \frac{\langle L_\G u, u\rangle}{\langle u, u\rangle} \bigg \}.$
Thus, 
$\lambda_2(L_\G) = \inf_{0\ne u \perp u_1  } \bigg \{ \frac{\langle L_\G u, u\rangle}{\langle u, u\rangle} \bigg \}.$
We can also find  $\lambda_n (L_\G)$ as
$\lambda_n (L_\G) = \sup_{u\in L^2(\G)} \{\langle L_\G u, u  \rangle : ||u||=1 \} .$

Ger{\v s}gorin circle theorem \cite{Ger}
provides  a trivial  bounds on    eigenvalues $\lambda$  of  $L_\mathcal{G}$ as   $|\lambda|\le 2d_{max}$.
As in adjacency matrix for an $m$-uniform hypergraph $\mathcal{G}$ with $n$ vertices we have $L_\mathcal{G} + L_{\bar{\mathcal{G}}} = L_{K^m_n} =  \phi_m (n) I_n -  \theta J_n $, where  
$\phi_m (n) = \frac{n}{n-1}   {n-1 \choose m-1}$,
$\theta = \frac{{n-2 \choose m-2}}{m-1}$, $J_n$ is the $(n \times n)$ matrix with all the entries  $1$ and $I_n$ is the $(n \times n)$ identity matrix.
%
Now it is easy to show that for an $m$-uniform  hypergraph $\mathcal{G}$  with  $n$ vertices, if $0=\lambda_1 \le \lambda_2 \le \dots \le \lambda_n$ be the eigenvalues of $L_\G$ with the corresponding eigenvectors,  $\textbf{\textit{1}}_{n} = X_1, X_2, \dots, X_n$, respectively, then the eigenvalues of $L_{\bar{\mathcal{G}}}$ are $0=\lambda_1,  \phi_m (n)  - \lambda_2, \dots,  \phi_m (n)  - \lambda_n$ with the same set of corresponding eigenvectors   $ X_1, X_2, \dots, X_n$, respectively.
%
Now we  have the following computations on eigenvalues. 
	 The eigenvalues of $L_{K^m_n}$ are $0$ and $\phi_m(n)$ with the (algebraic) multiplicity  $1$ and $n-1$, respectively.
		The eigenvalues of $L_{K^m_{n_1,n_2}}$ are $0, \phi_m(n_1 + n_2), \phi_m(n_1 + n_2) - \phi_m(n_1)$ and $\phi_m(n_1 + n_2) - \phi_m(n_2)$ with the multiplicity $1, 1, n_1 -1$ and $n_2 - 1$, respectively.
		If $0, \lambda_2, \dots, \lambda_{n_1}$ and $0, \mu_2, \dots, \mu_{n_2}$ are the eigenvalues of $L_{\G_1}$ and $L_{\G_2}$, respectively, where $\G_1$ and $\G_2$ are two $m$-uniform hypergraphs  with number of vertices, $n_1$ and $n_2$, respectively, then the eigenvalues of $L_{\G_1 + \G_2}$ are $0, \phi_m(n_1 + n_2)$, $ \phi_m(n_1 + n_2) - \phi_m(n_1) + \lambda_2$, $\dots,$ $ \phi_m(n_1 + n_2) - \phi_m(n_1) + \lambda_{n_1}$, $ \phi_m(n_1 + n_2) - \phi_m(n_2) + \mu_2$, $\dots,$ $ \phi_m(n_1 + n_2) - \phi_m(n_2) + \mu_{n_2}$, where $\G_1 + \G_2 = \overline{ \bar{\G_1} \cup \bar{\G_2}}$.	 
Also note that the Theorem \ref{Adj:Thm:GraphCrossProduct}  holds for Laplacian matrices of two uniform hypergraphs and using it we can compute the  eigenvalues of $L_{Q(n,m)}$.
%

The definition of Laplacian matrix for a uniform hypergraph is similar, but not the same as defined in \cite{Rodriguez2002, Rodriguez2003, Rodriguez2009} for studying spectral properties of uniform hypergraphs. In \cite{Rodriguez2002},  Rodr\'iguez studied the Laplacian eigenvalues of a uniform hypergraph and several metric parameters such as the diameter, mean distance, excess, cutsets and bandwidth. A very trivial upper bound for diameter by distinct eigenvalues of Laplacian matrix is mentioned.  The bounds on parameters related to distance in (uniform) hypergraphs, such as eccentricity, excess were investigated in \cite{Rodriguez2003} . In \cite{Rodriguez2009}, the distance between two vertices in a (uniform) hypergraph was explained by the degree of a real polynomial of Laplacian matrix. The relation between parameters related to partition-problems of a (uniform) hypergraph with 
second smallest and the largest eigenvalues of   Laplacian matrix was explored. Only the lower bound for isoperimetric number and bipartition width were figured out in terms of the second smallest eigenvalue of the same. Upper bound for max cut, domination number and independence number was mentioned by the largest Laplacian-eigenvalue.

\subsection{Hypergraph connectivity and eigenvalues of a (combinatorial) Laplacian matrix}
Now it is easy to verify that  $\G$ is connected \textit{iff} $\lambda_2(L_\G) \ne 0$ and then 
 any constant function $u\in \mathbb{R}^n$ becomes the eigenfunction with the eigenvalue $\lambda_1(L_\G) = 0 $. If   $\G(V,E)$ has $k$ connected components, then
 the (algebraic) multiplicity of the eigenvalue $0$ of $L_\G$ is exactly $k$. So, we call $\lambda_2(L_\G)$  \textit{algebraic weak connectivity} of hypergraph $\G$. The following theorems show more relation of $\lambda_2(L_\G)$ with the different aspects of connectivity of hypergraph $\G$.

 A set of vertices in a hypergraph $\G$ is a \textit{weak vertex cut} of $\G$ if weak deletion of the vertices from that set increases the number of connected components in $\G$. 
 The \textit{weak connectivity number} $\kappa_W(\G)$ is the minimum size of a weak vertex cut in $\G$.
\begin{thm}
Let $\G(V,E)$ be a  connected hypergraph with $n (\ge 3)$ vertices, such that, $\G$ contains at least one pair of nonadjacent vertices and $d_{max} \le cr(\G)$. Then $\lambda_2(L_\G) \le \kappa_W(\G)$.
\end{thm}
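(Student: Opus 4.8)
The plan is to adapt Fiedler's classical argument, which bounds the algebraic connectivity of a non-complete graph by its vertex connectivity, to the weighted-graph Laplacian $L_\G = D_\G - A_\G$. The starting point is the variational characterization recorded above, $\lambda_2(L_\G) = \inf\{\mathcal{R}_{L_\G}(f) : f\ne 0,\ f\perp \mathbf{1}_n\}$, together with the fact that $L_\G$ is exactly the Laplacian of the weighted graph $G[A_\G]$ whose edge weights are $w_{ij} = (A_\G)_{ij} = \sum_{e\in E,\ i,j\in e}\frac{1}{|e|-1}$. It therefore suffices to exhibit one test function $f$, orthogonal to the constants, whose Rayleigh quotient is at most $\kappa_W(\G)$.

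First I would fix a minimum weak vertex cut $S$ with $|S| = \kappa_W(\G)$ and examine the effect of weakly deleting $S$. The key structural claim is that $V\setminus S$ splits as a disjoint union $V_1\sqcup V_2$ with $V_1,V_2\ne\emptyset$ (a genuine cut exists because $\G$ has a nonadjacent pair and $n\ge 3$) in such a way that no edge of $\G$ meets both $V_1$ and $V_2$; indeed, if an edge $e$ met both, then $e\setminus S$ would still join the two sides after weak deletion, contradicting that $S$ disconnects $\G$. Writing $n_1 = |V_1|$ and $n_2 = |V_2|$, I would take the Fiedler test vector $f$ equal to $n_2$ on $V_1$, to $-n_1$ on $V_2$, and to $0$ on $S$. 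Then $\langle f, \mathbf{1}_n\rangle = n_1 n_2 - n_2 n_1 = 0$ and $\langle f,f\rangle = n_1 n_2^2 + n_2 n_1^2 = n_1 n_2 (n_1+n_2)$.

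Next I would expand the numerator $\langle L_\G f, f\rangle = \sum_{e\in E}\frac{1}{|e|-1}\sum_{\{i,j\}\subseteq e}(f(i)-f(j))^2$. Because no edge meets both $V_1$ and $V_2$, the only pairs with $f(i)\ne f(j)$ are those between $S$ and $V_1$ (difference $n_2$) and those between $S$ and $V_2$ (difference $n_1$), so the numerator equals $n_2^2\sum_{v\in S,\,w\in V_1} w_{vw} + n_1^2\sum_{v\in S,\,w\in V_2} w_{vw}$. The crux is then the two boundary estimates $\sum_{v\in S,\,w\in V_1} w_{vw}\le \kappa_W(\G)\, n_1$ and $\sum_{v\in S,\,w\in V_2} w_{vw}\le \kappa_W(\G)\, n_2$. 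Granting these, the numerator is at most $\kappa_W(\G)\,(n_1 n_2^2 + n_2 n_1^2) = \kappa_W(\G)\,\langle f,f\rangle$, whence $\mathcal{R}_{L_\G}(f)\le \kappa_W(\G)$ and the theorem follows.

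These boundary estimates are where the hypothesis $d_{max}\le cr(\G)$ enters, and I expect them to be the main obstacle. Since $|S| = \kappa_W(\G)$ and $|V_1| = n_1$, the inequality $\sum_{v\in S,\,w\in V_1} w_{vw}\le \kappa_W(\G)\,n_1$ would follow at once from the pointwise bound $w_{ij}\le 1$ on all weights. This is precisely the role of the co-rank condition: $w_{ij} = \sum_{e\ni i,j}\frac{1}{|e|-1}\le \frac{d_{ij}}{cr(\G)-1}\le \frac{d_{max}}{cr(\G)-1}$, where $d_{ij}$ denotes the codegree, and $d_{max}\le cr(\G)$ pushes this quantity down to (essentially) $1$. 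Making this final step clean is the delicate point; the honest version is to bound the full boundary sums $\sum_{v\in S,\,w\in V_i} w_{vw}$ directly, retaining the $\frac{1}{|e|-1}$ factors and weighing them against the co-rank, rather than estimating each entry separately. Once this weight control is secured, the spectral setup and the combinatorics of the partition are routine.
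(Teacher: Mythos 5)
Your setup coincides with the paper's own proof: the same three-block partition $V_1 \sqcup S \sqcup V_2$ with no edge meeting both $V_1$ and $V_2$, the same Fiedler test vector taking values $n_2$, $0$, $-n_1$, and the same reduction to the boundary estimate that the total weight from any $w \notin S$ into $S$ is at most $\kappa_W(\G)$. The gap is exactly where you flag it, and it is a genuine one: from $w_{vw} \le d_{vw}/(cr(\G)-1)$ and $d_{vw} \le d_{max} \le cr(\G)$ you only obtain $w_{vw} \le cr(\G)/(cr(\G)-1) > 1$, so your argument as written proves $\lambda_2(L_\G) \le \frac{cr(\G)}{cr(\G)-1}\,\kappa_W(\G)$ rather than the stated bound. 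Your suggested repair --- keeping the $1/(|e|-1)$ factors and bounding the boundary sums edge by edge --- does not rescue this either: one gets $\sum_{v\in S} w_{vw} = \sum_{e \ni w} |e\cap S|/(|e|-1) \le d_w\,\kappa_W(\G)/(cr(\G)-1)$, which carries the same multiplicative loss.

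The missing idea is to use the \emph{minimality} of the cut to improve the codegree bound by one. Since $S$ is a minimum weak vertex cut, every $v \in S$ is adjacent to some vertex of $V_1$ and to some vertex of $V_2$; otherwise $S \setminus \{v\}$ would still disconnect $\G$. Because no edge meets both $V_1$ and $V_2$, for $w \in V_1$ (say) the edges containing both $v$ and $w$ all lie inside $V_1 \cup S$, and so they exclude at least one edge at $v$, namely one witnessing the adjacency of $v$ to $V_2$. Hence $d_{vw} \le d_v - 1 \le d_{max} - 1 \le cr(\G) - 1$, which gives $w_{vw} \le d_{vw}/(cr(\G)-1) \le 1$ exactly. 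This is precisely how the paper closes the estimate (it introduces $k_i = \sum_{j \in S,\, j \sim i} w_{ij} \le \kappa_W(\G)$ and computes $(L_\G u)(i) = n_2 k_i$ on $V_1$ and $-n_1 k_i$ on $V_2$); with this one observation added, your argument goes through verbatim.
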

\begin{proof}
 Let $W$ be a weak vertex cut of $\G$ such that $|W|=\kappa_W(\G)$. Let us partition the vertex set of $\G$ as $V_1\cup W \cup V_2$ such that no vertex in $V_1$ is adjacent to a vertex in $V_2$. Let $|V_1|=n_1$ and $|V_2|=n_2$. 
 
 Since $\G$ is connected, $u_1$ is constant. Let us construct a real-valued function $u$,    orthogonal to $ u_1$, as
 \begin{displaymath}
u(i) = \left\{ \begin{array}{ll}
 n_2 & \textrm{if $i\in V_1$,}\\
  0 & \textrm{if $i\in W$,}\\
   -n_1 & \textrm{if $i\in V_2$.}
  \end{array} \right.
\end{displaymath}
Since any vertex $j\in W$ is adjacent to a vertex in $V_1$  and also to a vertex in $V_2$, and $d_{max} \le cr(\G)$, then $d_{ij}\le cr(\G)-1$ for all $j\in W$ and $i \notin W$. 
Now,  for any vertex $i \notin W$, we define $k_i = \sum_{j\in W, j\sim i}\sum_{\substack{e  \in E, \\ i,j\in e} } \frac{1}{|e|-1} \le \sum_{j\in W, j\sim i}\frac{d_{ij}}{cr(\G)-1} \le\kappa_W(\G) $.
Thus, for all $i \in V_1$,
 $ (L_\G u)(i)  = d_i u(i) - \sum_{j\in V_1, j\sim i} \sum_{\substack{e  \in E, \\ i,j\in e} } \frac{1}{|e|-1} u(j)
 = n_2 d_i - n_2 (d_i - k_i) =n_2 k_i.$
Similarly, for all $i \in V_2$, we have 
 $(L_\G u)(i)  =  - n_1 k_i $.
Hence,
$\lambda_2(L_\G) ||u||^2 \le \langle L_\G u, u  \rangle  \le n_1 n_2^2 \kappa_W(\G) + n_1^2 n_2 \kappa_W(\G) = \kappa_W(\G)  ||u||^2
.$
\end{proof}

\begin{thm}
 Let $\G (V,E)$ be a   hypergraph on $n$ vertices. Then, for a nonempty $S\subset V$, we have
 $$ (r(\G)-1)
 \frac{\lambda_n(L_\G) |S| |V\setminus S|}{n} \ge
 |\partial S| \ge \frac{cr(\G)-1}{\lfloor
 r(\G)^2/4 \rfloor} \frac{\lambda_2(L_\G) |S| |V\setminus S|}{n}.$$
\end{thm}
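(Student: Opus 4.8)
The plan is to probe both extremes of the Laplacian spectrum with a single cut function, exactly as in the classical edge-isoperimetric argument for graphs. Fix a nonempty $S\subsetneq V$ and define $f:V\to\mathbb{R}$ by $f(i)=|V\setminus S|$ for $i\in S$ and $f(i)=-|S|$ for $i\in V\setminus S$. Then $\sum_{i\in V}f(i)=|S|\,|V\setminus S|-|V\setminus S|\,|S|=0$, so $f\perp\mathbf{1}_n$. Since $L_\G\mathbf{1}_n=0$, the vector $\mathbf{1}_n$ always spans a $\lambda_1(L_\G)=0$ direction (regardless of connectivity), so we may take $u_1=\mathbf{1}_n$ in the Courant--Fischer characterization $\lambda_2(L_\G)=\inf_{0\ne u\perp\mathbf{1}_n}\mathcal{R}_{L_\G}(u)$, and $f$ is admissible there. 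A direct computation gives $\langle f,f\rangle=|S|\,|V\setminus S|^2+|V\setminus S|\,|S|^2=n\,|S|\,|V\setminus S|$.

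Next I would evaluate the Dirichlet form. Using $\langle L_\G f,f\rangle=\sum_{i\sim j}\sum_{e\in E,\,i,j\in e}\frac{1}{|e|-1}(f(i)-f(j))^2$, note that $f(i)-f(j)$ vanishes when $i,j$ lie on the same side of the cut and equals $\pm n$ when they lie on opposite sides, so $(f(i)-f(j))^2=n^2$ on the cut. Reorganizing the double sum by edges and writing $s_e=|e\cap S|$, $t_e=|e\cap(V\setminus S)|=|e|-s_e$, the number of straddling vertex pairs inside $e$ is $s_e t_e$, whence
$$\langle L_\G f,f\rangle = n^2\sum_{e\in E}\frac{s_e t_e}{|e|-1} = n^2\sum_{e\in\partial S}\frac{s_e t_e}{|e|-1},$$
the last equality holding because $s_e t_e=0$ exactly for the edges not in $\partial S$.

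For the upper bound on $|\partial S|$ I would feed $f$ into $\lambda_n(L_\G)=\sup_{\|u\|=1}\langle L_\G u,u\rangle\ge\langle L_\G f,f\rangle/\langle f,f\rangle$. Each boundary edge satisfies $s_e t_e\ge 1$ and $|e|-1\le r(\G)-1$, so $\frac{s_e t_e}{|e|-1}\ge\frac{1}{r(\G)-1}$ and $\langle L_\G f,f\rangle\ge n^2|\partial S|/(r(\G)-1)$; dividing by $\langle f,f\rangle=n\,|S|\,|V\setminus S|$ rearranges to the left inequality. For the lower bound I would instead use $\lambda_2(L_\G)\le\langle L_\G f,f\rangle/\langle f,f\rangle$, now bounding each boundary term from above by $s_e t_e\le\lfloor |e|/2\rfloor\lceil |e|/2\rceil=\lfloor |e|^2/4\rfloor\le\lfloor r(\G)^2/4\rfloor$ together with $|e|-1\ge cr(\G)-1$, so that $\frac{s_e t_e}{|e|-1}\le\frac{\lfloor r(\G)^2/4\rfloor}{cr(\G)-1}$; this gives $\langle L_\G f,f\rangle\le n^2|\partial S|\lfloor r(\G)^2/4\rfloor/(cr(\G)-1)$, and the same normalization yields the right inequality.

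The normalizations are routine; the step requiring care is the edge-wise control of $s_e t_e/(|e|-1)$. The only genuine subtlety is recognizing that the quadratic form must be reweighted so each hyperedge carries the count $s_e t_e$ of its straddling pairs (rather than a single $0/1$ indicator as in the graph case), and then optimizing the two competing quantities $s_e t_e$ and $|e|-1$ over the admissible range $cr(\G)\le|e|\le r(\G)$ in \emph{opposite} directions for the two bounds. Once $s_e t_e$ is pinned between $1$ and $\lfloor|e|^2/4\rfloor$, the rest follows mechanically.
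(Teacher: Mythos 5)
Your proposal is correct and follows essentially the same route as the paper: the same cut vector $f$ (the paper's $u$ with values $n-n_S$ and $-n_S$), the same Rayleigh-quotient comparisons with $\lambda_2$ and $\lambda_n$, and the same edge-wise bounding of the number of straddling pairs between $1$ and $\lfloor r(\G)^2/4\rfloor$ against the weight $1/(|e|-1)$. Your explicit reorganization of the Dirichlet form as $n^2\sum_{e\in\partial S}s_e t_e/(|e|-1)$ is just a cleaner statement of the counting the paper describes in words.
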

\begin{proof}
 Let us  construct a real-valued function $u$, orthogonal to $ u_1$, as
 \begin{displaymath}
u(i) = \left\{ \begin{array}{ll}
 n - n_S & \textrm{if $i\in S$,}\\
  -n_S & \textrm{if $i\in V\setminus S$,}
  \end{array} \right.
\end{displaymath}
where   $n_S=|S|$.
Now we have
$$|\partial S| \frac{1}{r(\G)-1}  n^2
\le
 \langle L_\G u, u  \rangle = \sum_{ i\sim j} \Big(\sum_{\substack{e  \in E, \\ i,j\in e} } \frac{1}{|e|-1}(u(i) - u(j))^2 \Big)  
  \le |\partial S| \frac{1}{cr(\G)-1} \lfloor  r(\G)^2/4 \rfloor n^2. 
$$
The inequality on the right side holds because if $i\in S$ and $j \in V\setminus S$, then the number of terms in the parentheses in the above equation is  maximum  when $|e|=r(\G)$ and there are equal number of vertices in $e$ from $S$ and $V\setminus S$, respectively, and  is equal to  $\lfloor  r(\G)^2/4 \rfloor$. Similary, the inequality on the left side holds when there is only one vertex from a partition in $e\in \partial S$.
Thus,
$ \lambda_2(L_\G) ||u||^2 = \lambda_2(L_\G) nn_S (n-n_S) \le  \langle L_\G u, u  \rangle \le  |\partial S| \frac{1}{cr(\G)-1} \lfloor  r(\G)^2/4 \rfloor n^2 .$
Similarly, we have $ \lambda_n(L_\G) ||u||^2 = \lambda_n(L_\G) nn_S (n-n_S) \ge  \langle L_\G u, u  \rangle \ge |\partial S| \frac{1}{r(\G)-1}  n^2 $.
Hence the proof follows.
\end{proof}

Now we bound the Cheeger constant   
$$h(\G) := \min_{\phi \ne S \subset V} \bigg\{ \frac{|\partial S|}{\min (|S|, |V\setminus S|)} \bigg\}$$ of a hypergraph $\G (V,E)$ from below and above through $\lambda_2(L_\G)$.

\begin{thm}\label{CheegConstant:Lap:Thm1}
 Let $\G (V,E)$ be a connected hypergraph with $n$ vertices. Then $$h(\G) \ge 2 \lambda_2(L_\G) \Big(\frac{cr(\G)-1}{r(\G)\big(r(\G)-1\big)}\Big) .$$ 
\end{thm}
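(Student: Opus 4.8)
The plan is to run the ``easy'' (combinatorial) direction of a Cheeger-type inequality, exactly parallel to the graph case: produce a single test function from an optimal cut and feed it into the variational characterisation of $\lambda_2(L_\G)$. Let $S$ be a set achieving the infimum defining $h(\G)$, and assume without loss of generality that $|S|\le|V\setminus S|$, so that $\min(|S|,|V\setminus S|)=|S|$ and $|V\setminus S|\ge n/2$. Writing $n_S=|S|$, I would take the test function
\[
u(i)=\begin{cases} n-n_S & \text{if } i\in S,\\ -n_S & \text{if } i\in V\setminus S,\end{cases}
\]
which satisfies $\sum_i u(i)=0$, i.e.\ $u\perp u_1$ (the constant first eigenfunction), so that $\lambda_2(L_\G)\,\|u\|^2\le\langle L_\G u,u\rangle$, where $\|u\|^2=n\,n_S(n-n_S)$ by a direct computation. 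This is the same test function used in the preceding two-sided isoperimetric estimate.

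Next I would evaluate the Dirichlet form $\langle L_\G u,u\rangle=\sum_{i\sim j}\sum_{e\ni i,j}\frac{1}{|e|-1}(u(i)-u(j))^2$ using that $u$ is constant on $S$ and on $V\setminus S$: only pairs straddling the cut contribute, and each such pair has $(u(i)-u(j))^2=n^2$. Grouping the crossing pairs by the edge $e$ containing them, an edge $e\in\partial S$ with $a_e$ endpoints in $S$ and $b_e=|e|-a_e$ endpoints in $V\setminus S$ contributes $a_e b_e/(|e|-1)$, so that $\langle L_\G u,u\rangle=n^2\sum_{e\in\partial S} a_e b_e/(|e|-1)$.

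The heart of the argument is then the per-edge estimate. Since $a_e+b_e=|e|$ with $a_e,b_e\ge 1$, the product $a_e b_e$ is largest at the balanced split, giving $a_e b_e\le\lfloor r(\G)^2/4\rfloor$, while $|e|-1\ge cr(\G)-1$; combining these, each term is at most $\lfloor r(\G)^2/4\rfloor/(cr(\G)-1)$, whence $\langle L_\G u,u\rangle\le n^2|\partial S|\,\lfloor r(\G)^2/4\rfloor/(cr(\G)-1)$. Substituting into $\lambda_2(L_\G)\|u\|^2\le\langle L_\G u,u\rangle$, cancelling one factor of $n$ and one factor of $n_S$, and finally using $|V\setminus S|/n\ge 1/2$, I obtain a lower bound for $h(\G)=|\partial S|/n_S$ of the stated shape, a constant multiple of $\lambda_2(L_\G)\,(cr(\G)-1)/\big(r(\G)(r(\G)-1)\big)$, with the factor $2$ emerging precisely from the $|V\setminus S|\ge n/2$ step. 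Equivalently, one may simply quote the earlier isoperimetric inequality $|\partial S|\ge \tfrac{cr(\G)-1}{\lfloor r(\G)^2/4\rfloor}\tfrac{\lambda_2(L_\G)|S||V\setminus S|}{n}$ and divide by $\min(|S|,|V\setminus S|)\le n/2$.

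I expect the only delicate point to be bookkeeping of the constant: the balanced-split maximum is $\lfloor r(\G)^2/4\rfloor$ rather than $r(\G)\big(r(\G)-1\big)/4$, so matching the exact coefficient in the statement requires care in how $a_e b_e$ and $|e|-1$ are bounded and in the rounding, and this is where I would check the algebra most carefully. Everything else---the orthogonality of $u$, the value of $\|u\|^2$, and the identification of the Dirichlet form with a sum over boundary edges---is routine and mirrors the classical graph Cheeger inequality.
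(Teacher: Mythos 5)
Your setup is fine and your computation is carried out correctly: the centered step function $u$ is orthogonal to the constants, $\|u\|^2=n\,n_S(n-n_S)$, and grouping crossing pairs by edges gives $\langle L_\G u,u\rangle=n^2\sum_{e\in\partial S}a_eb_e/(|e|-1)$. The problem is the final constant, and it is not a bookkeeping issue that more careful algebra can repair. Your per-edge estimate $a_eb_e\le\lfloor r(\G)^2/4\rfloor$, $|e|-1\ge cr(\G)-1$ yields $h(\G)\ge \lambda_2(L_\G)\,\frac{cr(\G)-1}{2\lfloor r(\G)^2/4\rfloor}$, and since $4\lfloor r^2/4\rfloor\ge r^2-1>r(r-1)$ for every $r\ge 2$, this is strictly weaker than the stated bound $2\lambda_2(L_\G)\frac{cr(\G)-1}{r(\G)(r(\G)-1)}$. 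Moreover the loss is structural: for an $r$-uniform hypergraph with a boundary edge split evenly across the cut and $|S|=|V\setminus S|$, your chain of inequalities is tight at the per-edge level, and the quantity you would need, namely $a_eb_e\le r(r-1)/4\cdot\frac{|e|-1}{cr-1}$, is simply false there ($\lfloor r^2/4\rfloor>r(r-1)/4$). The same remark applies to your proposed shortcut of quoting the earlier isoperimetric theorem, which carries the same $\lfloor r^2/4\rfloor$ constant. So the proposal proves a correct but strictly weaker inequality, not the theorem as stated.

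The paper reaches the stronger constant by a different accounting. It tests $\lambda_2$ against the two indicator functions $\mathbf{1}_S/\sqrt{|S|}$ and $\mathbf{1}_{\bar S}/\sqrt{|\bar S|}$ separately, and bounds each Dirichlet form asymmetrically: for the $S$-side an edge $e\in\partial S$ contributes at most $\frac{r(\G)-1}{cr(\G)-1}\cdot\frac{t_S(e)}{|S|}$ (using only that the number of endpoints of $e$ outside $S$ is at most $r(\G)-1$), and symmetrically for the $\bar S$-side. Adding the two resulting inequalities $\lambda_2\le\frac{r-1}{cr-1}\frac{|\partial S|\,t(S)}{|S|}$ and $\lambda_2\le\frac{r-1}{cr-1}\frac{|\partial S|\,t(\bar S)}{|\bar S|}$, and using $t_S(e)+t_{\bar S}(e)=|e|\le r(\G)$ together with $|S|\le|\bar S|$, gives $2\lambda_2\le\frac{r(r-1)}{cr-1}h(\G)$. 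It is the replacement of the symmetric bound $a_eb_e\le\lfloor r^2/4\rfloor$ by the pair of one-sided bounds $a_eb_e\le(r-1)a_e$ and $a_eb_e\le(r-1)b_e$ that produces $r(r-1)$ in place of $4\lfloor r^2/4\rfloor$; this is the idea missing from your argument. (One caveat worth noting: the paper applies $\lambda_2\le\langle L_\G u,u\rangle/\|u\|^2$ to indicator functions that are not orthogonal to the constants, so if you want a fully rigorous version you should center them, which costs a factor and is exactly where your single-function approach and the paper's two-function approach part ways.)
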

\begin{proof}
 Let $ S$ be a nonempty subset of $V$, such that, $h(\G) = |\partial S| / |S|$ and $|S| \le |V|$. Let us define a real-valued function $u$
  as
  \begin{displaymath}
u(i) = \left\{ \begin{array}{ll}
 \frac{1}{\sqrt{|S|}} & \textrm{if $i\in S$,}\\
  0 & \textrm{elsewhere.}
  \end{array} \right.
\end{displaymath}
Let us define $t_S (e) = |\{v : v\in e \cap S, e\in E \}|$ and $t(S) = \frac{\sum_{e \in \partial S} t_S(e)}{|\partial S|}$.
Now we have
 $
\frac{\langle L_\G u, u  \rangle}{||u||^2}  
 = \sum_{ e  \in E} \sum_{\substack{i\sim j,\\ \{i,j \}\in e}} \frac{1}{|e|-1} (u(i) - u(j))^2
  \le \sum_{ e  \in \partial S} \sum_{i\in e \cap S} \frac{r(\G)-1}{cr(\G)-1} u(i)^2
  = \frac{r(\G)-1}{cr(\G)-1} \sum_{ e  \in \partial S} \frac{t_S(e)}{|S|}.
$
 Thus we have 
 $ \lambda_2(L_\G) \le \frac{r(\G)-1}{cr(\G)-1}  \frac{|\partial S| t(S)}{|S|} $
 and similarly
 $\lambda_2(L_\G) \le \frac{r(\G)-1}{cr(\G)-1}  \frac{|\partial \bar{S} | t (\bar{S} ) }{| \bar{S} |},$
where $\bar{S} = V\setminus S$. Now from these two inequalities we have 
$
   \lambda_2(L_\G) \le   \frac{r(\G)\big(r(\G)-1\big)}{2(cr(\G)-1)} h(\G).
$
Thus the proof follows.
\end{proof}

\begin{thm}\label{CheegConstant:Lap:Thm2}
 Let $\G (V,E)$ be a connected hypergraph on $n$ vertices. If $d_{max}$ is the maximum degree of $\G$ and $\lambda_2 = \lambda_2(L_\G)$ then $$h(\G) < (r(\G)-1) \sqrt{(2d_{max} - \lambda_2)\lambda_2}.$$ 
\end{thm}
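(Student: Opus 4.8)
The plan is to transplant the ``easy half'' of the discrete Cheeger (Mohar) inequality to the underlying weighted graph $G[A_\G]$, writing $w_{ij}=(A_\G)_{ij}=\sum_{e\ni i,j}\frac{1}{|e|-1}$ so that $\langle L_\G f,f\rangle=\sum_{i\sim j}w_{ij}(f(i)-f(j))^2$ and $d_i=\sum_j w_{ij}$. First I would truncate the Fiedler function: let $g$ be an eigenfunction for $\lambda_2=\lambda_2(L_\G)$; since $g\perp u_1$ it takes both signs, so setting $V^+=\{i:g(i)>0\}$ and replacing $g$ by $-g$ if needed I may assume $|V^+|\le n/2$. Put $f=\max(g,0)$. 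The first key estimate is $\langle L_\G f,f\rangle\le\lambda_2\langle f,f\rangle$, which I would prove termwise: since $\langle g,f\rangle=\sum_{i\in V^+}g(i)^2=\langle f,f\rangle$ we get $\langle L_\G g,f\rangle=\lambda_2\langle f,f\rangle$, and for each adjacent pair one checks $(f(i)-f(j))^2\le(g(i)-g(j))(f(i)-f(j))$ (equality when both endpoints lie in or out of $V^+$, and with slack $-g(i)g(j)\ge0$ when exactly one does); summing against $w_{ij}$ gives the claim.

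Next I would introduce $B_f:=\sum_{i\sim j}w_{ij}\,|f(i)^2-f(j)^2|$, factor $f(i)^2-f(j)^2=(f(i)-f(j))(f(i)+f(j))$, and apply Cauchy--Schwarz to obtain
$$B_f\le\Big(\sum_{i\sim j}w_{ij}(f(i)-f(j))^2\Big)^{1/2}\Big(\sum_{i\sim j}w_{ij}(f(i)+f(j))^2\Big)^{1/2}.$$
The first factor is $\langle L_\G f,f\rangle^{1/2}$. For the second I would use the identity $\sum_{i\sim j}w_{ij}(f(i)+f(j))^2=2\sum_i d_i f(i)^2-\langle L_\G f,f\rangle\le(2d_{max}-x)\langle f,f\rangle$, where $x=\langle L_\G f,f\rangle/\langle f,f\rangle\le\lambda_2$ by the previous step. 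This gives $B_f\le\sqrt{x(2d_{max}-x)}\,\langle f,f\rangle$, and since $t\mapsto t(2d_{max}-t)$ is increasing on $[0,d_{max}]$ the bound upgrades to $B_f<\sqrt{(2d_{max}-\lambda_2)\lambda_2}\,\langle f,f\rangle$, the generic slack here accounting for the strict sign in the statement.

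Finally I would produce the matching lower bound on $B_f$ by a coarea/threshold argument. Writing $f(i)^2=\int_0^\infty\mathbf{1}[f(i)^2>\beta]\,d\beta$ and $S_\beta=\{i:f(i)^2>\beta\}\subseteq V^+$, the coarea identity yields $B_f=\int_0^\infty W(S_\beta)\,d\beta$ and $\langle f,f\rangle=\int_0^\infty|S_\beta|\,d\beta$, where $W(S)=\sum_{i\in S,\,j\notin S}w_{ij}$ is the weighted cut. Each edge $e\in\partial S_\beta$ contains at least one crossing pair, hence contributes at least $\frac{1}{|e|-1}\ge\frac{1}{r(\G)-1}$ to $W(S_\beta)$, so $W(S_\beta)\ge|\partial S_\beta|/(r(\G)-1)$; and since $|S_\beta|\le|V^+|\le n/2$, the definition of $h(\G)$ forces $|\partial S_\beta|\ge h(\G)\,|S_\beta|$. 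Integrating in $\beta$ gives $B_f\ge\frac{h(\G)}{r(\G)-1}\langle f,f\rangle$, and chaining with the upper bound yields $\frac{h(\G)}{r(\G)-1}<\sqrt{(2d_{max}-\lambda_2)\lambda_2}$, i.e. the theorem.

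I expect the main obstacle to be this last step: setting up the coarea formula cleanly for the hypergraph weights $w_{ij}$ and controlling the passage from the weighted cut $W(S_\beta)$ to the combinatorial edge boundary $|\partial S_\beta|$. This is precisely where the factor $r(\G)-1$ enters, and the deliberately lossy ``one crossing pair per edge'' estimate is what keeps the argument elementary. A secondary care point is the monotonicity interval in the middle step: one should confirm that $x\le\lambda_2\le d_{max}$ in the regime of interest (equivalently that the quadratic factorization $(\lambda_2-x)\big((2d_{max}-\lambda_2)-x\big)\ge0$ holds), or else retain the sharper bound $\sqrt{x(2d_{max}-x)}$ directly.
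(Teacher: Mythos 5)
Your proposal follows essentially the same route as the paper's proof, which is Mohar's argument: Cauchy--Schwarz applied to $\sum_{i\sim j} w_{ij}\,|f(i)^2-f(j)^2|$, the identity $\sum_{i\sim j} w_{ij}(f(i)+f(j))^2 = 2\sum_i d_i f(i)^2 - \langle L_\G f,f\rangle$, and a level-set lower bound in terms of $h(\G)$ (your coarea integral is just the continuous form of the paper's summation over the level values $t_k$). If anything your version is tidier: truncating to $f=\max(g,0)$ with $|V^+|\le n/2$ guarantees $|\partial S_\beta|\ge h(\G)|S_\beta|$ for every level set, whereas the paper works with $u=|u_2|$ on all of $V$ and needs (but does not verify) that its level sets $V_k$ satisfy $|V_k|\le n/2$; and the monotonicity caveat you flag at the end --- that $x(2d_{max}-x)\le\lambda_2(2d_{max}-\lambda_2)$ requires $x+\lambda_2\le 2d_{max}$, which can fail (e.g.\ for complete graphs) --- is a genuine issue that the paper's own final step, replacing $M$ by $\lambda_2$, also passes over silently.
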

\begin{proof}
Let $u_2$ be the eigenfunction with the eigenvalue $\lambda_2$, such that, $||u_2||=1$. Let $ \phi \ne S \subset V$, such that $h(\G)= \frac{|\partial S|}{|S|}$ and $|S|\le |V|/2$.
 Let $u: V \rightarrow \mathbb{R}$ be a function defined by
\begin{displaymath}
u(i) = |u_2(i)|  \textrm{ for all $i\in V$.}\\
\end{displaymath}

Thus,
\begin{eqnarray}
  \lambda_2   >     \sum_{i\sim j}  \sum_{\substack{e  \in E, \\ i,j\in e} } \frac{1}{|e|-1} (u(i) - u(j))^2   \ge \frac{1}{r(\G)-1} \sum_{i\sim j} d_{ij} (u(i) - u(j))^2 =: M \text{ (say)}.\label{cheegerUBound:eq1}
\end{eqnarray}
The rest of the proof is similar to the proof for graphs  \cite{mohar1989}.
Equation (\ref{cheegerUBound:eq1}), 
by using Cauchay-Schwarz inequality,
 implies that
\begin{eqnarray}
M & 
 \ge & \frac{1}{r(\G)-1} \frac{\Big(\sum_{i\sim j} d_{ij} |u^2(i) - u^2(j)|\Big)^2 }{\sum_{i\sim j} d_{ij} (u(i) + u(j))^2}. \label{cheegerUBound:eq2}
\end{eqnarray}
Now proceed in a similar way as in the proof given in \cite{mohar1989}.
Let $ t_0 < t_1 < \dots < t_h$ be all different values of $u(i)$, $i\in V$. For  $k = 0,1, \dots, h$, let us define $V_k := \{i\in V: u(i) \ge t_k \}$, and  we denote  $\delta_k(e) = \min\{|V_k \cap e|, |(V\setminus V_k) \cap e| \}$ for each edge $ e\in \partial V_k$ and $\delta(V_k) =\min_{e\in \partial V_k} \{\delta_k(e) \}$. Let $\delta(\G) = \min_{k\in [h]} \{  \delta(V_k)\}$, where $[h]= \{1,2,\dots, h\} $.
Now
\begin{eqnarray}
 \sum_{i\sim j} d_{ij} |u^2(i) - u^2(j)| & = &  \sum_{\substack{ i\sim j\\ u(i) \ge u(j)}} d_{ij} \big ( u^2(i) - u^2(j)\big)  \nonumber \\
 & = & \sum_{k=1}^{h} \sum_{\substack{i\sim j \\ u(i) = t_k\\ u(j)=t_l<t_k}} d_{ij} \big(t_k^2 - t_{k-1}^2 + t_{k-1}^2 - \dots - t_{l+1}^2 + t_{l+1}^2 - t_{l}^2 \big)     \nonumber \\ 
  & \ge &  \sum_{k=1}^{h} \delta (V_k) |\partial V_k| (t_k^2 - t_{k-1}^2) \nonumber \\ 
  & \ge & \delta (\G) h(\G)    \sum_{k=1}^{h} |V_k| (t_k^2 - t_{k-1}^2) \nonumber \\ 
  & = & \delta (\G) h(\G)    \sum_{k=1}^{n} u(k)^2.  \label{cheegerUBound:eq3}
\end{eqnarray}
On the other hand,
\begin{eqnarray}
 \sum_{i\sim j} d_{ij} (u(i) + u(j))^2 & = &   2\sum_{i\sim j} d_{ij} \big(u(i)^2 + u(j)^2\big) -  \sum_{i\sim j} d_{ij} \big(u(i) - u(j)\big)^2\nonumber \\
   & \le & 2 d_{max} (r(\G)-1) \sum_{i=1}^n u(i)^2 -  \sum_{i=1}^n u(i)^2 \sum_{i\sim j} d_{ij} \big(u(i) - u(j)\big)^2\nonumber \\ 
      & = & ( 2 d_{max} - M) (r(\G)-1) \sum_{i=1}^n u(i)^2 \text{, using Equation (\ref{cheegerUBound:eq1})} \label{cheegerUBound:eq4}.
\end{eqnarray}
Now, from Equations (\ref{cheegerUBound:eq2}), (\ref{cheegerUBound:eq3}) and (\ref{cheegerUBound:eq4}), we get
$$
M  >  \frac{\frac{1}{r(\G)-1}  \delta (\G)^2 h(\G)^2    \big(\sum_{k=1}^{n} u(k)^2\big)^2  }{( 2 d_{max} - M) (r(\G)-1) \sum_{i=1}^n u(i)^2} 
\ge   \frac{1}{(r(\G)-1)^2}   \frac{ h(\G)^2 }{( 2 d_{max} - M)} \text{, since $\delta (\G)^2 \ge 1$}$$

Hence, $\lambda_2 > \frac{ h(\G)^2 }{(r(\G)-1)^2}   \frac{1}{( 2 d_{max} - M)} \Rightarrow h(\G) < (r(\G)-1) \sqrt{(2d_{max} - \lambda_2)\lambda_2}.$
\end{proof}

\subsection{Diameter and eigenvalues of Laplacian matrix of a hypergraph}
%
\begin{thm}\label{Lap:Diam:LBound}
For a connected hypergraph $\G (V,E)$ on $n$ vertices,
$$diam(\G) \ge \frac{4}{n(r(\G)-1) \lambda_2(L_\G)}.$$
\end{thm}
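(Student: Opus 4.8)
The plan is to bound $\lambda_2(L_\G)$ from above by a quantity involving $diam(\G)$, and then to rearrange; this mirrors Mohar's argument for graphs but tracks the edge cardinalities through the factor $r(\G)-1$. First I would fix a unit eigenfunction $u$ for $\lambda_2 = \lambda_2(L_\G)$. Since $\G$ is connected we have $\lambda_1(L_\G)=0$ with constant eigenfunction $\mathbf{1}_n$, so $u\perp\mathbf{1}_n$, i.e. $\sum_{i\in V}u(i)=0$ and $\sum_{i\in V}u(i)^2=1$. Using the Rayleigh-quotient form already recorded in the excerpt, $\lambda_2=\langle L_\G u,u\rangle=\sum_{i\sim j}w_{ij}(u(i)-u(j))^2$, where I write $w_{ij}=\sum_{e\in E,\,i,j\in e}\frac{1}{|e|-1}$ for the adjacency weight of the pair $\{i,j\}$.

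Next I would control the \emph{spread} of $u$. Let $M=\max_i u(i)$ and $m=\min_i u(i)$, attained at vertices $p$ and $q$. For every $i$ the inequality $(u(i)-M)(u(i)-m)\le 0$ gives $u(i)^2\le (M+m)u(i)-Mm$; summing over $i\in V$ and using $\sum_i u(i)=0$ and $\sum_i u(i)^2=1$ yields $1\le -nMm$. Hence $(M-m)^2=(M+m)^2-4Mm\ge -4Mm\ge \frac{4}{n}$, which is the lower bound on the spread that I need.

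Then I would connect the spread to the diameter along a shortest $p$--$q$ path $p=v_0 e_1 v_1\cdots e_l v_l=q$ with $l=d(p,q)\le diam(\G)$. Telescoping together with Cauchy--Schwarz over the $l$ path-steps gives $(M-m)^2=\big(\sum_{k=1}^{l}(u(v_{k-1})-u(v_k))\big)^2\le l\sum_{k=1}^{l}(u(v_{k-1})-u(v_k))^2$. Because $v_{k-1},v_k\in e_k$ and $|e_k|\le r(\G)$, the adjacency weight satisfies $w_{v_{k-1}v_k}\ge \frac{1}{|e_k|-1}\ge \frac{1}{r(\G)-1}$, so each squared increment obeys $(u(v_{k-1})-u(v_k))^2\le (r(\G)-1)\,w_{v_{k-1}v_k}(u(v_{k-1})-u(v_k))^2$. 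Since the path visits distinct vertices, the pairs $\{v_{k-1},v_k\}$ are distinct, so the sum of these bounded terms is dominated by the full Rayleigh sum: $\sum_{k=1}^{l}(u(v_{k-1})-u(v_k))^2\le (r(\G)-1)\lambda_2$. Combining the three estimates gives $\frac{4}{n}\le (M-m)^2\le diam(\G)\,(r(\G)-1)\,\lambda_2$, and rearranging proves the theorem.

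The routine parts are the telescoping/Cauchy--Schwarz and the elementary spread estimate. The step that deserves the most care is the passage from path increments to the global Rayleigh sum: I must ensure the path-edge pairs are genuinely distinct (guaranteed by the definition of a path as a sequence of distinct vertices) so that no weighted term is overcounted, and I must invoke $|e_k|\le r(\G)$ at exactly the right place to produce the clean factor $r(\G)-1$ rather than a per-edge quantity. If a pair $\{v_{k-1},v_k\}$ happens to lie in several edges the weight $w_{v_{k-1}v_k}$ only increases, which strengthens the bound, so no difficulty arises there.
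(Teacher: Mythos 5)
Your proof is correct, and it reaches the bound by a route that is structurally different from the paper's even though the two share the same key ingredient. The paper's proof is a two-step reduction: since every adjacency weight satisfies $\sum_{e\ni i,j}\frac{1}{|e|-1}\ge\frac{1}{r(\G)-1}$, the Rayleigh quotient of the hypergraph Laplacian at $u_2$ dominates $\frac{1}{r(\G)-1}$ times the Rayleigh quotient of the Laplacian of the underlying unweighted graph $G_0[A_\G]$, whence $\lambda_2(L_\G)\ge\frac{1}{r(\G)-1}\lambda_2(L_{G_0[A_\G]})$; the graph-theoretic inequality $\lambda_2(L_{G_0})\ge\frac{4}{n\,diam(G_0)}$ is then quoted as Theorem 4.2 of Mohar (1991), and $diam(\G)=diam(G_0[A_\G])$ finishes the argument. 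You never introduce the auxiliary graph $G_0[A_\G]$: instead you work directly with the eigenfunction of $L_\G$ and inline a complete proof of the Mohar-type estimate --- the spread bound $(M-m)^2\ge 4/n$ from $\sum_i u(i)=0$, $\sum_i u(i)^2=1$, and the telescoping/Cauchy--Schwarz bound along a shortest path, inserting the weight lower bound $w_{v_{k-1}v_k}\ge\frac{1}{r(\G)-1}$ at the per-step level. Your attention to the distinctness of the consecutive pairs on the path (so that the partial Rayleigh sum is dominated by the full one) is exactly the point that makes the inlined argument airtight. What the paper's version buys is brevity via a citation; what yours buys is a self-contained proof that makes the provenance of each factor ($4/n$ from the spread, $diam$ from Cauchy--Schwarz, $r(\G)-1$ from the minimum edge weight) completely explicit.
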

\begin{proof}
Let us consider the eigenfunction $u_2$ with the eigenvalue  $\lambda_2(L_\G)$.  Then we have
 \begin{eqnarray*} 
\lambda_2(L_\G)  & = &  \frac{  \sum_{i\sim j} \sum_{\substack{e  \in E, \\ i,j\in e} } \frac{1}{|e|-1} (u_2(i) - u_2(j))^2 }{ ||u_2||^2 }\\
  { } & \ge & \frac{1}{r(\G)-1} \lambda_2(L_{G_0[A_\mathcal{G}]})\\
 { } & \ge &     \frac{4}{n(r(\G)-1)diam({G}_0[A_\mathcal{G}])},   \text{ (using Theorem 4.2 in \cite{mohar1991}) }.
\end{eqnarray*}
Since  $diam(\G) = diam({G}_0[A_\mathcal{G}])$, thus the proof follows.
\end{proof}
 Distance $d(V_1, V_2)$ between two nonempty proper subsets, $V_1$ and $V_2$, of the vertex set of a hypergraph 
 is defined as $$ d(V_1, V_2) := \min \{d(i,j): i\in V_1, j\in V_2 \} .$$
\begin{lemma}
Let $M$ denote an $n \times n$ matrix with rows and columns indexed by the vertices of a graph $G$ and let $M(i,j)=0$ if $i,j$ are not adjacent. Now, if for some integer $t$ and some polynomial $p_t(x)$ of degree $t$, $(p_t(M))_{ij}\ne 0$ for any $i$ and $j$, then $diam(G)\le t$.
\end{lemma}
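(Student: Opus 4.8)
The plan is to reduce the statement to a single combinatorial fact about the entries of powers of $M$, namely that $(M^k)_{ij}=0$ whenever the graph distance $d(i,j)$ exceeds $k$. Once this is in hand the conclusion follows immediately by contraposition: if $diam(G)>t$, then some pair $i,j$ has $d(i,j)>t$, every monomial $(M^k)_{ij}$ with $0\le k\le t$ vanishes for that pair, and hence so does $(p_t(M))_{ij}$, contradicting the hypothesis that every entry of $p_t(M)$ is nonzero.

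First I would prove the key fact by unwinding the matrix product combinatorially. Writing $(M^k)_{ij}=\sum M(i,v_1)M(v_1,v_2)\cdots M(v_{k-1},j)$, where the sum runs over all sequences $i=v_0,v_1,\dots,v_k=j$ of vertices, I observe that a summand is nonzero only if every consecutive factor $M(v_l,v_{l+1})$ is nonzero. Since $M(a,b)=0$ whenever $a,b$ are nonadjacent, a nonzero summand forces $v_0v_1\cdots v_k$ to be a walk of length $k$ in $G$ joining $i$ to $j$. But no such walk exists when $d(i,j)>k$, so every summand vanishes and $(M^k)_{ij}=0$. For $k=0$ the identity $(M^0)_{ij}=\delta_{ij}=0$ for $i\ne j$ is even more direct, which takes care of the constant term of the polynomial.

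Then I would assemble the proof. Suppose, for contradiction, that $diam(G)>t$, and pick $i\ne j$ with $d(i,j)>t$. Expanding $p_t(x)=\sum_{k=0}^{t}c_kx^k$ gives $(p_t(M))_{ij}=\sum_{k=0}^{t}c_k(M^k)_{ij}$. By the key fact each term vanishes, since $d(i,j)>t\ge k$ for every $k$ in the range, so $(p_t(M))_{ij}=0$, contradicting the assumption that $(p_t(M))_{ij}\ne 0$ for all $i,j$. Hence $diam(G)\le t$.

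The only real content is the walk-counting observation; everything else is bookkeeping. The main (and only mild) obstacle is to state carefully that nonvanishing of a product of entries forces a genuine walk in $G$, together with the fact that $p_t$ has degree at most $t$ so that no power $M^k$ with $k>t$ enters the sum. I expect no hidden difficulty here, since the argument uses the hypothesis $M(i,j)=0$ for nonadjacent $i,j$ only through this single reduction.
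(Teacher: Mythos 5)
Your proof is correct; the paper states this lemma without any proof (it is the standard fact underlying Chung-style diameter--eigenvalue bounds), and your walk-counting argument --- $(M^k)_{ij}=0$ whenever $d(i,j)>k$, hence every term of $(p_t(M))_{ij}$ vanishes for a pair at distance greater than $t$ --- is exactly the intended one. The only micro-point worth making explicit is that the lemma is applied to matrices such as $L_\mathcal{G}$ whose diagonal entries are nonzero, so a nonvanishing summand yields a lazy walk (consecutive vertices equal or adjacent) rather than a genuine walk; but a lazy walk of length $k$ from $i$ to $j$ still forces $d(i,j)\le k$, so your argument goes through unchanged.
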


\begin{thm}\label{Lap:Diam:UBound}
Let $\G (V,E)$ be an  connected hypergraph on $n$ vertices  with at least one pair of nonadjacent vertices. Then, for $V_1, V_2 \subset V$ such that $ V_2 \ne V_1 \ne V\setminus V_2$, we have
$$d(V_1, V_2) \le \Bigg\lceil  \frac{\log\sqrt{\frac{(n-|V_1|)(n-|V_2|)}{|V_1||V_2|}}}{\log\Big(\frac{\lambda_n(L_\G) + \lambda_2(L_\G)}{\lambda_n(L_\G)- \lambda_2(L_\G)}\Big)}     \Bigg   \rceil.$$
\end{thm}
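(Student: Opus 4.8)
The plan is to reduce the statement to a polynomial estimate and then apply the preceding Lemma with $M = L_\G$. The key observation is that $(L_\G)_{ij} = 0$ whenever $i \ne j$ and $i \not\sim j$, so $L_\G$ has exactly the sparsity pattern the Lemma requires, and any walk through its nonzero off-diagonal entries is a walk in $\G$. Hence it suffices to exhibit, for a suitable degree $t$, a polynomial $p_t$ of degree $t$ for which $(p_t(L_\G))_{ij} \ne 0$ for some $i \in V_1$, $j \in V_2$; by the walk-counting argument behind the Lemma this forces $d(V_1,V_2) \le d(i,j) \le t$. Writing $\chi_1,\chi_2$ for the characteristic vectors of $V_1,V_2$, I would realize this through the inner product
$$\langle p_t(L_\G)\chi_1, \chi_2\rangle = \sum_{i\in V_1,\, j\in V_2} (p_t(L_\G))_{ij},$$
since a nonzero total forces at least one nonzero summand.

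First I would expand in an orthonormal eigenbasis $u_1,\dots,u_n$ of $L_\G$ with eigenvalues $0 = \lambda_1 \le \lambda_2 \le \cdots \le \lambda_n$; connectedness guarantees that $\lambda_1 = 0$ is simple with $u_1 = \mathbf{1}/\sqrt{n}$. Then
$$\langle p_t(L_\G)\chi_1, \chi_2\rangle = \sum_{k=1}^{n} p_t(\lambda_k)\,\langle \chi_1, u_k\rangle\,\langle \chi_2, u_k\rangle,$$
whose $k=1$ term equals $p_t(0)\,|V_1||V_2|/n$. The remaining sum I would control by Cauchy--Schwarz together with the identity $\sum_{k\ge 2}\langle \chi_i, u_k\rangle^2 = \|\chi_i\|^2 - |V_i|^2/n = |V_i|(n-|V_i|)/n$, which yields
$$\Big|\sum_{k\ge 2} p_t(\lambda_k)\langle \chi_1,u_k\rangle\langle \chi_2,u_k\rangle\Big| \le \Big(\max_{\lambda\in[\lambda_2,\lambda_n]}|p_t(\lambda)|\Big)\frac{\sqrt{|V_1|(n-|V_1|)\,|V_2|(n-|V_2|)}}{n}.$$
Thus the inner product is nonzero as soon as the $k=1$ term strictly dominates, i.e. as soon as $p_t(0)=1$ and $\max_{[\lambda_2,\lambda_n]}|p_t| < \sqrt{|V_1||V_2|/\big((n-|V_1|)(n-|V_2|)\big)}$.

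The final step is to choose $p_t$ making the maximum on $[\lambda_2,\lambda_n]$ small subject to $p_t(0)=1$. I would take the scaled power
$$p_t(\lambda) = \Big(1 - \frac{2\lambda}{\lambda_n + \lambda_2}\Big)^t,$$
which satisfies $p_t(0)=1$ and $\max_{[\lambda_2,\lambda_n]}|p_t| = \big((\lambda_n-\lambda_2)/(\lambda_n+\lambda_2)\big)^t$. Substituting, the dominance condition becomes $t\,\log\frac{\lambda_n+\lambda_2}{\lambda_n-\lambda_2} > \log\sqrt{\frac{(n-|V_1|)(n-|V_2|)}{|V_1||V_2|}}$, whose least integer solution is precisely the stated ceiling. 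The hypotheses fall into place here: the assumed pair of nonadjacent vertices makes the underlying graph non-complete, hence $\lambda_2 < \lambda_n$, so the base $\frac{\lambda_n+\lambda_2}{\lambda_n-\lambda_2}$ exceeds $1$ and its logarithm is positive, while the conditions $V_1\ne V_2$ and $V_1 \ne V\setminus V_2$ keep the numerator positive so that the bound is non-degenerate. The main obstacle I anticipate is the selection of the extremal polynomial and the clean geometric-decay estimate of $\max_{[\lambda_2,\lambda_n]}|p_t|$ (a Chebyshev-type choice would sharpen the constant, but the simple power already delivers the asserted form); a secondary point to verify carefully is that the per-entry consequence of the Lemma genuinely transfers the walk bound for $L_\G$ into a distance bound in the hypergraph $\G$.
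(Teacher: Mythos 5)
Your proposal follows essentially the same route as the paper's proof: the same (suitably normalized) characteristic functions, the same spectral expansion with $f_1=\mathbf{1}_n/\sqrt{n}$, the same Cauchy--Schwarz control of the $k\ge 2$ terms via $\sum_{k\ge2}\langle\chi_i,u_k\rangle^2=|V_i|(n-|V_i|)/n$, and the identical polynomial $p_t(x)=\bigl(1-\tfrac{2x}{\lambda_2+\lambda_n}\bigr)^t$. The one point you gloss over is the boundary case: your dominance condition requires the strict inequality $t\log\frac{\lambda_n+\lambda_2}{\lambda_n-\lambda_2}>\log\sqrt{\tfrac{(n-|V_1|)(n-|V_2|)}{|V_1||V_2|}}$, whose least integer solution is $\lceil X\rceil+1$ rather than $\lceil X\rceil$ when the threshold $X$ is itself an integer; the paper closes this by observing that the Cauchy--Schwarz step is \emph{strict} precisely because $V_1\ne V_2$ and $V_1\ne V\setminus V_2$ (equality would force the coefficient sequences to be proportional), so $t\ge X$ already makes the inner product positive --- note that this, not ``keeping the numerator positive,'' is the actual role of those two hypotheses.
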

\begin{proof}
 For $V_1\subset V$, let us construct a function $u_{V_1}$ as
  \begin{displaymath}
u_{V_1}(i) = \left\{ \begin{array}{ll}
1/\sqrt{|V_1|} & \textrm{if $i\in V_1$,}\\
  0 & \textrm{elsewhere.}
  \end{array} \right.
\end{displaymath}
Let $f_i$ be orthonormal eigenfunctions of $L_\G$, such that $L_\G f_i = \lambda_i (L_\G)f_i $, for $i = 1,\dots, n$. Then we have $u_{V_1} = \sum_{i=1}^{n}  a_i f_i.$
Let us take $f_1 = \frac{1}{\sqrt{n}} \mathbf{1}_n$. Then $a_1 =\langle u_{V_1}, f_1  \rangle = \sqrt{\frac{|V_1|}{n}}.$
Similarly, for  $V_2\subset V$, we construct a function $u_{V_2} = \sum_{i=1}^{n}  b_i f_i,$ where $b_1 = \sqrt{\frac{|V_2|}{n}}.$

Now, choose a polynomial $p_t(x)= \Big( 1 - \frac{2x}{\lambda_2(L_\G) + \lambda_n(L_\G)} \Big)^t$. Clearly 
$|p_t (\lambda_i(L_\G))| \le (1-\lambda)^t ,$ for all $i = 2, 3, \dots, n,$ where $\lambda = \frac{2 \lambda_2(L_\G)}{\lambda_2(L_\G) +  \lambda_n(L_\G)  }$.
If $ \langle u_{V_2}, p_t(L_\G) u_{V_1} \rangle >0 $ for some $t$, then there is a path of length  at most $t$ between a vertex in $V_1$ and a vertex in $V_2$.
Thus,
\begin{eqnarray}
 \langle u_{V_2}, p_t(L_\G) u_{V_1} \rangle  & = &  p_t (0) a_1 b_1 + \sum_{i=2}^{n} p_t (\lambda_i(L_\G)) a_i b_i   \nonumber \\
 & \ge &  \frac{\sqrt{|V_1||V_2|}}{n}  - \bigg| \sum_{i=2}^{n} p_t (\lambda_i(L_\G)) a_i b_i  \bigg  |  \nonumber \\ 
  & \ge & \frac{\sqrt{|V_1||V_2|}}{n} - (1-\lambda)^t \sqrt{\sum_{i=2}^{n}a_i^2   \sum_{i=2}^{n}b_i^2  }  \label{DiamUBound:eq0} \\
 & = &  \frac{\sqrt{|V_1||V_2|}}{n} - (1-\lambda)^t \frac{\sqrt{(n-|V_1|) (n-|V_2|)}}{n}.   
  \label{DiamUBound:eq1}
\end{eqnarray}
The inequality (\ref{DiamUBound:eq0}) follows from Cauchy-Schwarz inequality, whereas the  equality (\ref{DiamUBound:eq1}) holds since
 $\sum_{i=2}^{n} a_i^2 = ||u_{V_1}||^2 - (|\langle u_{V_1}, f_1   \rangle |)^2 = \frac{n - |V_1|}{n}
 ,$
 and  $\sum_{i=2}^{n} b_i^2 =  \frac{n - |V_2|}{n}
 .$
 
The inequality in (\ref{DiamUBound:eq0}) is strict. This is because the equality in Cauchy-Schwarz inequality
holds if and only if $a_i = c b_i$, for all $i$, for some constant $c$. However, it is  possible only when $V_1 = V_2$ or $V_1 = V \setminus V_2$, which is not the case here. So, we get
$ \langle u_{V_2}, p_t(L_\G) u_{V_1} \rangle >  \frac{\sqrt{|V_1||V_2|}}{n} - (1-\lambda)^t \frac{\sqrt{(n-|V_1|) (n-|V_2|)}}{n}
.$
Now,  if we choose $$t \ge    \frac{\log\sqrt{\frac{(n-|V_1|)(n-|V_2|)}{|V_1||V_2|}}}{\log\Big(\frac{\lambda_n(L_\G) + \lambda_2(L_\G)}{\lambda_n(L_\G)- \lambda_2(L_\G)}\Big)},$$ $ \langle u_{V_2}, p_t(L_\G) u_{V_1} \rangle$  becomes strictly positive.
  Thus the proof follows.
\end{proof}

\begin{corollary}
	For a connected hypergraph $\G (V,E)$ on $n$ vertices and with at least one pair of nonadjacent vertices, 
	$$diam(\G) \le	\Bigg\lceil   \frac{\log(n-1)}{\log\Big(\frac{\lambda_n(L_\G) + \lambda_2(L_\G)}{\lambda_n(L_\G)- \lambda_2(L_\G)}\Big)}     \Bigg   \rceil.$$ 
\end{corollary}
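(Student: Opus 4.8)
The plan is to obtain this bound as a direct specialization of Theorem~\ref{Lap:Diam:UBound} to singleton subsets of $V$. Since the diameter is realized by some pair of vertices, I would first pick $i, j \in V$ with $d(i,j) = diam(\G)$ and set $V_1 = \{i\}$ and $V_2 = \{j\}$. With this choice, $d(V_1, V_2) = \min\{d(i,j)\} = d(i,j) = diam(\G)$, so the left-hand side of the inequality in Theorem~\ref{Lap:Diam:UBound} becomes exactly $diam(\G)$.

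Next I would verify that these singletons satisfy the hypothesis $V_2 \ne V_1 \ne V \setminus V_2$ required by Theorem~\ref{Lap:Diam:UBound}. The assumption that $\G$ has at least one pair of nonadjacent vertices forces $diam(\G) \ge 2$, hence $i \ne j$ and $n \ge 3$. Therefore $V_1 = \{i\} \ne \{j\} = V_2$, and since $V \setminus V_2 = V \setminus \{j\}$ has $n - 1 \ge 2$ elements it cannot equal the singleton $V_1$. Thus the hypothesis holds and the theorem applies.

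Finally, I would substitute $|V_1| = |V_2| = 1$ into the right-hand side. The radicand collapses to $\frac{(n-1)(n-1)}{1 \cdot 1} = (n-1)^2$, so that $\log\sqrt{(n-|V_1|)(n-|V_2|)/(|V_1||V_2|)} = \log(n-1)$, while the denominator $\log\big((\lambda_n(L_\G) + \lambda_2(L_\G))/(\lambda_n(L_\G) - \lambda_2(L_\G))\big)$ is left unchanged. This yields precisely the stated bound on $diam(\G)$.

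There is no substantive obstacle here; the only point requiring care is confirming that the singleton choice meets the non-degeneracy condition $V_2 \ne V_1 \ne V \setminus V_2$, which is exactly what the nonadjacent-pair hypothesis guarantees.
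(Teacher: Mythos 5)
Your proposal is correct and matches the intended derivation: the paper states this as an immediate corollary of Theorem \ref{Lap:Diam:UBound}, obtained exactly by taking $V_1$ and $V_2$ to be distinct singletons realizing the diameter, so that $d(V_1,V_2)=diam(\G)$ and the radicand collapses to $(n-1)^2$. Your check that the nonadjacency hypothesis forces $n\ge 3$ and hence the non-degeneracy condition $V_2 \ne V_1 \ne V\setminus V_2$ is the right point of care.
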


\begin{corollary}
	Let $\G (V,E)$ be an  connected hypergraph  on $n$ vertices and with at least one pair of nonadjacent vertices. Then, for any $S \subset V$, we have
	$$\frac{|\delta S|}{|S|} \ge (n-|S|) \frac{1 - (1-\lambda)^2}{(1-\lambda)^2 (n-|S|) + |S|} ,$$
	where $\lambda = \frac{2 \lambda_2(L_\G)}{\lambda_n(L_\G) + \lambda_2(L_\G)}$ and $\delta S = \{j\in V\setminus S:d(i,j)=1, \text{ for some } i\in S \}$ is the vertex boundary of $S$. Moreover, if $|S| \le n/2$ then we have
	$$\frac{|\delta S|}{|S|} \ge \frac{2\lambda_n(L_\G) \lambda_2(L_\G)}{\lambda_n(L_\G)^2 + \lambda_2(L_\G)^2} .$$
\end{corollary}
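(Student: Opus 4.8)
The plan is to derive both bounds from the $t=1$ instance of the estimate established inside the proof of Theorem~\ref{Lap:Diam:UBound}. Write $s = |S|$ and $b = |\delta S|$, and set $V_1 = S$ together with $V_2 = V \setminus (S \cup \delta S)$, the set of vertices lying at distance at least $2$ from $S$. By the very definition of $\delta S$, no vertex of $V_2$ is adjacent to any vertex of $S$, so $d(V_1,V_2) \ge 2$. I keep the same functions $u_{V_1}, u_{V_2}$ and the same polynomial $p_t(x) = \big(1 - \tfrac{2x}{\lambda_2(L_\G) + \lambda_n(L_\G)}\big)^t$ as in that proof, but now evaluated at $t=1$.

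The first key step is to observe that $\langle u_{V_2}, p_1(L_\G) u_{V_1}\rangle = 0$. Indeed $(p_1(L_\G))_{ij} = \delta_{ij} - \tfrac{2}{\lambda_2(L_\G) + \lambda_n(L_\G)}(L_\G)_{ij}$, and for $i \in V_1$, $j \in V_2$ the two sets are disjoint (so $\delta_{ij}=0$) while $(L_\G)_{ij} = 0$ because $i$ and $j$ are nonadjacent; this is exactly the locality property of $L_\G$ underlying the Lemma preceding Theorem~\ref{Lap:Diam:UBound}. Feeding this into the spectral estimate culminating in~(\ref{DiamUBound:eq1}) with $t=1$ gives $0 \ge \tfrac{\sqrt{|V_1||V_2|}}{n} - (1-\lambda)\tfrac{\sqrt{(n-|V_1|)(n-|V_2|)}}{n}$, whence, after squaring, $|V_1||V_2| \le (1-\lambda)^2 (n-|V_1|)(n-|V_2|)$. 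Substituting $|V_1| = s$, $|V_2| = n-s-b$, $n-|V_1| = n-s$ and $n-|V_2| = s+b$, writing $\beta = (1-\lambda)^2$ and solving the resulting linear inequality for $b$ yields $\frac{b}{s} \ge (n-s)\frac{1-\beta}{\beta(n-s)+s}$, which is the first claim.

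For the \emph{moreover} part I would substitute $\lambda = \frac{2\lambda_2(L_\G)}{\lambda_n(L_\G) + \lambda_2(L_\G)}$, so that $1-\lambda = \frac{\lambda_n(L_\G) - \lambda_2(L_\G)}{\lambda_n(L_\G) + \lambda_2(L_\G)}$ and hence $1-\beta = \frac{4\lambda_n(L_\G)\lambda_2(L_\G)}{(\lambda_n(L_\G) + \lambda_2(L_\G))^2}$, $1+\beta = \frac{2(\lambda_n(L_\G)^2 + \lambda_2(L_\G)^2)}{(\lambda_n(L_\G) + \lambda_2(L_\G))^2}$; in particular $\frac{1-\beta}{1+\beta} = \frac{2\lambda_n(L_\G)\lambda_2(L_\G)}{\lambda_n(L_\G)^2 + \lambda_2(L_\G)^2}$, the target quantity. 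It then suffices to check that $s \mapsto (n-s)\frac{1-\beta}{\beta(n-s)+s}$ is at least $\frac{1-\beta}{1+\beta}$ on the range $s \le n/2$; dividing through by $1-\beta>0$ this reduces to $(n-s)(1+\beta) \ge s + \beta(n-s)$, i.e.\ to $n-s \ge s$, which is precisely the hypothesis $|S| \le n/2$.

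The main obstacle is handling degeneracies rather than the core computation. If $V_2 = \emptyset$ (every vertex outside $S$ lies in $\delta S$) the spectral estimate degenerates and must be treated directly, but there $b = n-s$ and $\frac{b}{s} = \frac{n-s}{s}$ easily dominates the claimed lower bound; similarly one must record that $\lambda_2(L_\G) < \lambda_n(L_\G)$ (so $\beta < 1$ and the division above is legitimate), which is guaranteed by connectivity together with the existence of a nonadjacent pair. One should also confirm that the strictness bookkeeping from Theorem~\ref{Lap:Diam:UBound} causes no trouble: I only use the non-strict estimate~(\ref{DiamUBound:eq1}) applied to the identity $\langle u_{V_2}, p_1(L_\G) u_{V_1}\rangle = 0$, so no appeal to the Cauchy--Schwarz strictness discussion is needed.
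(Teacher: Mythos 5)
Your proposal is correct and follows essentially the same route as the paper: take $V_1=S$, $V_2=V\setminus(S\cup\delta S)$, apply the $t=1$ case of the estimate (\ref{DiamUBound:eq1}) from Theorem \ref{Lap:Diam:UBound} to the vanishing inner product $\langle u_{V_2}, p_1(L_\G)u_{V_1}\rangle$, and rearrange; the "moreover" part is the same monotonicity observation under $|S|\le n/2$. Your extra care with the degenerate case $V_2=\emptyset$ and with justifying why the inner product vanishes only makes explicit what the paper leaves implicit.
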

\begin{proof}
Take $V_1 = S$, $V_2 = V \setminus S - \delta S$ and $t=1$. Then, using  Equation (\ref{DiamUBound:eq1}) of Theorem \ref{Lap:Diam:UBound}, we have 
$$
	0   =  \langle u_{V_2}, p_{t=1}(L_\G) u_{V_1} \rangle 
>    \frac{\sqrt{|V_1||V_2|}}{n} - (1-\lambda) \frac{\sqrt{(n-|V_1|) (n-|V_2|)}}{n}.
$$
Since $V \setminus V_2 = S \cup \delta S$, this implies 
$(1-\lambda)^2 (n- |S|) (|S| + |\delta S|) > |S| (n - |S| - |\delta S|).$
Thus the first part of the result follows. 

\noindent
Now, when $|S| \le n - |S|$, from the above inequality we get
 $$
\frac{|\delta S|}{|S|}  \ge  \frac{1 - (1-\lambda)^2}{(1-\lambda)^2  + |S|/ (n-|S|)}
\ge 	\frac{1 - (1-\lambda)^2}{(1-\lambda)^2  + 1} 	
=  \frac{2\lambda_n(L_\G) \lambda_2(L_\G)}{\lambda_n(L_\G)^2 + \lambda_2(L_\G)^2} .
$$\end{proof}

\subsection{Bounds on $\lambda_2(L_\G)$ and $\lambda_n(L_\G)$  of a  hypergraph $\G$}
Let $e\equiv \{i_1, i_2, \dots, i_m \} \in E$ be any edge in $\G$. Then, for $e$ and $u\in \mathbb{R}^n$, we define a  homogeneous polynomial of degree $2$  in $n$ variables by
$$L_\G(e)u^2 := \sum_{j=1}^{|e|}u(i_j)^2 - \frac{1}{|e|-1} \sum_{\substack{j=1,\\i_j\in e}}^{|e|} \sum_{\substack{k=1,\\i_j\ne i_k\in e}}^{|e|} u(i_j) u(i_k).$$
Thus, $\langle L_\G u, u \rangle = \sum_{e\in E} L_\G(e)u^2 .$
\begin{thm}
 Let $\G (V,E)$ be a connected hypergraph on $n(>2)$ vertices. Then 
 $$ \lambda_2(L_\G) \le \min \bigg\{ \frac{d_{i_1} + d_{i_2} + \dots + d_{i_{|e|}} - |e|}{|e|} : e \equiv \{i_1, i_2, \dots, i_{|e|} \} \in E \bigg\} .$$
\end{thm}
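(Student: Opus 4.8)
The plan is to bound $\lambda_2(L_\G)$ through its variational characterization. Since $\G$ is connected, the bottom eigenfunction $u_1$ is the constant function $\mathbf{1}$, and $\lambda_2(L_\G) = \inf\{\langle L_\G u, u\rangle/\langle u,u\rangle : 0\ne u \perp \mathbf{1}\}$. Hence it suffices to exhibit, for each fixed edge $e \equiv \{i_1,\dots,i_{|e|}\}$, a single admissible test function whose Rayleigh quotient is at most $(d_{i_1}+\dots+d_{i_{|e|}}-|e|)/|e|$; taking the minimum over all edges then yields the stated bound. The natural candidate is the function concentrated on $e$, namely the indicator $\mathbf{1}_e$ (corrected to be orthogonal to $\mathbf{1}$; see below).

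For the numerator I would use $L_\G = D_\G - A_\G$, which gives $\langle L_\G \mathbf{1}_e, \mathbf{1}_e\rangle = \sum_{i,j\in e}(L_\G)_{ij} = \sum_{j=1}^{|e|} d_{i_j} - \sum_{p\ne q}(A_\G)_{i_p i_q}$, where the last sum runs over ordered pairs of vertices of $e$. The key observation is that the edge $e$ \emph{itself} contributes $\tfrac{1}{|e|-1}$ to $(A_\G)_{i_p i_q}$ for each of the $|e|(|e|-1)$ ordered pairs, so
$$\sum_{p\ne q}(A_\G)_{i_p i_q} \;\ge\; |e|(|e|-1)\cdot\frac{1}{|e|-1} \;=\; |e|,$$
with every other edge through two vertices of $e$ only increasing this sum. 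Therefore $\langle L_\G \mathbf{1}_e, \mathbf{1}_e\rangle \le \sum_{j} d_{i_j} - |e|$, which is exactly the desired numerator. Equivalently one can read this off the single-edge form $\langle L_\G u, u\rangle = \sum_{e'\in E} L_\G(e')u^2$, since $L_\G(e)\mathbf{1}_e^2 = 0$ isolates the contribution of the remaining edges.

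The step I expect to be the main obstacle is the normalization forced by orthogonality to $u_1 = \mathbf{1}$. Because $\langle \mathbf{1}_e, \mathbf{1}\rangle = |e| \ne 0$, one cannot feed $\mathbf{1}_e$ directly into the formula for $\lambda_2$; the admissible test function is its projection $u = \mathbf{1}_e - \tfrac{|e|}{n}\mathbf{1}$ onto $\mathbf{1}^\perp$. Since $L_\G \mathbf{1}=0$, the numerator is unaffected, $\langle L_\G u, u\rangle = \langle L_\G \mathbf{1}_e, \mathbf{1}_e\rangle$, but the denominator becomes $\|u\|^2 = \tfrac{|e|(n-|e|)}{n}$ rather than $|e|$, and it even degenerates to $0$ when $e = V$. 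Reconciling this denominator with the clean value $|e|$ is precisely where the constant in the final bound is pinned down, so controlling the discrepancy between $\|u\|^2$ and $|e|$ (equivalently, the factor $\tfrac{n-|e|}{n}$) is the crux of the argument and the point I would scrutinize most carefully.
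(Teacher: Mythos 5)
Your computation of the numerator is correct and coincides with the paper's: with $u=\mathbf{1}_e$ one gets $\langle L_\G u,u\rangle=\sum_{j}d_{i_j}-\sum_{p\ne q}(A_\G)_{i_pi_q}\le \sum_{j}d_{i_j}-|e|$, the edge $e$ itself supplying the $-|e|$. The genuine gap is exactly the step you flagged and then left open, and it cannot be closed. After projecting onto $\mathbf{1}^{\perp}$ the denominator is $\|u\|^{2}=\tfrac{|e|(n-|e|)}{n}$, which is strictly smaller than $|e|$, so the variational principle yields only
$$\lambda_2(L_\G)\;\le\;\frac{n}{n-|e|}\cdot\frac{d_{i_1}+\cdots+d_{i_{|e|}}-|e|}{|e|},$$
a genuinely weaker bound (and a vacuous one when $e=V$). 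There is no way to reconcile $\tfrac{|e|(n-|e|)}{n}$ with $|e|$, because the inequality as stated is false. For the single-edge $3$-uniform hypergraph $K^3_3$ the right-hand side is $(1+1+1-3)/3=0$ while $\lambda_2(L_\G)=3/2$; for the $3$-uniform hypergraph on $\{1,2,3,4\}$ with edges $\{1,2,3\}$ and $\{2,3,4\}$ the right-hand side is $2/3$ while the spectrum of $L_\G$ is $\{0,1,2,3\}$, so $\lambda_2(L_\G)=1$. Both examples are consistent with the corrected bound carrying the extra factor $n/(n-|e|)$.

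For comparison, the paper's own proof takes the unprojected unit vector $u=|e|^{-1/2}\mathbf{1}_e$ and asserts $\lambda_2(L_\G)\le\sum_{e'\in E}L_\G(e')u^{2}$ without checking orthogonality to the constant eigenfunction $u_1$; that is, it silently commits precisely the error you anticipated (the bound $\lambda_2\le\langle L_\G u,u\rangle/\|u\|^2$ is simply not available for a test function with a nonzero component along $\mathbf{1}$). So your instinct about where the argument is fragile was exactly right, but the honest conclusion is not that the discrepancy can be controlled: the edge-indicator test function proves only the weaker inequality with the factor $n/(n-|e|)$, and the theorem in its stated form does not hold.
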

\begin{proof}
Let $e \equiv \{i_1, i_2, \dots, i_{|e|} \} \in E$ be any edge in $\G$. Now,  let us construct a function $u\in \mathbb{R}^n$, as
 \begin{displaymath}
u(i) = \left\{ \begin{array}{ll}
{|e|}^{-1/2} & \textrm{if $i\in e$,}\\
  0 & \textrm{elsewhere.}
  \end{array} \right.
\end{displaymath}
Now,
$
\lambda_2(L_\G)   \le   \sum_{e\in E} L_\G(e)u^2
   =   \frac{1}{|e|}(d_{i_1} + d_{i_2} + \dots + d_{i_{|e|}}) - \frac{1}{|e|-1}\cdot 2 {|e| \choose 2} \cdot \frac{1}{|e|}.
$
Thus the proof follows.
\end{proof}
\begin{corollary}
 Let $\G (V,E)$ be a  connected hypergraph with $n(>2)$ vertices. Then 
 $$ \lambda_n(L_\G) \ge \min \bigg\{ \frac{d_{i_1} + d_{i_2} + \dots + d_{i_{|e|}} - |e|}{|e|} : e\equiv\{i_1, i_2, \dots, i_{|e|} \} \in E \bigg\} .$$
\end{corollary}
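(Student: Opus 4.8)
The plan is to mirror the proof of the preceding Theorem but to invoke the variational characterization of the \emph{largest} eigenvalue rather than of the second smallest one. Recall from the discussion of $L_\G$ that $\lambda_n(L_\G) = \sup_{u\in L^2(\G)}\{\langle L_\G u, u\rangle : \|u\|=1\}$ and that $\langle L_\G u, u\rangle = \sum_{e\in E} L_\G(e)u^2$. The decisive structural point, which I would stress at the outset, is that this supremum is \emph{unconstrained}: unlike the characterization of $\lambda_2$, no orthogonality to the constant eigenfunction $u_1$ is imposed. Hence any single normalized test function immediately delivers a lower bound for $\lambda_n(L_\G)$, and the edge-supported function from the Theorem is admissible without further comment.

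First I would fix an arbitrary edge $e\equiv\{i_1,i_2,\dots,i_{|e|}\}\in E$ and reuse verbatim the test function of the Theorem, namely $u(i)=|e|^{-1/2}$ for $i\in e$ and $u(i)=0$ otherwise, which satisfies $\|u\|^2=|e|\cdot\tfrac1{|e|}=1$. Substituting $u$ into the Rayleigh quotient and running the identical computation as in the Theorem yields
$$\lambda_n(L_\G)\ \ge\ \langle L_\G u, u\rangle\ =\ \frac{1}{|e|}\big(d_{i_1}+\cdots+d_{i_{|e|}}\big)-\frac{1}{|e|-1}\cdot 2{|e|\choose 2}\cdot\frac{1}{|e|}\ =\ \frac{d_{i_1}+\cdots+d_{i_{|e|}}-|e|}{|e|}.$$
Since this inequality holds for \emph{every} edge $e\in E$, and $\lambda_n(L_\G)$ dominates each of the individual right-hand sides, it in particular dominates the smallest of them; taking the minimum over all edges gives the stated bound. (I would remark that the very same argument actually gives the stronger bound with $\min$ replaced by $\max$, but the minimal form suffices and is the exact counterpart of the Theorem.)

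I do not expect a genuine obstacle here, since the statement is the dual of the Theorem; the only point requiring care is the direction in which the Rayleigh quotient is deployed. The infimum characterization of $\lambda_2$ forced an upper bound, whereas the supremum characterization of $\lambda_n$ forces a lower bound, so the inequality flips from $\le$ to $\ge$ while the computed value of $\langle L_\G u, u\rangle$ is literally unchanged. The one place where the $\lambda_n$ argument is in fact \emph{cleaner} than its $\lambda_2$ analogue is precisely the absence of the orthogonality constraint: because the maximizing problem ranges over the whole unit sphere, the edge-supported test function is admissible outright, so no auxiliary correction of $u$ against $u_1$ is needed before reading off the bound.
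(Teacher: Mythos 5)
Your overall strategy (drop the orthogonality constraint, use the supremum characterization of $\lambda_n(L_\G)$, and reuse the edge-supported test function) is exactly the route the paper implicitly intends, since the corollary is stated without proof immediately after the theorem. However, there is a genuine gap in the step you call ``the identical computation'': the asserted equality $\langle L_\G u,u\rangle = \frac{1}{|e|}(d_{i_1}+\cdots+d_{i_{|e|}}) - 1$ is in general only the inequality ``$\le$''. Writing $e_0$ for the fixed edge, the cross term in $\sum_{e'\in E}L_\G(e')u^2$ equals $\sum_{e'\in E}\frac{1}{|e'|-1}\cdot\frac{|e'\cap e_0|\,(|e'\cap e_0|-1)}{|e_0|}$; the computation in the theorem retains only the contribution of $e'=e_0$ (which is $1$), but every other edge $e'$ sharing at least two vertices with $e_0$ adds a further positive amount, so in fact $\langle L_\G u,u\rangle \le \frac{\sum_{i\in e_0}d_i - |e_0|}{|e_0|}$, with equality only when no pair of vertices of $e_0$ lies in a second common edge. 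That direction of error is harmless in the theorem, where one wants an upper bound on $\lambda_2$, but it is fatal here: the test function certifies only $\lambda_n(L_\G) \ge \langle L_\G u,u\rangle$, which can be strictly smaller than the target quantity. Concretely, for $V=\{1,2,3,4\}$ and $E=\{\{1,2,3\},\{2,3,4\}\}$ one has $\frac{d_1+d_2+d_3-3}{3}=\frac{2}{3}$, while the test function supported on $\{1,2,3\}$ gives $\langle L_\G u,u\rangle=\frac{1}{3}$ (the pair $\{2,3\}$ lies in both edges); the other edge does no better. So the argument as written (and the parenthetical ``$\min$ replaced by $\max$'' strengthening) does not go through.

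The corollary itself is true and can be rescued with an even simpler test function: taking $u$ to be the indicator of a single vertex $i$ gives $\lambda_n(L_\G)\ge (L_\G)_{ii}=d_i$, hence $\lambda_n(L_\G)\ge \max_{i\in e}d_i \ge \frac{1}{|e|}\sum_{i\in e}d_i > \frac{1}{|e|}\big(\sum_{i\in e}d_i-|e|\big)$ for every edge $e$, in particular for the minimizing one. Your observation that no orthogonality to $u_1$ is required in the variational characterization of $\lambda_n$ is correct, and it is precisely what makes this one-vertex test function admissible.
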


\begin{thm}\label{Lap:Eig:UBound}
	Let $\G (V,E)$ be an $m(>2)$-uniform connected hypergraph with $n$ vertices. Then 
	$$\lambda_n(L_{\G}) \le \max \Bigg\{ \frac{2d_i (m-1) -1 + 
		\sqrt{4(m-1)^2 d_i m_i  D_{max}^2 - 2d_i (m-1) + 1}
		}	{2(m-1)} : i \in V \Bigg\} 	,$$
		where $m_i = (\sum_{j, j\sim i} d_j)/d_i(m-1)$ is the average $2$-degree of the vertex $i$ and $D_{max}= \max\{ d_{xy}: x,y\in V  \}$.
\end{thm}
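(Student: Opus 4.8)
The plan is to run the classical extremal-vertex argument for the largest Laplacian eigenvalue, adapted to the $m$-uniform setting in which every edge contributes the weight $\frac{1}{m-1}$ and in which $\sum_{j\sim i} d_{ij} = d_i(m-1)$ for each vertex $i$. Let $\lambda := \lambda_n(L_\G)$ and let $u$ be a corresponding eigenfunction, normalized by rescaling and, if necessary, replacing $u$ by $-u$ so that $u(p)=1=\max_{k\in V}|u(k)|$ at some vertex $p$; thus $|u(j)|\le 1$ for all $j$. Writing the eigenequation $L_\G u = \lambda u$ at $p$ and using $|e|=m$ gives the basic relation $(m-1)(\lambda-d_p) = -\sum_{j\sim p} d_{pj}\,u(j)$, which is the starting point.

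From here I would square this identity and apply the Cauchy--Schwarz inequality to $\sum_{j\sim p}d_{pj}u(j)$ with a pairing chosen so that a factor $\sum_{j\sim p}d_j$ is produced; bounding each codegree by $d_{pj}\le D_{max}$ then introduces the square factor $D_{max}^2$, and recalling $m_p = \big(\sum_{j\sim p}d_j\big)/\big(d_p(m-1)\big)$ one recognizes the quantity $(m-1)d_p m_p D_{max}^2$ on the right-hand side. The neighbour eigenequations $(m-1)(\lambda-d_j)u(j) = -\sum_{k\sim j}d_{jk}u(k)$ serve to control the remaining first-order terms; isolating the contribution of the coordinate $k=p$ (where $u(p)=1$ and $d_{jp}=d_{pj}$) is what I expect to generate the lower-order corrections $\lambda$ and $-\tfrac{d_p}{2}$. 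Collecting everything should yield a single quadratic inequality equivalent to $(m-1)(\lambda-d_p)^2 + \lambda - \tfrac{d_p}{2} \le (m-1)d_p m_p D_{max}^2$, that is, $(m-1)\lambda^2 - \big(2d_p(m-1)-1\big)\lambda + (m-1)d_p^2 - (m-1)d_p m_p D_{max}^2 - \tfrac{d_p}{2} \le 0$.

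Since this quadratic in $\lambda$ opens upward, the inequality forces $\lambda$ to lie below its larger root, which is exactly the displayed expression evaluated at $i=p$. As the estimate holds for the particular extremal vertex $p$, the value $\lambda_n(L_\G)$ is certainly at most $\max_{i\in V}$ of the same expression, which is the assertion of the theorem.

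The hardest part will be the second step: the precise bookkeeping of the double sum $\sum_{j\sim p} d_{pj}\sum_{k\sim j} d_{jk} u(k)$ arising from the neighbour equations, and the choice of the Cauchy--Schwarz pairing together with the estimates $d_{ij}\le D_{max}$ and $|u|\le 1$, so that they combine into exactly the stated coefficients rather than collapsing to a weaker Gershgorin-type bound $\lambda\le 2d_{max}$. In particular, the hypothesis $m>2$ (equivalently $m-1\ge 2$) must enter at the decisive estimate, since the analogue fails for ordinary graphs: for $P_3$ one has $\lambda_n=3$, which exceeds the value the formula would return at $m=2$. Thus the argument cannot be purely formal and must genuinely exploit $m-1\ge 2$ precisely where the $(m-1)^2$ factor under the radical is produced.
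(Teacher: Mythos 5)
Your proposal has a genuine gap at exactly the step you flag as ``the hardest part'': the decisive estimate is never carried out, and the extremal-vertex framework you set up is not well suited to producing it. With the normalization $u(p)=1=\max_k|u(k)|$, squaring $(m-1)(\lambda-d_p)=-\sum_{j\sim p}d_{pj}u(j)$ and applying Cauchy--Schwarz naturally yields a factor $\sum_{j\sim p}1=d_p(m-1)$ (the number of neighbours of $p$), not the factor $\sum_{j\sim p}d_j/d_p=(m-1)m_p$ that the theorem requires; since $(m-1)m_p$ can be much smaller than $d_p(m-1)$, the bound $|u(j)|\le 1$ gives you no way to trade $\sum_{j\sim p}u(j)^2$ for $(m-1)m_p$. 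Likewise, the $+\lambda$ correction (which you repackage as ``$\lambda-\tfrac{d_p}{2}$'') cannot be extracted by ``isolating the coordinate $k=p$'' in the neighbour equations: in the paper it comes from the \emph{global} identity $\sum_{i\sim j}d_{ij}(u(i)-u(j))^2=(m-1)\lambda\sum_i u(i)^2$, i.e.\ from the Rayleigh quotient of the eigenfunction over the whole hypergraph, which is simply not visible from a single vertex. The fact that your target quadratic is reverse-engineered to reproduce the stated radicand (and differs from the quadratic the paper actually derives) is a further sign that the algebra is being fitted to the answer rather than derived.

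For comparison, the paper's argument is global rather than extremal: it squares the eigenvalue equation at \emph{every} vertex $i$, sums over $i$, and expands each square via the Lagrange identity
$\bigl(\sum_{j\sim i}a_jb_j\bigr)^2=\sum_{j\sim i}a_j^2\sum_{j\sim i}b_j^2-\sum_{j<k}(a_jb_k-a_kb_j)^2$,
so that the main term $\sum_i d_i\sum_{j\sim i}u(j)^2=(m-1)\sum_i d_im_iu(i)^2$ produces the average $2$-degree, while the subtracted cross-term $\sum_i\sum_{j<k,\,j\sim i,k\sim i}(u(j)-u(k))^2$ is bounded below by $\sum_{i\sim j}d_{ij}(u(i)-u(j))^2=(m-1)\lambda\sum_iu(i)^2$, yielding the $+\lambda$ correction. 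This last lower bound is precisely where $m>2$ enters (you correctly sensed it must enter somewhere): in an $m$-uniform hypergraph with $m\ge 3$, any two adjacent vertices share a common neighbour inside each edge containing them, so every adjacent pair is represented in the triple sum; for $m=2$ this fails, consistent with your $P_3$ counterexample. The conclusion that \emph{some} vertex satisfies the quadratic inequality then follows because a sum of brackets weighted by $u(i)^2\ge 0$ is nonpositive. If you want to salvage your write-up, the cleanest fix is to abandon the extremal vertex and adopt this summed, Lagrange-identity version of the argument.
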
	
\begin{proof}
Let $u_n$ be an eigenfunction of $L_{\G}$ with the eigenvalue $\lambda_n(L_\G)$ ($ = \lambda_n(\G)$, say). Then we have
\begin{equation}  \label{UBound:eq1}
\sum_{i,j, i\sim j} d_{ij} (u_n(i) - u_n(j))^2 = (m-1) \lambda_n(\G) \sum_{i=1}^{n} u_n(i)^2.
\end{equation}
From the eigenvalue equation  of $L_\G$ (i.e., from (\ref{Lap:EigValEqu}))  for the vertex $i$  we have
$
(m-1) (d_i - \lambda_n(L_{\G})) u_n(i) \le  \sum_{j, j\sim i} D_{max} u_n(j).
$
Using Lagrange identity and  summing both sides over $i$  we get
\begin{equation} \label{UBound:eq2}
\sum_{i=1}^{n}  (m-1)^2 (d_i - \lambda_n(L_{\G}))^2 u_n(i)^2    \le  (m-1) \sum_{i=1}^{n}  d_i D_{max}^2 	 \sum_{j, j\sim i} u_n(j)^2 
- \sum_{i=1}^{n}  \sum_{ \substack{  1\le j < k \le n \\ j\sim i, k\sim i}} D_{max}^2  (u_n(j) - u_n(k))^2
\end{equation}
Now, since
\begin{equation*}
  \sum_{i=1}^{n}  d_i  \sum_{j, j\sim i} u_n(j)^2  =  (m-1)  \sum_{i=1}^{n}  d_i m_i  u_n(i)^2,
\end{equation*}
and 
\begin{eqnarray*}
\sum_{i=1}^{n}  \sum_{ \substack{  1\le j < k \le n \\ j\sim i, k\sim i}} D_{max}^2  (u_n(j) - u_n(k))^2 & \ge &
\sum_{i,j, i\sim j} d_{ij} (u_n(i) - u_n(j))^2\\
{ } & = & (m-1) \lambda_n(\G) \sum_{i=1}^{n}  u_n(i)^2 \text{ (using (\ref{UBound:eq1})), } 
\end{eqnarray*}
 Equation (\ref{UBound:eq2}) becomes
\begin{equation}\label{UBound:eq3}
 \sum_{i=1}^{n} \big[(m-1)^2 (d_i - \lambda_n(\G))^2  - (m-1)^2 d_i m_i D_{max}^2 + (m-1) \lambda_n(\G)  \big] u_n(i)^2 \le 0.
\end{equation}
Hence there exists a vertex $i$ for which we have
$
 (m-1) (d_i - \lambda_n(\G))^2  - (m-1) d_i m_i D_{max}^2 +   \lambda_n(\G)  \le 0.
$
This provides our desired result.
\end{proof}

\begin{corollary}
		Let $\G (V,E)$ be an $m(>2)$-uniform connected hypergraph on $n$ vertices. Then 
		$$\lambda_n(L_{\G}) \le   \frac{2d_{max} (m-1) -1 + 
			\sqrt{4(m-1)^2 d_{max}^2  |E|^2 - 2d_{min} (m-1) + 1}
		}	{2(m-1)},$$
		where $d_{max}$ and $d_{min}$ are the maximum and the minimum degrees, respectively, of $\G$.
\end{corollary}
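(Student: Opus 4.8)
The plan is to derive the corollary directly from Theorem \ref{Lap:Eig:UBound} by replacing each of the vertex-dependent quantities $d_i$, $m_i$ and $D_{max}$ appearing inside the maximum by the global quantities $d_{max}$, $d_{min}$ and $|E|$. Since the right-hand side of Theorem \ref{Lap:Eig:UBound} is a maximum over the vertices of $\G$, it suffices to bound, uniformly in $i$, the single-vertex expression
$$
g(i) := \frac{2d_i(m-1) - 1 + \sqrt{4(m-1)^2 d_i m_i D_{max}^2 - 2d_i(m-1) + 1}}{2(m-1)}
$$
by the claimed closed form, and then take the maximum over $i$.

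First I would record two elementary estimates. Every codegree $d_{xy}$ counts a subset of the at most $|E|$ edges of $\G$, so $D_{max} \le |E|$ and hence $D_{max}^2 \le |E|^2$. Second, I would establish the key inequality $d_i m_i \le d_{max}^2$. By the definition in the theorem, $d_i m_i = \frac{1}{m-1}\sum_{j \sim i} d_j$, where the sum runs over the distinct neighbours of $i$. Because $\G$ is $m$-uniform, each of the $d_i$ edges through $i$ contributes at most $m-1$ other vertices, so $i$ has at most $d_i(m-1)$ distinct neighbours; bounding each $d_j$ by $d_{max}$ gives $\sum_{j\sim i} d_j \le d_i(m-1)d_{max}$, whence $d_i m_i \le d_i d_{max} \le d_{max}^2$. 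Combining the two estimates yields $d_i m_i D_{max}^2 \le d_{max}^2|E|^2$.

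With these in hand I would treat each occurrence of $d_i$ separately so as to overestimate $g(i)$. In the leading term I use $2d_i(m-1) \le 2d_{max}(m-1)$; under the radical I enlarge the positive part via $d_i m_i D_{max}^2 \le d_{max}^2 |E|^2$ and enlarge the negative part via $-2d_i(m-1) \le -2d_{min}(m-1)$, which is valid because $d_i \ge d_{min}$. Each substitution only increases the value of $g(i)$, so for every vertex $i$,
$$
g(i) \le \frac{2d_{max}(m-1) - 1 + \sqrt{4(m-1)^2 d_{max}^2 |E|^2 - 2d_{min}(m-1) + 1}}{2(m-1)},
$$
and taking the maximum over $i$ on the left gives exactly the stated corollary.

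The only mildly delicate point, and the step I would be most careful about, is the neighbour-counting argument behind $d_i m_i \le d_{max}^2$: one must verify that "at most $d_i(m-1)$ distinct neighbours" is the correct count for an $m$-uniform hypergraph and that overcounting repeated neighbours across edges can only strengthen the inequality. Everything else is a chain of monotone substitutions into a fixed expression, so no genuine analytic difficulty arises.
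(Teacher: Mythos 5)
Your proposal is correct and follows essentially the same route as the paper's own (much terser) proof: both derive the corollary from Theorem \ref{Lap:Eig:UBound} via the bounds $d_i m_i = (\sum_{j\sim i} d_j)/(m-1) \le d_i d_{max} \le d_{max}^2$ and $D_{max}\le |E|$, together with the monotone replacements $d_i\le d_{max}$ in the leading term and $d_i\ge d_{min}$ in the negative term under the radical. Your write-up simply makes explicit the neighbour-counting and sign-checking steps that the paper leaves implicit.
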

\begin{proof}
	Since $d_im_i = (\sum_{j, j\sim i} d_j)/(m-1) \le d_{max}^2$ and $D_{max} \le |E|$ the result follows from the above theorem.
\end{proof}

%
Another upper bound of $\lambda_n(L_{\G})$ can also be found from the Theorem 5 in \cite{ROJO2007} as
$
\lambda_n(L_{\G}) \le \frac{1}{2} \max_{i\sim j} \Big\{d_i + d_j + \frac{1}{m-1} \Big(\sum_{k\sim i, k\nsim j}d_{ik} + \sum_{k\sim j, k\nsim i}d_{jk} +  \sum_{k\sim i, k\sim j} |d_{ik} - d_{jk} | \Big) \Big \}
.$

\section{Normalized Laplacian matrix and operator of a hypergraph}
Now we define normalized Laplacian operator and matrix $\Delta_\G$ for a  hypergraph $\G (V, E)$ on  $n$ vertices. Let $\mu$ be a natural measure on $V$ given by $\mu (i) = d_i$. We consider the inner product 
$\langle f_1,f_2 \rangle_{\mu} := \sum_{i\in V} \mu(i) f_1(i)f_2(i),$ for the $n$ dimensional Hilbert space $l^2(V, \mu)$, given by 
$ l^2(V, \mu) = \{f: V \rightarrow \mathbb{R} \}.$  Now our normalized Laplacian operator
$ \Delta_\G : l^2(V, \mu) \rightarrow l^2(V, \mu) $
 is  defined as 
\begin{equation}\label{NorLap:Def1}
(\Delta_\G f)(i) :=  \frac{1}{d_i}\sum_{j, i\sim j} \sum_{\substack{e  \in E, \\ i,j\in e} } \frac{1}{|e|-1}(f(i)-f(j)) =  f(i) -  \frac{1}{d_i}\sum_{j, i\sim j} \sum_{\substack{e  \in E, \\ i,j\in e} } \frac{1}{|e|-1} f(j).
\end{equation}
It is easy to verify that the eigenvalues of $\Delta_\G$ are real and nonnegative, since 
$ \langle  f_1, \Delta_\mathcal{G} f_2 \rangle_\mu = \langle \Delta_\mathcal{G} f_1,f_2 \rangle_\mu ,$ for all $f_1, f_2 \in \mathbb{R}^n$ and 
$\langle \Delta_\mathcal{G} f,f \rangle_\mu = \frac{1}{m-1} \sum_{ i\sim j} d_{ij} (f(i) - f(j))^2 \ge 0,$ for all $f \in \mathbb{R}^n$. 
The Rayleigh Quotient $\mathcal{R}_{\Delta_\G} (f)$  of a function $f: V\rightarrow \mathbb{R}$ is defined as 
\begin{equation}\label{NorLap:Def:RayQuo1}
\mathcal{R}_{\Delta_\G} (f) = \frac{\langle \Delta_\mathcal{G} f,f \rangle_\mu}{\langle f,f \rangle_\mu} = 
\frac{    \sum_{ i\sim j} \sum_{\substack{e  \in E, \\ i,j\in e} } \frac{1}{|e|-1} (f(i) - f(j))^2  }{\sum_{i\in V} d_if(i)^2}.
\end{equation}
For standard basis  we get the matrix form of normalized Laplacian operator $\Delta_\G$ as
\begin{equation}\label{NorLapMatrix:Def1}
(\Delta_\G)_{ij} = \left\{ \begin{array}{ll}
 1 & \textrm{if $i=j$,}\\
 \frac{-1}{d_i}\sum_{\substack{e  \in E, \\ i,j\in e} } \frac{1}{|e|-1} & \textrm{if $i\sim j$,}\\
 0 & \textrm{elsewhere.}
  \end{array} \right.
\end{equation}
So, $\Delta_\G = I - R_\G$, where $R_\G = A_\G D_\G^{-1}$ is  normalized adjacency matrix, which is a row-stochastic matrix. $R_\G$ can be considered as a
probability transition matrix of a random walk on   $\G$.

Now we order the eigenvalues of $\Delta_\G$ as $\lambda_1(\Delta_\G) \le \lambda_2(\Delta_\G) \le \dots \le \lambda_n(\Delta_\G)$ and find an orthonormal basis of $l^2(V, \mu)$ consisting of eigenfunctions of $\Delta_\G$, $u_k, k=1,\dots,n$, as we did it for Laplacian operator. 
The expression
$\lambda_1 (\Delta_\G) = \inf_{u\in l^2(V, \mu)  - \{ 0\}} \{\langle \Delta_\G u, u  \rangle_\mu : ||u||=1 \} $ provides $u_1$ and $\lambda_1(\Delta_\G)$. The rest of the eigenvalues are iteratively estimated from the expression
$\lambda_k(\Delta_\G) = \inf_{u\in \mathcal{H}_k - \{0\}} \bigg \{ \frac{\langle \Delta_\G u, u\rangle_\mu}{\langle u, u\rangle_\mu} \bigg \},$ where
$ \mathcal{H}_k := \{ v \in l^2(V, \mu) : \langle v, u_l\rangle_\mu =0 \text{ for }l\le k \}.$
$\lambda_2(\Delta_\G)$ can also be expressed as
$\lambda_2(\Delta_\G)  = \inf_{u \perp u_1} \bigg \{ \frac{\langle \Delta_\G u, u\rangle_\mu}{\langle u, u\rangle_\mu} \bigg \}
.$

We can also define  normalized Laplacian operator (and matrix) on a  hypergraph $\G (V,E)$ on $n$ vertices as follows. Here, we consider the usual inner product $\langle f_1,f_2 \rangle := \sum_{i\in V}  f_1(i)f_2(i),$ for the $n$ dimensional Hilbert space $L^2(V)$ constructed with all real-valued functions $f: V \rightarrow \mathbb{R}$ and  the other normalized Laplacian operator
$ \mathcal{L}_\G : L^2(V)\rightarrow L^2(V) $
and  is  defined as 
\begin{equation}\label{NorLap:Def2}
(\mathcal{L}_\G f)(i) :=  \frac{1}{\sqrt{d_id_j}}\sum_{j, i\sim j} \sum_{\substack{e  \in E, \\ i,j\in e} } \frac{1}{|e|-1}(f(i)-f(j)) =  f(i) -  \frac{1}{\sqrt{d_id_j}}\sum_{j, i\sim j} \sum_{\substack{e  \in E, \\ i,j\in e} } \frac{1}{|e|-1}f(j).
\end{equation}
For standard basis  we get the matrix form of the above normalized Laplacian operator $\mathcal{L}_\G$ as
\begin{equation}\label{NorLapMatrix:Def2}
(\mathcal{L}_\G)_{ij} = \left\{ \begin{array}{ll}
 1 & \textrm{if $i=j$,}\\
  \frac{-1}{\sqrt{d_id_j}}\sum_{\substack{e  \in E, \\ i,j\in e} } \frac{1}{|e|-1} & \textrm{if $i\sim j$,}\\
 0 & \textrm{elsewhere.}
  \end{array} \right.
\end{equation}
Two normalized Laplacian operators in (\ref{NorLap:Def1}) and (\ref{NorLap:Def2}) are equivalent. Hence, the matrices in (\ref{NorLapMatrix:Def1}) and (\ref{NorLapMatrix:Def2}) are similar and thus have the same spectrum. In this article we use the normalized Laplacian operator defined in (\ref{NorLap:Def1}) and its matrix form\footnote{
Note that,
two non-isomorphic hypergraphs of order $m > 2$ may have the same normalized Laplacian matrix $\Delta_\mathcal{G}$ (or the normalized adjacency matrix $R_{\G}$). 
For an example, It happens when all the 2-element subsets of the vertex set of the hypergraph are subsets of a fixed number of edges. For instance, existence of a  (combinatorial) simple incomplete 2-design on the vertex set of a hypergraph where each edge is considered as a block. A particular example is the Fano plane, which is a finite projective plane of order $2$ with $7$ points, represents  a $3$-uniform hypergraph on 7 vertices with the vertex set $\{ 1, 2, 3, 4, 5, 6, 7\}$ and the edge set $\{ \{1,2,3 \}, \{ 1,4,7\}, \{1,5,6 \},$ $\{2,4,6 \},\{2,5,7 \},\{3,4,5 \},\{3,6,7 \}\}$, where each pair of vertices belongs to exactly one edge.  Fano plane is a regular balanced incomplete  block $(7,3,1)$-design. Thus, the normalized Laplacian (adjacency) matrices for Fano plane and $K_7^3$, respectively, are the same.} in (\ref{NorLapMatrix:Def2}).
It is easy to verify that the eigenvalues of $\Delta_{\G}$ for an $m(>2)$-unform hypergraph lie in $[0,2)$ and the number of connected components in $\G$ is equal to the (algebraic) multiplicity of eigenvalue $0$. When $\G$ is connected, $u_1$ is constant.
Many theorems for normalized Laplacian matrix  can be constructed  similar to the theorems for Laplacian matrix (operator). We see that
$\lambda_2(\Delta_{\G})$ can also bound the Cheeger constant defined as,
$$h(\G) := \inf_{\phi \ne S \subset V} \bigg\{ \frac{|\partial S|}{\min (vol(S),  vol(\bar{S}))} \bigg\}$$ for a hypergraph $\G (V,E)$ from below and above. Here ${vol } (S) = \sum_{i\in S} d_i$.
This Cheeger constant $h(\G)$ can also be bounded above and bellow by $\lambda_2(\Delta_\G)$, respectively, as follows.
\begin{thm}
 Let $\G (V,E)$ be a  connected hypergraph on $n$ vertices. Then $$ 2 \lambda_2(L_\G) \Big(\frac{cr(\G)-1}{r(\G)\big(r(\G)-1\big)}\Big)
   \le h(\G) < (r(\G)-1) \sqrt{(2 - \lambda_2)\lambda_2}.$$  
\end{thm}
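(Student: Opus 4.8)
The plan is to prove the two inequalities as the volume-weighted analogues of Theorems \ref{CheegConstant:Lap:Thm1} and \ref{CheegConstant:Lap:Thm2}, working throughout with the normalized operator $\Delta_\G$, its Rayleigh quotient \rf{NorLap:Def:RayQuo1}, and the variational characterization $\lambda_2(\Delta_\G)=\inf_{u\perp_\mu u_1}\mathcal{R}_{\Delta_\G}(u)$. Here $u_1$ is constant because $\G$ is connected, and the relevant measure is $\mu(i)=d_i$, so that $\langle f,f\rangle_\mu=\sum_i d_i f(i)^2$; the only structural difference from the combinatorial setting is that the denominator of every Rayleigh quotient is now a volume rather than a cardinality. (The left-hand $\lambda_2$ is understood as $\lambda_2(\Delta_\G)$, which is the quantity compatible with the volume-weighted Cheeger constant $h(\G)$.)

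For the lower bound I would follow the proof of Theorem \ref{CheegConstant:Lap:Thm1} almost verbatim, replacing $|S|$ by $vol(S)$. Taking a set $S$ with $vol(S)\le vol(V\setminus S)$ that realizes the infimum in $h(\G)$ and testing with $u=vol(S)^{-1/2}$ on $S$ and $0$ elsewhere (so $\langle u,u\rangle_\mu=1$), the numerator of $\mathcal{R}_{\Delta_\G}(u)$ is the same Dirichlet sum $\sum_{i\sim j}\sum_{e\ni i,j}\frac{1}{|e|-1}(u(i)-u(j))^2$ as for $L_\G$. It receives contributions only from boundary edges, and for fixed $i\in e\cap S$ the crossing pairs inside $e$ give $\frac{|e|-t_S(e)}{|e|-1}u(i)^2\le\frac{r(\G)-1}{cr(\G)-1}u(i)^2$. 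This yields $\lambda_2(\Delta_\G)\le\frac{r(\G)-1}{cr(\G)-1}\frac{|\partial S|\,t(S)}{vol(S)}$, and the symmetric estimate on $V\setminus S$, together with $t(S)+t(V\setminus S)\le r(\G)$, gives after adding the two that $\lambda_2(\Delta_\G)\le\frac{r(\G)(r(\G)-1)}{2(cr(\G)-1)}h(\G)$, which is the claimed inequality.

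For the upper bound I would transcribe the proof of Theorem \ref{CheegConstant:Lap:Thm2}, the only genuine change being the normalization of the denominators. Writing $u=|u_2|$ for a $\lambda_2(\Delta_\G)$-eigenfunction with $\|u_2\|_\mu=1$, the sign change of $u_2$ gives the strict inequality $\lambda_2(\Delta_\G)>\mathcal{R}_{\Delta_\G}(u)\ge M$ (the analogue of \rf{cheegerUBound:eq1}), where now $M:=\frac{1}{r(\G)-1}\sum_{i\sim j}d_{ij}(u(i)-u(j))^2$ and $\sum_i d_i u(i)^2=1$. The Cauchy--Schwarz step \rf{cheegerUBound:eq2} and the layer-cake estimate \rf{cheegerUBound:eq3} carry over once $|V_k|$ is replaced by $vol(V_k)$ and one uses $|\partial V_k|\ge h(\G)\,vol(V_k)$; since $\sum_k vol(V_k)(t_k^2-t_{k-1}^2)=\sum_i d_i u(i)^2=1$ and $\delta(\G)\ge1$, the numerator there is bounded below by $h(\G)^2$. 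In the analogue of \rf{cheegerUBound:eq4} the key computation is $\sum_{i\sim j}d_{ij}(u(i)^2+u(j)^2)=\sum_i u(i)^2\sum_{e\ni i}(|e|-1)\le(r(\G)-1)\sum_i d_i u(i)^2=r(\G)-1$, so the bracket that read $2d_{max}-M$ for $L_\G$ becomes simply $2-M$ here. Assembling the three estimates gives $M(2-M)>\frac{h(\G)^2}{(r(\G)-1)^2}$, and passing from $M$ to $\lambda_2(\Delta_\G)>M$ as in the last line of Theorem \ref{CheegConstant:Lap:Thm2} yields $h(\G)<(r(\G)-1)\sqrt{(2-\lambda_2)\lambda_2}$.

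The routine parts are the boundary counting in the lower bound and the algebraic identities in the upper bound, all inherited from the two earlier theorems. The step I expect to be the real obstacle is making the two normalizations match in the co-area inequality: the Cheeger constant here measures the boundary by the unweighted edge count $|\partial S|$ but divides by the volume $vol(S)$, so one must verify that the level-set bound takes the form $|\partial V_k|\ge h(\G)\,vol(V_k)$ against the $\mu$-weighted normalization $\sum_i d_i u(i)^2=1$, and that this is precisely what converts the factor $2d_{max}$ into $2$. Confirming that the counting measure on edges and the volume measure on vertices combine consistently is the point where the normalized argument genuinely differs from the combinatorial one rather than being a formal substitution.
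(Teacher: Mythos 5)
Your proposal is correct and takes essentially the same route the paper intends: the paper's own proof is just the remark that the argument is ``similar to'' Theorems \ref{CheegConstant:Lap:Thm1} and \ref{CheegConstant:Lap:Thm2}, and you carry out exactly that adaptation, correctly identifying the two substantive changes (the $\mu$-normalization $\sum_i d_i u(i)^2$ in all Rayleigh quotients, and the identity $\sum_{j\sim i} d_{ij}=\sum_{e\ni i}(|e|-1)\le (r(\G)-1)d_i$ that turns $2d_{max}$ into $2$). Your reading of the left-hand $\lambda_2(L_\G)$ as $\lambda_2(\Delta_\G)$ is the interpretation consistent with the volume-weighted $h(\G)$ and with the sentence preceding the theorem.
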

\begin{proof}
The proof is similar to the proofs of Theorem \ref{CheegConstant:Lap:Thm1} and Theorem \ref{CheegConstant:Lap:Thm2}.
\end{proof}

%
%
%
A similar thoerem as Theorem \ref{Lap:Diam:LBound} can be written for $\lambda_2(\Delta_\G)$ as
\begin{thm}
For a connected hypergraph $\G$ on $n$ vertices,
$$diam(\G) \ge \frac{4}{n(r(\G)-1) d_{max} \lambda_2(\Delta_\G)}.$$
\end{thm}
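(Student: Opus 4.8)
The plan is to mimic the proof of Theorem~\ref{Lap:Diam:LBound} almost verbatim, replacing the combinatorial Laplacian $L_\G$ by the normalized Laplacian $\Delta_\G$ and tracking the extra factor of $d_{max}$ that the measure $\mu(i)=d_i$ introduces. First I would take $u_2$ to be the eigenfunction of $\Delta_\G$ realizing $\lambda_2(\Delta_\G)$ and write out its Rayleigh quotient using the formula in~\eqref{NorLap:Def:RayQuo1},
$$
\lambda_2(\Delta_\G) = \frac{\sum_{i\sim j}\sum_{\substack{e\in E,\\ i,j\in e}}\frac{1}{|e|-1}\big(u_2(i)-u_2(j)\big)^2}{\sum_{i\in V} d_i\, u_2(i)^2}.
$$
The numerator here is exactly the same bilinear form that appears in the combinatorial Laplacian case, so the first genuine step is to bound it below by the corresponding quantity for the underlying unweighted graph ${G}_0[A_\mathcal{G}]$: each weight $\frac{1}{|e|-1}$ is at least $\frac{1}{r(\G)-1}$, which yields a factor $\frac{1}{r(\G)-1}\,\lambda_2\big(L_{G_0[A_\mathcal{G}]}\big)$ after applying the variational characterization, precisely as in the proof of Theorem~\ref{Lap:Diam:LBound}.

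The new ingredient is the denominator. In the combinatorial case the denominator was $\|u_2\|^2=\sum_i u_2(i)^2$, whereas now it carries the weights $d_i$, so $\sum_i d_i u_2(i)^2 \le d_{max}\sum_i u_2(i)^2$. This is the source of the extra $d_{max}$ in the claimed bound: dividing by the larger $\mu$-norm only shrinks the Rayleigh quotient, and controlling the shrinkage by $d_{max}$ gives an inequality of the shape $\lambda_2(\Delta_\G) \ge \frac{1}{d_{max}}\cdot(\text{combinatorial bound})$. Next I would invoke Theorem~4.2 of~\cite{mohar1991} to pass from $\lambda_2\big(L_{G_0[A_\mathcal{G}]}\big)$ to the diameter of the underlying graph, namely $\lambda_2\big(L_{G_0[A_\mathcal{G}]}\big)\ge \frac{4}{n\cdot diam({G}_0[A_\mathcal{G}])}$, exactly as before.

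Assembling these pieces gives
$$
\lambda_2(\Delta_\G) \ge \frac{1}{(r(\G)-1)\, d_{max}}\,\lambda_2\big(L_{G_0[A_\mathcal{G}]}\big) \ge \frac{4}{n\,(r(\G)-1)\,d_{max}\,diam({G}_0[A_\mathcal{G}])},
$$
and since distances in $\G$ coincide with distances in ${G}_0[A_\mathcal{G}]$ we have $diam(\G)=diam({G}_0[A_\mathcal{G}])$, which rearranges to the stated inequality. I expect the only delicate point to be getting the $d_{max}$ factor placed correctly and with the right direction of inequality: one must verify that bounding the weighted $\mu$-norm above by $d_{max}\|u_2\|^2$ indeed produces a lower bound on $\lambda_2(\Delta_\G)$ rather than an upper bound, and that this is compatible with the normalization $u_2$ being an eigenfunction (so that the Rayleigh quotient equals $\lambda_2(\Delta_\G)$ exactly). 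Everything else is a routine transcription of the Laplacian argument, so the author's remark that ``a similar theorem'' holds is justified, but the careful bookkeeping of the measure is what distinguishes this statement from Theorem~\ref{Lap:Diam:LBound}.
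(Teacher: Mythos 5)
Your strategy is the one the paper intends: the paper gives no proof of this theorem, remarking only that it is ``similar'' to Theorem~\ref{Lap:Diam:LBound}, and adapting that proof while tracking the measure $\mu(i)=d_i$ is indeed the right plan. But there is a genuine gap at the point where you invoke ``the variational characterization, precisely as in the proof of Theorem~\ref{Lap:Diam:LBound}''. In the combinatorial case that step works because the eigenfunction $u_2$ of $L_\G$ is orthogonal to the constant function in the \emph{standard} inner product, which is exactly the constraint in $\lambda_2(L_{G_0[A_\mathcal{G}]})=\inf_{0\ne u\perp\mathbf{1}}\langle L_{G_0[A_\mathcal{G}]}u,u\rangle/\|u\|^2$. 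The eigenfunction $u_2$ of $\Delta_\G$ satisfies only $\sum_i d_i u_2(i)=0$, i.e.\ orthogonality to constants in the $\mu$-weighted inner product, and by Courant--Fischer the infimum of the unweighted Rayleigh quotient over the hyperplane $\{u:\sum_i d_iu(i)=0\}$ can be strictly smaller than $\lambda_2(L_{G_0[A_\mathcal{G}]})$ when $\G$ is not regular. So the inequality $\sum_{i\sim j}(u_2(i)-u_2(j))^2\ge\lambda_2(L_{G_0[A_\mathcal{G}]})\sum_i u_2(i)^2$ that your chain of estimates silently uses is not justified; the delicate point is not the direction of the $d_{max}$ bound (which you handle correctly) but this orthogonality mismatch.

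The gap is repairable with one centring step. Set $\bar u=\frac1n\sum_i u_2(i)$ and $v=u_2-\bar u\,\mathbf{1}$, so that $v\perp\mathbf{1}$ in the standard inner product and the Dirichlet form is unchanged. Then the numerator of the normalized Rayleigh quotient satisfies
$$\sum_{i\sim j}\sum_{\substack{e\in E,\\ i,j\in e}}\tfrac{1}{|e|-1}\bigl(u_2(i)-u_2(j)\bigr)^2\;\ge\;\tfrac{1}{r(\G)-1}\langle L_{G_0[A_\mathcal{G}]}v,v\rangle\;\ge\;\tfrac{1}{r(\G)-1}\,\lambda_2(L_{G_0[A_\mathcal{G}]})\,\|v\|^2,$$
while for the denominator one uses that $c\mapsto\sum_i d_i(u_2(i)-c)^2$ is minimized at the $\mu$-weighted mean of $u_2$, which is $0$; hence
$$\sum_i d_i u_2(i)^2\;\le\;\sum_i d_i\bigl(u_2(i)-\bar u\bigr)^2\;\le\;d_{max}\,\|v\|^2.$$
Dividing the first display by the second gives $\lambda_2(\Delta_\G)\ge\frac{1}{(r(\G)-1)\,d_{max}}\lambda_2(L_{G_0[A_\mathcal{G}]})$ (note $\|v\|\ne0$, since $\lambda_2(\Delta_\G)>0$ forces $u_2$ to be non-constant), after which Theorem 4.2 in \cite{mohar1991} and the identity $diam(\G)=diam(G_0[A_\mathcal{G}])$ finish the argument exactly as you describe.
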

Theorem \ref{Lap:Diam:UBound}  also holds for the respective eigenvalues of $\Delta_\G$ as

\begin{thm}
Let $\G (V,E)$ be a connected hypergraph with $n$ vertices and at least one pair of nonadjacent vertices. Then, for $V_1, V_2 \subset V$ such that $ V_2 \ne V_1 \ne V\setminus V_2$, we have
$$d(V_1, V_2) \le \Bigg\lceil  \frac{\log\sqrt{\frac{vol(\bar{V_1})vol(\bar{V_2})}{vol(V_1) vol(V_2)}}}{\log\Big(\frac{\lambda_n(\Delta_\G) + \lambda_2(\Delta_\G)}{\lambda_n(\Delta_\G)- \lambda_2(\Delta_\G)}\Big)}     \Bigg   \rceil.$$
\end{thm}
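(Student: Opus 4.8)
The plan is to transcribe the proof of Theorem~\ref{Lap:Diam:UBound} into the weighted setting of $\Delta_\G$, replacing the standard inner product by $\langle\cdot,\cdot\rangle_\mu$ and every cardinality by the corresponding volume. First I would, for a proper subset $V_1\subset V$, introduce the normalized indicator $u_{V_1}(i)=1/\sqrt{vol(V_1)}$ for $i\in V_1$ and $0$ otherwise, built so that $\|u_{V_1}\|_\mu=1$ since $\|u_{V_1}\|_\mu^2=\sum_{i\in V_1} d_i/vol(V_1)=1$. Because $\G$ is connected, $u_1$ is the constant eigenfunction for $\lambda_1(\Delta_\G)=0$, and normalizing in $l^2(V,\mu)$ gives $u_1=\mathbf{1}_n/\sqrt{vol(V)}$. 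Expanding $u_{V_1}=\sum_{i=1}^n a_i u_i$ and $u_{V_2}=\sum_{i=1}^n b_i u_i$ in a $\mu$-orthonormal eigenbasis, a direct computation gives $a_1=\langle u_{V_1},u_1\rangle_\mu=\sqrt{vol(V_1)/vol(V)}$ and $b_1=\sqrt{vol(V_2)/vol(V)}$; consequently $\sum_{i\ge 2}a_i^2=1-a_1^2=vol(\bar{V_1})/vol(V)$ and $\sum_{i\ge 2}b_i^2=vol(\bar{V_2})/vol(V)$.

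Next I would reuse the polynomial $p_t(x)=\big(1-\tfrac{2x}{\lambda_2(\Delta_\G)+\lambda_n(\Delta_\G)}\big)^t$, for which $p_t(0)=1$ and $|p_t(\lambda_i(\Delta_\G))|\le(1-\lambda)^t$ for all $i\ge 2$, where $\lambda=\tfrac{2\lambda_2(\Delta_\G)}{\lambda_2(\Delta_\G)+\lambda_n(\Delta_\G)}$; this bound holds because on $[\lambda_2,\lambda_n]$ the affine factor takes values in $[-(1-\lambda),1-\lambda]$, and here $0<\lambda_2(\Delta_\G)$ (by connectivity) guarantees $1-\lambda<1$. Applying Cauchy--Schwarz to the tail exactly as in inequality~(\ref{DiamUBound:eq0}) yields
$$\langle u_{V_2},p_t(\Delta_\G)u_{V_1}\rangle_\mu \;\ge\; \frac{\sqrt{vol(V_1)vol(V_2)}}{vol(V)}-(1-\lambda)^t\frac{\sqrt{vol(\bar{V_1})vol(\bar{V_2})}}{vol(V)},$$
with the inequality strict under the hypothesis $V_2\ne V_1\ne V\setminus V_2$, since equality in Cauchy--Schwarz would force $a_i=c\,b_i$ for all $i$, which is impossible unless $V_1=V_2$ or $V_1=V\setminus V_2$.

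Finally I would invoke the degree-$t$ polynomial lemma: the off-diagonal entries of $\Delta_\G$ vanish on non-adjacent pairs, so $(p_t(\Delta_\G))_{ij}\ne 0$ forces $d(i,j)\le t$; hence if $\langle u_{V_2},p_t(\Delta_\G)u_{V_1}\rangle_\mu>0$ then some $i\in V_2$, $j\in V_1$ satisfy $(p_t(\Delta_\G))_{ij}\ne 0$, giving $d(V_1,V_2)\le t$. The displayed lower bound is positive as soon as $(1-\lambda)^t<\sqrt{vol(V_1)vol(V_2)/(vol(\bar{V_1})vol(\bar{V_2}))}$; taking logarithms and using $\tfrac{1}{1-\lambda}=\tfrac{\lambda_n(\Delta_\G)+\lambda_2(\Delta_\G)}{\lambda_n(\Delta_\G)-\lambda_2(\Delta_\G)}$ (so $\log\tfrac{1}{1-\lambda}>0$), this is equivalent to $t>\log\sqrt{vol(\bar{V_1})vol(\bar{V_2})/(vol(V_1)vol(V_2))}\big/\log\tfrac{\lambda_n(\Delta_\G)+\lambda_2(\Delta_\G)}{\lambda_n(\Delta_\G)-\lambda_2(\Delta_\G)}$, and the smallest admissible integer $t$ is the claimed ceiling. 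I expect the only real care points to be bookkeeping rather than a genuine obstacle: verifying that the volume-weighted normalizations of $a_1,b_1$ and the tail sums come out correctly, and checking that the support/path-length argument survives the $d_i$-weighting in $\langle\cdot,\cdot\rangle_\mu$ (it does, since the weight merely rescales each term and cannot create a nonzero entry where $(p_t(\Delta_\G))_{ij}=0$). One should also note that the bound is vacuous unless $\lambda_n(\Delta_\G)>\lambda_2(\Delta_\G)$, which is implicit in the statement.
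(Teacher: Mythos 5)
Your proposal is correct and is exactly the route the paper intends: the paper states this theorem without a separate proof, simply asserting that Theorem \ref{Lap:Diam:UBound} "also holds" for $\Delta_\G$, and your transcription (volume-normalized indicators, $\mu$-orthonormal eigenbasis, $a_1=\sqrt{vol(V_1)/vol(V)}$, the same polynomial $p_t$, and the same Cauchy--Schwarz and support arguments) is the faithful adaptation, with the bookkeeping done correctly.
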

Then we have the similar corollary as we have for $L_\G$.
\begin{corollary}
	For a connected  hypergraph $\G (V,E)$ with $n$ vertices and at least one pair of nonadjacent vertices, 
	$$diam(\G) \le	\Bigg\lceil   \frac{\log(\frac{(n-1)d_{max}}{d_{min}})}{\log\Big(\frac{\lambda_n(\Delta_\G) + \lambda_2(\Delta_\G)}{\lambda_n(\Delta_\G)- \lambda_2(\Delta_\G)}\Big)}     \Bigg   \rceil.$$ Moreover, if $\G $ is regular then 
	$$diam(\G) \le	\Bigg\lceil   \frac{\log(n-1)}{\log\Big(\frac{\lambda_n(\Delta_\G) + \lambda_2(\Delta_\G)}{\lambda_n(\Delta_\G)- \lambda_2(\Delta_\G)}\Big)}     \Bigg   \rceil.$$ 
\end{corollary}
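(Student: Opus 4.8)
The plan is to obtain both inequalities directly from the immediately preceding theorem (the $\Delta_\G$-analogue of Theorem \ref{Lap:Diam:UBound}) by specializing the two subsets to singletons. First I would pick vertices $i,j\in V$ that realize the diameter, so that $d(i,j)=diam(\G)$. Since $\G$ has at least one pair of nonadjacent vertices, $diam(\G)\ge 2$, hence $i\ne j$ and necessarily $n\ge 3$ (two nonadjacent vertices in a connected hypergraph force a third vertex). I then apply the theorem with $V_1=\{i\}$ and $V_2=\{j\}$. The hypothesis $V_2\ne V_1\ne V\setminus V_2$ holds because $|V_1|=|V_2|=1$ with $i\ne j$, while $|V\setminus V_2|=n-1\ge 2$, so $V_1\ne V\setminus V_2$.

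With this choice the relevant volumes are $vol(V_1)=d_i$, $vol(V_2)=d_j$, $vol(\bar{V_1})=vol(V)-d_i$ and $vol(\bar{V_2})=vol(V)-d_j$. The only real work is the arithmetic estimate of the numerator: since $vol(V)-d_i=\sum_{k\ne i}d_k\le (n-1)d_{max}$ and $d_i\ge d_{min}$, and likewise for $j$, one gets
$$\sqrt{\frac{vol(\bar{V_1})\,vol(\bar{V_2})}{vol(V_1)\,vol(V_2)}} \le \frac{(n-1)d_{max}}{d_{min}}.$$
Because $\G$ is connected we have $\lambda_2(\Delta_\G)>0$, so $\frac{\lambda_n(\Delta_\G)+\lambda_2(\Delta_\G)}{\lambda_n(\Delta_\G)-\lambda_2(\Delta_\G)}>1$ and its logarithm is strictly positive. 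This positivity is exactly what lets me pass the displayed bound through the fraction and the (monotone nondecreasing) ceiling to conclude
$$diam(\G)=d(\{i\},\{j\})\le \Bigg\lceil \frac{\log\big((n-1)d_{max}/d_{min}\big)}{\log\big((\lambda_n(\Delta_\G)+\lambda_2(\Delta_\G))/(\lambda_n(\Delta_\G)-\lambda_2(\Delta_\G))\big)} \Bigg\rceil,$$
which is the first claim.

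For the regular case I would simply substitute $d_i=d_j=d_{max}=d_{min}=k$ into the volume expressions: then $vol(V)=nk$, $vol(\bar{V_1})=vol(\bar{V_2})=(n-1)k$, and the quantity under the square root equals $(n-1)^2$ exactly, so its logarithm is $\log(n-1)$. This converts the general bound into the sharper regular bound, with no slack lost in the degree ratio.

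I do not expect a genuine obstacle here, as this is a routine specialization of an already-proved theorem. The only points needing care are (i) verifying the subset hypotheses $V_2\ne V_1\ne V\setminus V_2$ in the singleton case, which relies on $n\ge 3$ forced by the nonadjacent pair, and (ii) justifying the monotonicity step, which rests on the positivity of the denominator $\log\big((\lambda_n(\Delta_\G)+\lambda_2(\Delta_\G))/(\lambda_n(\Delta_\G)-\lambda_2(\Delta_\G))\big)$, itself a consequence of connectivity through $\lambda_2(\Delta_\G)>0$ together with $\lambda_n(\Delta_\G)>\lambda_2(\Delta_\G)$.
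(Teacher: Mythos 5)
Your proof is correct and is exactly the intended derivation: the paper states this corollary without proof as an immediate consequence of the preceding theorem, and specializing $V_1,V_2$ to singletons realizing the diameter, then bounding $vol(\bar{V_k})/vol(V_k)$ by $(n-1)d_{max}/d_{min}$ (with equality collapsing to $(n-1)^2$ under the square root in the regular case), is precisely the route it has in mind. Your verifications of the hypothesis $V_2\ne V_1\ne V\setminus V_2$ via $n\ge 3$ and of the positivity of the denominator are sound.
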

Now we find bounds on eigenvalues of $\delta_\G$. It is easy to verify that for a hypergraph $\G$ containing at least one pair of nonadjacent vertices  $\lambda_2(\Delta_{\G}) \le 1 \le \lambda_n(\Delta_{\G}) $.
%
A similar theorem of Theorem \ref{Lap:Eig:UBound} can be stated as follows.
\begin{thm}
	Let $\G (V,E)$ be an $m$-uniform connected hypergraph with $n$ vertices. Then 
	$$\lambda_n(\Delta_{\G}) \le \max \Bigg\{ \frac{2 (m-1)d_i -1 + 
		\sqrt{ 1  - 4(m-1)d_i +  4(m-1)^2 d_i  D_{max}^2 m_i  }
		}	{2(m-1)d_i} : i \in V \Bigg\} 	,$$
		where $m_i = (\sum_{j, j\sim i} d_j)/d_i(m-1)$ is the average $2$-degree of the vertex $i$ and $D_{max}= \max\{ d_{xy}: x,y\in V  \}$.
\end{thm}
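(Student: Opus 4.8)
The plan is to mirror the argument of Theorem~\ref{Lap:Eig:UBound} for the combinatorial Laplacian, making the modifications forced by the weighted inner product $\langle\cdot,\cdot\rangle_\mu$ with $\mu(i)=d_i$ and by the different shape of the eigenvalue equation for $\Delta_\G$. Let $u_n$ be an eigenfunction of $\Delta_\G$ with eigenvalue $\lambda_n=\lambda_n(\Delta_\G)$. First I would record the weighted analogue of (\ref{UBound:eq1}): since the Rayleigh quotient (\ref{NorLap:Def:RayQuo1}) carries the measure $d_i$ in its denominator, the identity $\langle\Delta_\G u_n,u_n\rangle_\mu=\lambda_n\langle u_n,u_n\rangle_\mu$ reads $\sum_{i\sim j}d_{ij}(u_n(i)-u_n(j))^2=(m-1)\lambda_n\sum_{i}d_iu_n(i)^2$.

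Next, writing out the eigenvalue equation $(\Delta_\G u_n)(i)=\lambda_n u_n(i)$ at each vertex $i$ from the matrix form (\ref{NorLapMatrix:Def1}) gives $(m-1)d_i(1-\lambda_n)u_n(i)=\sum_{j,\,j\sim i}d_{ij}u_n(j)$, which is the normalized replacement for the relation $(m-1)(d_i-\lambda_n)u_n(i)=\sum_{j\sim i}d_{ij}u_n(j)$ used for $L_\G$. I would then bound $d_{ij}\le D_{max}$, square, and apply the Lagrange identity to $\big(\sum_{j\sim i}D_{max}u_n(j)\big)^2$ exactly as in the proof of Theorem~\ref{Lap:Eig:UBound}, replacing the number of neighbours of $i$ by its upper bound $(m-1)d_i$. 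Summing over $i$ produces the weighted counterpart of (\ref{UBound:eq2}),
\[
\sum_i (m-1)^2 d_i^2 (1-\lambda_n)^2 u_n(i)^2 \le (m-1)D_{max}^2\sum_i d_i\sum_{j\sim i}u_n(j)^2 - D_{max}^2\sum_i\sum_{\substack{j<k\\ j\sim i,\,k\sim i}}(u_n(j)-u_n(k))^2,
\]
where now a factor $d_i^2$ rather than the plain coefficient accompanies the $(1-\lambda_n)^2$ term because the normalized eigenvalue equation contributes the extra $d_i$.

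To finish I would insert the two simplifications already used for $L_\G$: the identity $\sum_i d_i\sum_{j\sim i}u_n(j)^2=(m-1)\sum_i d_i m_i u_n(i)^2$, obtained by interchanging the order of summation and recognizing the average $2$-degree $m_i$, and the lower bound for the subtracted double-difference term via the weighted version of (\ref{UBound:eq1}). These convert the display into $\sum_i\big[(m-1)^2 d_i^2(1-\lambda_n)^2-(m-1)^2 d_i m_i D_{max}^2+(m-1)d_i\lambda_n\big]u_n(i)^2\le 0$, so some vertex $i$ must make the bracket nonpositive. Dividing that bracket by $(m-1)d_i$ and expanding leaves the quadratic $(m-1)d_i\,\lambda_n^2+(1-2(m-1)d_i)\lambda_n+(m-1)(d_i-m_iD_{max}^2)\le 0$; solving for its larger root yields precisely $\lambda_n\le\frac{2(m-1)d_i-1+\sqrt{1-4(m-1)d_i+4(m-1)^2 d_i D_{max}^2 m_i}}{2(m-1)d_i}$, and taking the maximum over $i\in V$ gives the claim.

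The main obstacle is the same delicate combinatorial step that occurs in Theorem~\ref{Lap:Eig:UBound}: justifying $D_{max}^2\sum_i\sum_{j<k,\,j\sim i,\,k\sim i}(u_n(j)-u_n(k))^2\ge\sum_{i\sim j}d_{ij}(u_n(i)-u_n(j))^2$, so that discarding the subtracted term only enlarges the right-hand side. One must verify that each edge-difference $(u_n(i)-u_n(j))^2$ is recovered, with adequate weight, among the two-step differences through a common neighbour, and this is where $m$-uniformity together with the codegree bound $d_{ij}\le D_{max}$ are really used. Everything else is bookkeeping that tracks the additional factors of $d_i$ coming from the measure $\mu$, so once this inequality is granted the computation is a faithful reweighting of the combinatorial-Laplacian argument.
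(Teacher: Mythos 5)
Your proposal follows essentially the same route as the paper: the paper's own proof simply states the weighted identity $\sum_{i\sim j}d_{ij}(u_n(i)-u_n(j))^2=(m-1)\lambda_n(\Delta_\G)\sum_i d_i u_n(i)^2$, declares the rest "similar to the proof of Theorem \ref{Lap:Eig:UBound}", and lands on the same quadratic $(m-1)d_i\lambda_n^2+(1-2(m-1)d_i)\lambda_n+(m-1)(d_i-m_iD_{max}^2)\le 0$ that you derive. Your reconstruction of the intermediate displays (including the normalized eigenvalue equation and the reweighted analogue of (\ref{UBound:eq3})) is a faithful and correct filling-in of what the paper leaves implicit, and the combinatorial inequality you flag as the main obstacle is inherited unchanged from Theorem \ref{Lap:Eig:UBound} exactly as in the paper.
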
	
\begin{proof}
Let $u_n$ be an eigenfunction of $\Delta_{\G}$ with the eigenvalue $\lambda_n(\Delta_\G)$. Then, from the eigenvalue equation for  $\lambda_n(\Delta_\G)$ and $u_n$,  we have
\begin{equation} 
\sum_{i,j, i\sim j} d_{ij} (u_n(i) - u_n(j))^2 = (m-1) \lambda_n(\Delta_\G) \sum_{i=1}^{n} d_i u_n(i)^2.
\end{equation}
Rest of the proof is similar to the proof of Theorem \ref{Lap:Eig:UBound}.
Now the expression in (\ref{UBound:eq3}) becomes
 $
 \sum_{i=1}^{n} \big[(m-1) (1 - \lambda_n(\Delta_\G)^2)d_i  - (m-1) m_i D_{max}^2 +  \lambda_n(\Delta_\G)  \big] u_n(i)^2 \le 0.
$
Using similar arguments as in Theorem \ref{Lap:Eig:UBound} we have
$
 (m-1) d_i  \lambda_n(\Delta_\G)^2 
 + (1 - 2 (m-1) d_i  )  \lambda_n(\Delta_\G)
+ (m-1)(d_i -m_i D_{max}^2)   \le 0,
$ 
which proves the theorem.
\end{proof}
When $m=2$, $\G$ becomes a \textit{triangulation} and the above upper bound coincides with the 
 result  proved  in \cite{Li2014} for a triangulation. Now we have the following corollary.
\begin{corollary}
		For an $m$-uniform connected hypergraph $\G (V,E)$ with the maximum and the  minimum degrees $d_{max}$ and $d_{min}$, respectively, we have
	$$\lambda_n(\Delta_{\G}) \le   \frac{2 (m-1)d_{max} -1 + 
				\sqrt{ 1  - 4(m-1)d_{min} +  4(m-1)^2  d_{max}^2  |E|^2  }
				}	{2(m-1)d_{min}}.$$
\end{corollary}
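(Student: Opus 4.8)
The plan is to derive the corollary directly from the preceding theorem by replacing every vertex-dependent quantity in the maximum with a global bound, always in the direction that can only increase the expression. Since the theorem already gives
$$\lambda_n(\Delta_{\G}) \le \max_{i\in V}\; \frac{2(m-1)d_i - 1 + \sqrt{1 - 4(m-1)d_i + 4(m-1)^2 d_i D_{max}^2 m_i}}{2(m-1)d_i},$$
it suffices to show that each individual term (for a fixed vertex $i$) is at most the stated bound $B$ on the right-hand side of the corollary; the maximum over $i$ is then also at most $B$.

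First I would record the two structural estimates that convert local data into global data, exactly as in the analogous corollary for $L_\G$. By definition $d_i m_i = (\sum_{j\sim i} d_j)/(m-1)$; since each neighbor has degree at most $d_{max}$ and vertex $i$ has at most $d_i(m-1)$ neighbors, $\sum_{j\sim i} d_j \le d_i(m-1)d_{max}$, whence $d_i m_i \le d_i d_{max} \le d_{max}^2$. Combined with $D_{max} \le |E|$ (the codegree of any pair cannot exceed the total number of edges), this gives $d_i D_{max}^2 m_i \le d_{max}^2 |E|^2$.

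Next I would bound the fixed-$i$ term by treating numerator and denominator separately. In the numerator, $2(m-1)d_i \le 2(m-1)d_{max}$, while inside the radical the positive term obeys $4(m-1)^2 d_i D_{max}^2 m_i \le 4(m-1)^2 d_{max}^2 |E|^2$ and the negative linear term obeys $-4(m-1)d_i \le -4(m-1)d_{min}$, so the numerator is at most $2(m-1)d_{max} - 1 + \sqrt{1 - 4(m-1)d_{min} + 4(m-1)^2 d_{max}^2 |E|^2}$. In the denominator, $2(m-1)d_i \ge 2(m-1)d_{min}$. Dividing the upper bound on the numerator by the lower bound on the denominator yields precisely $B$, and taking the maximum over $i$ finishes the argument.

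The one point requiring care—the main (minor) obstacle—is the legitimacy of bounding numerator and denominator independently, which needs the numerator to be nonnegative: this holds because the radical is real (guaranteed by the theorem) and $2(m-1)d_{max} \ge 1$ for $m\ge 2$ with $d_{max}\ge 1$, so the numerator is strictly positive, and only then does the quotient estimate $N/D$ follow from numerator $\le N$ and denominator $\ge D > 0$. I would also note explicitly that the several occurrences of $d_i$ may be replaced by $d_{max}$ or $d_{min}$ independently, since for each fixed $i$ we are merely over-estimating a single real number; there is no inconsistency in using $d_{min}$ in one slot and $d_{max}$ in another.
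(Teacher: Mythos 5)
Your proposal is correct and follows essentially the same route as the paper: the paper states this corollary without a separate proof, but its proof of the analogous corollary for $L_\G$ uses exactly your two estimates $d_i m_i \le d_{max}^2$ and $D_{max}\le |E|$, and your argument is the natural completion of that for the normalized case. Your extra care in bounding the $i$-dependent numerator and denominator separately (using positivity of the numerator, which holds since $2(m-1)d_i\ge 2$) is precisely the detail the paper leaves implicit, and it is handled correctly.
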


\section{Random walk on hypergraphs}
A random walk on a hypergraph $\G(V,E)$ can be considered as a sequence of vertices $v_0, v_1, \dots, v_t$ and it can be determined by the transition probabilities $P(u,v) = Prob(x_{i+1} = v | x_i = u)$ which is independent of $i$. Thus, a simple random walk on a  hypergraph $\G(V,E)$ is a \textit{Markov chain}, where a \textit{Markov kernel} on $V$ is a function
$$P(\cdot, \cdot): V \times V \rightarrow [0,+\infty),$$
such that $\sum_{y\in V} P(x,y) = 1 $ $\forall x\in V$. Here
$P (x, y)$ is called reversible if there exists a positive function $\mu (\cdot)$ on the state space $V$, such that $P(x,y) \mu (x) = P(y,x) \mu (y)$.
A random walk is reversible if its underlying Markov kernel is reversible.
It is easy to see that a random walk on a connected   hypergraph with co-rank greater than 2 is \textit{ergodic}, i.e., $P$ is (i) \textit{irreducible}: i.e., for all $x,y \in V$, $P^t (x,y) > 0$ for some $t\in \mathbb{N}$ and (ii) \textit{aperiodic}: i.e., g.c.d $\{t : P^t (x,y) \} = 1$.
We can consider the transition probabilities $P_\G (x,y)$ for  a connected   hypergraph $\G(V,E)$ with $cr(\G)>2$  as \begin{equation*}
P_\G (x,y) = \left\{ \begin{array}{ll}
\frac{\sum_{\substack{e  \in E, \\ x,y\in e} } \frac{1}{|e|-1}}{d_x} & \textrm{if $x \sim y$,}\\
0 & \textrm{else.}
\end{array} \right.
\end{equation*}
Now, let us consider $P_\G : l^2(V, \mu) \rightarrow l^2(V, \mu)$ as a transition probability operator for the random walk on $\G$. Thus $\Delta_{\G} =  \mathbf{I} - P_\G$, where $\mathbf{I}$ is the identical operator in $l^2(V, \mu)$. Hence, for an eigenvalue $\lambda (\Delta_{\G})$ of $\Delta_{\G}$, we always get an eigenvalue $(1 - \lambda (\Delta_{\G}))$ of $P_\G$. Let $\alpha_k = 1- \lambda_k(\Delta_\G)$ be the eigenvalues of $P_\G$ of an $m(>2)$ hypergraph $\G$ on $n$ vertices, for $k=0,\dots,n$. Then, $-1 < \alpha_n \le \alpha_{n-1} \le \alpha_2 \le \alpha_1 =1$. Hence $||P_\G||\le 1$, since the Spec $P_\G \subset (-1,1]$. Let us consider the powers  $P_\G ^t$ of $P_\G$ for $t \in \mathbb{Z}^+$ as composition of operators. Below we recall the theorem for  convergence of random walk on graphs \cite{Grigoryan2011} in the context of  connected   hypergraphs $\G(V,E)$ with $cr(\G)>2$ on $n$ vertices.
\begin{thm}
For any function $f\in  l^2(V, \mu)$, take 
$$\overline{f} = \frac{1}{vol(V)} \sum_{i\in V} d_i f(i) .$$
Then, for any positive integer $t$, we have
$$|| P_\G ^t f - \overline{f}  || \le \rho^t ||f||,$$
where $\rho =\max_{k \ne 1} |1- \lambda_k(\Delta_\G)| = \max (|1- \lambda_2(\Delta_\G)| , |1- \lambda_n(\Delta_\G)| )$ is the spectral radius of $P_\G$ and $||f|| = \sqrt{\langle f,f \rangle_{\mu}}$.
\newline
Consequently,
$$|| P_\G ^t f - \overline{f}  || \rightarrow 0 $$ as $t \rightarrow \infty$, i.e., $P_\G ^tf$ converges to a constant $\overline{f} $ as $t \rightarrow \infty$.
\end{thm}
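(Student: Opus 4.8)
The plan is to exploit the self-adjointness of $P_\G$ with respect to the measure-weighted inner product $\langle \cdot, \cdot \rangle_\mu$. Since $\Delta_\G$ was shown to satisfy $\langle \Delta_\G f_1, f_2 \rangle_\mu = \langle f_1, \Delta_\G f_2 \rangle_\mu$ and $P_\G = \mathbf{I} - \Delta_\G$, the operator $P_\G$ is self-adjoint as well. By the spectral theorem I would fix an orthonormal basis $u_1, \dots, u_n$ of $l^2(V, \mu)$ consisting of eigenfunctions of $P_\G$, with $P_\G u_k = \alpha_k u_k$ and $\alpha_k = 1 - \lambda_k(\Delta_\G)$. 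Because $\G$ is connected, $\lambda_1(\Delta_\G) = 0$ is simple and its eigenfunction is constant; normalizing so that $\langle u_1, u_1 \rangle_\mu = 1$ forces $u_1 \equiv 1/\sqrt{vol(V)}$ and $\alpha_1 = 1$.

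First I would expand the given $f$ as $f = \sum_{k=1}^n \langle f, u_k \rangle_\mu u_k$ and apply $P_\G^t$ termwise via $P_\G^t u_k = \alpha_k^t u_k$, giving $P_\G^t f = \sum_{k=1}^n \alpha_k^t \langle f, u_k \rangle_\mu u_k$. The crucial step is to identify the $k=1$ contribution with $\overline{f}$: since $\alpha_1 = 1$ and $u_1 \equiv 1/\sqrt{vol(V)}$, a direct computation gives $\langle f, u_1 \rangle_\mu \, u_1 = \frac{1}{vol(V)} \sum_{i\in V} d_i f(i) = \overline{f}$, so this term is exactly the constant $\overline{f}$ and is stationary in $t$.

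Subtracting, I obtain $P_\G^t f - \overline{f} = \sum_{k=2}^n \alpha_k^t \langle f, u_k \rangle_\mu u_k$. Orthonormality of the $u_k$ in $\langle \cdot, \cdot \rangle_\mu$ then yields, by Parseval, $||P_\G^t f - \overline{f}||^2 = \sum_{k=2}^n \alpha_k^{2t} |\langle f, u_k \rangle_\mu|^2$. Bounding $|\alpha_k| \le \rho$ for every $k \ge 2$ and extending the sum back to include $k=1$ gives $||P_\G^t f - \overline{f}||^2 \le \rho^{2t} \sum_{k=1}^n |\langle f, u_k \rangle_\mu|^2 = \rho^{2t} ||f||^2$, and taking square roots produces the stated inequality. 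For the consequence, I would invoke the earlier observation that $\mathrm{Spec}\, P_\G \subset (-1, 1]$ together with the simplicity of $\alpha_1 = 1$ to conclude $\rho = \max_{k \ge 2} |\alpha_k| < 1$, so that $\rho^t \to 0$ and hence $||P_\G^t f - \overline{f}|| \to 0$.

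The main obstacle is conceptual rather than computational: correctly pinning down that the top eigenvalue contributes exactly the $\mu$-weighted average $\overline{f}$ (which hinges on identifying the normalized constant eigenfunction in the weighted inner product) and justifying the strict bound $\rho < 1$. The latter rests on two facts already available in the excerpt, namely that connectivity forces the simplicity of $\alpha_1 = 1$, and that ergodicity — in particular aperiodicity, guaranteed for $cr(\G) > 2$ — forces $\alpha_n > -1$, so that no eigenvalue of $P_\G$ other than $\alpha_1$ has modulus one.
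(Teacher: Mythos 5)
Your proof is correct and is precisely the argument the paper relies on: the paper gives no explicit proof, merely recalling the statement from Grigoryan's lecture notes, and the intended argument is exactly your spectral decomposition of $f$ in the $\mu$-orthonormal eigenbasis of $P_\G$, with the $k=1$ term identified as the constant $\overline{f}$ and the remaining terms bounded via Parseval by $\rho^{2t}\|f\|^2$. Your justification of $\rho<1$ — simplicity of $\alpha_1=1$ from connectivity together with $\alpha_n>-1$ from the hypothesis $cr(\G)>2$ — correctly supplies the facts the paper records just before stating the theorem.
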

Thus, after $t \ge \frac{1}{1-\rho} \log (1/\epsilon)$ steps $|| P_\G ^t f - \overline{f} ||$ becomes less than $\epsilon ||f|| $. We define the equilibrium transition probability operator $\overline{ P_\G} : l^2(V, \mu) \rightarrow  l^2(V, \mu)$ as
$$\overline{ P_\G} u(i) = \frac{1}{vol(V)} \sum_{j\in V} d_j u(j)  .$$ Thus, $ \overline{ P_\G} f = \overline{ f} $, for all functions $f\in  l^2(V, \mu)$. Using the above theorem we find that 
$P_\G ^t$ converges to $\overline{ P_\G}$ as $t \rightarrow \infty$.

 We also refer our readers to \cite{Lu2011} where a set of Laplacians for hypergraphs have been defined to study high-order random walks on hypergraphs.

\section{Ricci curvature on hypergraphs}

Here we discuss two aspects of Ricci curvature on hypergraphs. 
Let us recall our transition probability operator 
$$P_\G (x,y) = \frac{\sum_{\substack{e  \in E, \\ x,y\in e} } \frac{1}{|e|-1}}{d_x}$$
for a  hypergraph $\G(V,E)$. Clearly $P_\G (x,y)$ is reversible. Let us define the Laplace operator 
$$\Delta := - \Delta_\G $$
which is also acting on $l^2(V, \mu)$. Thus $\Delta =P_\G- \mathbf{I}$ and for any $f\in l^2(V, \mu)$ we have $(\Delta f)(i) =  \frac{1}{d_i}\sum_{j, i\sim j} \sum_{\substack{e  \in E, \\ i,j\in e} } \frac{1}{|e|-1}(f(j)-f(i))$. Now we discuss two aspects of Ricci curvature in the sense of Bakry and Emery \cite{Bakry1985} and Ollivier \cite{Ollivier2009}. For graphs, readers may  also see \cite{ Bauer2017, Jost2014, Lin2010}.

\subsection{Ricci curvature on hypergraphs in the sense of Bakry and Emery}
Let us define a bilinear operator
$$\Gamma : l^2(V, \mu) \times l^2(V, \mu) \rightarrow l^2(V, \mu)$$
as
$$\Gamma (f_1, f_2)(i) := \frac{1}{2} \{\Delta (f_1(i) f_2(i)) -f_1(i)\Delta f_2(i) -  f_2(i)\Delta f_1(i)  \}. $$ 
Then the Ricci curvature operator, $\Gamma_2$, is defined as
$$\Gamma_2 (f_1, f_2)(i) := \frac{1}{2} \{\Delta \Gamma  (f_1, f_2)(i) - \Gamma  (f_1, \Delta f_2)(i) - \Gamma ( f_2, \Delta f_1)(i)  \}. $$ 
Now, for our hypergraph $\G$ we have
$$\Gamma (f, f)(i) = \frac{1}{2} \frac{1}{d_i} \sum_{j, j\sim i}  \sum_{\substack{e  \in E, \\ i,j\in e} } \frac{1}{|e|-1} (f(i) - f(j))^2.$$
Then, from the proof of Theorem 1.2 in \cite{Lin2010}, we can express our Ricci curvature operator on a hypergraph $\G$ as
\begin{align*}  
    \Gamma_2 (f, f)(i)     = &  \frac{1}{4} \frac{1}{d_i}  \sum_{j, j\sim i} \frac{ 1}{d_j } \sum_{\substack{e  \in E, \\ i,j\in e} } \frac{1}{|e|-1}  \sum_{k, k\sim j}\sum_{\substack{e  \in E, \\ j,k\in e} } \frac{1}{|e|-1} (f(i) - 2f(j) +f(k) )^2 \\
     & { }  -  \frac{1}{2} \frac{1}{d_i}  \sum_{j, j\sim i}  \sum_{\substack{e  \in E, \\ i,j\in e} } \frac{1}{|e|-1} (f(i) - f(j))^2   + \frac{1}{2} \bigg(\frac{1}{d_i}  \sum_{j, j\sim i}   \sum_{\substack{e  \in E, \\ i,j\in e} } \frac{1}{|e|-1}(f(i) - f(j))       \bigg )^2   .
\end{align*}
We have omitted the variable $i$ in the above equation. For simplicity, we do the same for the following equations which hold for all $i\in V$.

Let $\mathbf{m}$ and $ \mathbf{K}$ be the \textit{dimension} and the lower bound of the Ricci  curvature, respectively, of Laplacian operator $\Delta$. Then we say that $\Delta$ satisfies \textit{curvature-dimension type inequality} $CD (\mathbf{m}, \mathbf{K})$ for some $\mathbf{m}>1 $ if
$$\Gamma_2(f,f) \ge \frac{1}{\mathbf{m}} (\Delta f)^2 + \mathbf{K} \Gamma(f,f) \text{ for all } f\in l^2(V, \mu).$$  
If $\Gamma_2 \ge \mathbf{K} \Gamma $, then $\Delta$ satisfies $CD (\infty, \mathbf{K})$. Any connected   (finite) hypergraph $\G(V,E)$ satisfies  $CD (2, \frac{1}{d_*}-1)$, where $d_* = \sup_{i\in V} \sup_{j\sim i} (d_i)/ (\sum_{\substack{e  \in E, \\ i,j\in e} } \frac{1}{|e|-1}) $.

Now, Theorem  2.1 in \cite{Bauer2017} can be stated in the context of hypergraphs as follows.
\begin{thm}
If $\Delta$ satisfies a curvature-dimension type inequality $CD (\mathbf{m}, \mathbf{K})$ with $\mathbf{m}>1 $ and  $\mathbf{K}>0 $ then
$$\lambda_2(\Delta_\G) \ge \frac{ \mathbf{m} \mathbf{K}   }{ \mathbf{m} -1  }.$$
\end{thm}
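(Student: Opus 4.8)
The plan is to run the standard Bakry--Émery/Lichnerowicz argument: take $f$ to be an eigenfunction of $\Delta_\G$ associated with the eigenvalue $\lambda_2(\Delta_\G)$ and integrate the curvature-dimension inequality against the measure $\mu$. Since $\Delta = -\Delta_\G$, such an $f$ satisfies $\Delta f = -\lambda_2(\Delta_\G)f$. Summing the pointwise inequality $\Gamma_2(f,f) \ge \frac{1}{\mathbf{m}}(\Delta f)^2 + \mathbf{K}\,\Gamma(f,f)$ over $V$ against $\mu$ collapses the whole estimate to three inner products, each of which I can evaluate in closed form.

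The first step is to record the integration-by-parts identities. Because $\Delta_\G$ (hence $\Delta$) is self-adjoint with respect to $\langle\cdot,\cdot\rangle_\mu$ and annihilates constants (for a connected hypergraph the constant function spans the kernel), one has $\sum_{i}\mu(i)(\Delta g)(i) = \langle \Delta g, \mathbf{1}\rangle_\mu = \langle g, \Delta \mathbf{1}\rangle_\mu = 0$ for every $g \in l^2(V,\mu)$. Feeding $g = f^2$ into the definition of $\Gamma$ gives $\langle \Gamma(f,f), \mathbf{1}\rangle_\mu = -\langle f, \Delta f\rangle_\mu$. Feeding $g = \Gamma(f,f)$ into the definition of $\Gamma_2$ kills the $\Delta\Gamma(f,f)$ term and leaves $\langle \Gamma_2(f,f),\mathbf{1}\rangle_\mu = -\langle \Gamma(f,\Delta f),\mathbf{1}\rangle_\mu$; expanding $\Gamma(f,\Delta f)$ by its definition and using self-adjointness once more (so that $\langle f, \Delta^2 f\rangle_\mu = \langle \Delta f, \Delta f\rangle_\mu$) yields $\langle \Gamma_2(f,f),\mathbf{1}\rangle_\mu = \langle\Delta f,\Delta f\rangle_\mu$.

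With these three identities in hand, integrating the $CD(\mathbf{m},\mathbf{K})$ inequality produces
$$\langle\Delta f,\Delta f\rangle_\mu \ge \frac{1}{\mathbf{m}}\langle\Delta f,\Delta f\rangle_\mu + \mathbf{K}\big(-\langle f, \Delta f\rangle_\mu\big).$$
Substituting $\Delta f = -\lambda_2(\Delta_\G) f$, so that $\langle\Delta f,\Delta f\rangle_\mu = \lambda_2(\Delta_\G)^2\langle f,f\rangle_\mu$ and $-\langle f,\Delta f\rangle_\mu = \lambda_2(\Delta_\G)\langle f,f\rangle_\mu$, turns this into
$$\lambda_2(\Delta_\G)^2\langle f,f\rangle_\mu \ge \frac{1}{\mathbf{m}}\lambda_2(\Delta_\G)^2\langle f,f\rangle_\mu + \mathbf{K}\,\lambda_2(\Delta_\G)\langle f,f\rangle_\mu.$$
Dividing by $\lambda_2(\Delta_\G)\langle f,f\rangle_\mu > 0$ and rearranging gives $\lambda_2(\Delta_\G)\big(1 - \tfrac{1}{\mathbf{m}}\big) \ge \mathbf{K}$, which is exactly the claimed bound $\lambda_2(\Delta_\G) \ge \frac{\mathbf{m}\mathbf{K}}{\mathbf{m}-1}$.

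I expect the only genuine obstacle to be the bookkeeping in the $\Gamma_2$ computation: one must expand $\Gamma(f,\Delta f)$ correctly and invoke self-adjointness of $\Delta$ at precisely the right places so that the two copies of $\langle\Delta f,\Delta f\rangle_\mu$ combine with the correct sign to give $+\langle\Delta f,\Delta f\rangle_\mu$ after the outer factor of $-1$. Everything else is forced once the vanishing identity $\langle \Delta g, \mathbf{1}\rangle_\mu = 0$ is in place, and that rests only on the self-adjointness of $\Delta_\G$ already verified in the excerpt together with $\Delta \mathbf{1} = 0$.
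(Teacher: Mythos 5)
Your argument is correct and is precisely the standard integrated Lichnerowicz/Bakry--\'Emery computation: the paper itself supplies no proof for this theorem, merely transplanting Theorem~2.1 of Bauer--Chung--Lin--Liu to the hypergraph setting, and the proof of that cited result is exactly your chain of identities $\langle \Gamma(f,f),\mathbf{1}\rangle_\mu = -\langle f,\Delta f\rangle_\mu$ and $\langle \Gamma_2(f,f),\mathbf{1}\rangle_\mu = \langle \Delta f,\Delta f\rangle_\mu$ applied to an eigenfunction. The only point worth making explicit is that dividing by $\lambda_2(\Delta_\G)\langle f,f\rangle_\mu$ uses $\lambda_2(\Delta_\G)>0$, i.e.\ connectedness of $\G$, which is the standing assumption of this section.
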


\subsection{Ricci curvature on hypergraphs in the sense of Ollivier}
The Ollivier's Ricci curvature (also known as  Ricci-Wasserstein curvature) is introduced on a separable and complete metric space $(X,d)$, where each point $x\in X$ has a probability measure $p_x(\cdot)$. Let us denote the structure by $(X,d,p)$. 
Let $\mathcal{C} (\mu, \nu)$ be the set of probability measures on $X \times X$ projecting to $\mu$ and $\nu$. Now $\xi \in \mathcal{C} (\mu, \nu)$ satisfies 
$$ \xi (A \times X) = \mu (A),  \xi (X \times B) = \nu (B), \forall A,B \subset X.$$
Then the \textit{transportation distance (or Wasserstein distance)} between two probability measures $\mu$,  $\nu$ on a metric space $(X,d)$ is defined as 
$$\mathcal{T}_1 (\mu ,\nu)  :=  \inf_{\xi \in \mathcal{C} (\mu, \nu)}  \int_{ X \times X} d(x,y) d\xi(x,y).$$
Now on $(X,d,p)$, the Ricci curvature of $(X,d,p)$ for distinct $x,y\in X$ is defined as 
$$ \kappa (x,y):= 1 - \frac{\mathcal{T}_1 (p_x ,p_y)}{d(x,y)}.$$
For a connected   hypergraph $\G(V,E)$ we take $d(x,y) = 1$ for two distinct adjacent vertices $x,y$   and we consider the probability measure
\begin{equation*} 
p_x(y) = \left\{ \begin{array}{ll}
 \frac{1}{d_x}  \sum_{\substack{e  \in E, \\ x,y\in e} } \frac{1}{|e|-1}, & \textrm{if $y\sim x$,}\\
 0, & \textrm{otherwise,}
  \end{array} \right.
\end{equation*}
for all $x\in V$.
Now  Theorem 3.1 in  \cite{Bauer2017} also holds for a connected  hypergraph as follows
\begin{thm}
Let $\G$ be a connected hypergraph on $n$ vertices. Then $\kappa \le \lambda_2(\Delta_\G) \le \lambda_n(\Delta_\G) \le 2-\kappa$, where the Ollivier's Ricci curvature of $\G$ is at least  $\kappa$.
\end{thm}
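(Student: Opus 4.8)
The plan is to derive both inequalities at once from a single contraction estimate: the hypothesis that the Ollivier--Ricci curvature is at least $\kappa$ forces the transition operator $P_\G$ to contract the edge-gradient seminorm by the factor $1-\kappa$, and consequently every nonzero eigenvalue of $\Delta_\G = \mathbf{I} - P_\G$ is trapped in the interval $[\kappa,\,2-\kappa]$.

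First I would set up the seminorm $\|\nabla g\|_\infty := \max_{u\sim v}|g(u)-g(v)|$ (recall $d(u,v)=1$ on edges) and note that if $\|\nabla g\|_\infty = C$ then $g/C$ is $1$-Lipschitz for the graph metric $d$, since the differences telescope along a shortest path. The central lemma is that $P_\G$ contracts this seminorm. For adjacent $x,y$, Kantorovich--Rubinstein duality together with the fact that $p_x = P_\G(x,\cdot)$ and $p_y = P_\G(y,\cdot)$ are precisely the measures in the curvature definition gives
$$|(P_\G g)(x) - (P_\G g)(y)| = \left| \int g\, dp_x - \int g\, dp_y \right| \le \|\nabla g\|_\infty \, \mathcal{T}_1(p_x,p_y).$$
By hypothesis $\mathcal{T}_1(p_x,p_y) = (1-\kappa(x,y))\,d(x,y) \le 1-\kappa$, so $\|\nabla P_\G g\|_\infty \le (1-\kappa)\,\|\nabla g\|_\infty$ for every $g\in l^2(V,\mu)$.

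Next I would feed an eigenfunction into this estimate. Since $\G$ is connected, $P_\G$ is self-adjoint in $l^2(V,\mu)$, the eigenvalue $0$ of $\Delta_\G$ is simple with constant eigenfunction, and every other eigenvalue $\lambda = \lambda_k(\Delta_\G)$ ($k\ge 2$) has a nonconstant real eigenfunction $f$ with $P_\G f = (1-\lambda)f$. Applying the lemma to $f$ yields $|1-\lambda|\,\|\nabla f\|_\infty = \|\nabla P_\G f\|_\infty \le (1-\kappa)\,\|\nabla f\|_\infty$. As $f$ is nonconstant, $\|\nabla f\|_\infty>0$, and dividing gives $|1-\lambda|\le 1-\kappa$, that is $\kappa \le \lambda \le 2-\kappa$. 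Specializing $\lambda$ to $\lambda_2(\Delta_\G)$ and to $\lambda_n(\Delta_\G)$ produces the two stated bounds, and $\lambda_2 \le \lambda_n$ is immediate from the ordering of the spectrum.

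The routine parts are the duality step and the telescoping Lipschitz bound; the one point needing care is that the contraction lemma holds for \emph{arbitrary} $g$ (via the normalization by $\|\nabla g\|_\infty$) and across \emph{all} adjacent pairs, which is where the hypergraph-specific measure $p_x$ enters. I expect the main obstacle to be confirming the curvature bound $\kappa(x,y)\ge\kappa$ precisely at the edge $(x,y)$ realizing $\|\nabla P_\G f\|_\infty$; this is automatic here because the bound is assumed uniformly over all edges, so no delicate case analysis is required.
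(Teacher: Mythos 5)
Your proof is correct, and it is essentially the argument the paper relies on: the paper gives no proof of its own but simply transfers Theorem 3.1 of Bauer--Chung--Lin--Liu to the hypergraph setting, and that theorem's proof is exactly your Kantorovich--Rubinstein duality plus Lipschitz-seminorm contraction $\|\nabla P_\G g\|_\infty \le (1-\kappa)\|\nabla g\|_\infty$ applied to a nonconstant eigenfunction. The only hypergraph-specific ingredient is the identification $p_x(\cdot)=P_\G(x,\cdot)$ with the paper's transition kernel, which you use correctly.
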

As in \cite{Jost2014}, one can also introduce a scalar curvature (suggested in Problem Q in \cite{Ollivier2009}) for a vertex $x$ in $\G$ as
$$
 \kappa (x):= \frac{1}{d_x} \sum_{y, x\sim y} \kappa (x,y).
$$

\section*{Acknowledgements}
The author is sincerely thankful to Richard Buraldi for inspiring him to introduce  connectivity matrix on a hypergraph. The author is very grateful to  Saugata Bandyopadhyay and Asok Nanda for scrutinizing the manuscript. 
The author is also thankful to 
J\"urgen Jost, Shipping Liu,
Satyaki Mazumder, Shibananda Biswas, Sushil Gorai, Shirshendu Chowdhury, Swarnendu Datta and Amitesh Sarkar for fruitful discussions.

\end{document}